\documentclass[12pt]{amsart}

\usepackage[top=1.6cm,bottom=2cm,left=1.1in,right=1.1in]{geometry}
\usepackage[T1]{fontenc}
\usepackage[utf8]{inputenc}
\usepackage{lmodern}
\usepackage{amsmath,amssymb,amsthm,mathtools}
\usepackage{mathrsfs}
\usepackage{enumitem}
\usepackage{microtype}
\usepackage[colorlinks=true,linkcolor=blue,citecolor=blue,urlcolor=blue]{hyperref}

\numberwithin{equation}{section}

\newtheorem{theorem}{Theorem}[section]
\newtheorem{lemma}[theorem]{Lemma}
\newtheorem{corollary}[theorem]{Corollary}
\newtheorem{proposition}[theorem]{Proposition}

\theoremstyle{remark}

\theoremstyle{definition}
\newtheorem{definition}[theorem]{Definition}
\newtheorem{setup}[theorem]{Setup}

\newcommand{\ii}{\sqrt{-1}}

\newcommand{\BC}{\mathrm{BC}}

\newcommand{\MK}{\mathcal{MK}}
\newcommand{\MN}{\mathcal{MN}}

\newcommand{\Rea}{\operatorname{Re}}
\newcommand{\Ima}{\operatorname{Im}}
\newcommand{\arccot}{\operatorname{arccot}}

\newcommand{\R}{\mathbb{R}}

\newcommand{\Span}{\operatorname{Span}}

\newcommand{\tr}{\operatorname{tr}}

\newcommand{\Cone}{\operatorname{Cone}}
\newcommand{\Dest}{\operatorname{Dest}}

\title[Boundary Cases for the supercritical LYZ equation]
{Divisorial Rigidity and Regularity of  Boundary Cases for the supercritical LYZ equation
}

\author{Jixiang Fu}
\address{Shanghai Center for Mathematical Sciences,
Fudan University,
Shanghai 200433, China}
\email{majxfu@fudan.edu.cn}

\author{Shing-Tung Yau}
\address{Yau Mathematical Sciences Center, Tsinghua University,
Beijing, 100084, China}
\email{syau@tsinghua.edu.cn}
\author{Dekai Zhang}
\address{School of Mathematical Sciences, Key Laboratory of Mathematics
and Engineering Applications (Ministry of Education), Shanghai Key Laboratory
of PMMP, East China Normal University, Shanghai 200241, China}
\email{dkzhang@math.ecnu.edu.cn}

\author{Ziyi Zhang}
\address{School of Mathematical Sciences, Fudan University,
Shanghai 200433, China}
\email{21210180101@m.fudan.edu.cn}

\date{}

\begin{document}

\begin{abstract}
We study the supercritical LYZ equation on compact K\"ahler manifolds
at the boundary of stability. Under numerical semistability, we prove
a quantitative positivity estimate on the modified nef cone. It implies
that the destabilizing prime divisors form a finite exceptional family. Assuming the existence of a smooth
semisubsolution, we prove that the associated intersection form is
negative definite on the span of classes of these divisors.
Using this rigidity, we then construct a logarithmic singular subsolution along their union
and obtain a bounded Bedford--Taylor solution which is smooth on its
complement. 
\end{abstract}

\maketitle

\section{Introduction}
\label{sec:introduction}

Let $(M,\omega)$ be a compact K\"ahler manifold of complex dimension
$n\geq2$, and let $\chi$ be a smooth closed real $(1,1)$-form.
For $u\in C^\infty(M,\mathbb R)$, write
$\chi_u=\chi+\ii\partial\bar\partial u$.
The Leung--Yau--Zaslow (LYZ) equation, also called the deformed
Hermitian--Yang--Mills equation, is
\begin{equation}\label{eq:LYZ}
 \Rea(\chi_u+\ii\omega)^n
 =\cot\theta_0\,\Ima(\chi_u+\ii\omega)^n,
\end{equation}
where the constant $\theta_0$ satisfies
\begin{equation}\label{eq:compatibility}
 \int_M\bigl(\Rea(\chi+\ii\omega)^n
 -\cot\theta_0\,\Ima(\chi+\ii\omega)^n\bigr)=0.
\end{equation}
The equation arises from Mirror symmetry
by Leung--Yau--Zaslow~\cite{LeungYauZaslow} and in the study
of supersymmetric D-branes in string theory by Mari{\~n}o--Minasian--Moore--Strominger \cite{MMMS2000}. Its analytic study on compact
K\"ahler manifolds was initiated by Jacob--Yau~\cite{JacobYau}.

Throughout the paper, $\theta_0\in(0,\pi)$ and we consider the
supercritical case
\begin{equation}\label{eq:phase}
 Q_\omega(\chi_u):=\sum_{i=1}^n\arccot\lambda_i(\chi_u)=\theta_0,
\end{equation}
where $\lambda_i(\chi_u)$ are the eigenvalues of $\chi_u$ with respect
to $\omega$ and $\arccot\lambda_i\in(0,\pi)$.
If $0<Q_\omega(\chi_u)<\pi$,
\eqref{eq:LYZ} and \eqref{eq:phase} are equivalent.
For a real $(1,1)$-form $\eta$, set
$P_\omega(\eta)=\max_{1\leq i\leq n}
\sum_{j\neq i}\arccot\lambda_j(\eta)$.
Collins--Jacob--Yau~\cite{CollinsJacobYau} obtained a priori estimates
under the existence of a $\mathcal C$-subsolution in the sense of
Sz\'ekelyhidi~\cite{Sze2018}, which in this setting is a smooth
function $\underline u$ satisfying
\begin{equation}\label{eq:C-subsolution}
 P_\omega(\chi_{\underline u})<\theta_0
 \qquad\text{on }M.
\end{equation}
They proved smooth solvability under the additional condition
$Q_\omega(\chi_{\underline u})<\pi$.
This additional condition was removed in dimension three by
Pingali~\cite{PingaliThreefolds}, in dimension four by
Lin~\cite{LinDHYM}, and in arbitrary dimension by
Lin~\cite[Theorem~1.5]{LinSolvability}.

The subsolution condition is closely related to numerical
inequalities on analytic subvarieties.
Chen~\cite[Theorem~1.7]{Chen} characterized smooth solvability by
uniform numerical stability along test families.
Chu--Lee--Takahashi~\cite[Theorem~1.3]{ChuLeeTakahashi} removed the
uniform constant and obtained a Nakai--Moishezon type criterion.

Boundary cases of the LYZ equation have been studied by both elliptic
and parabolic methods. On K\"ahler surfaces, Takahashi~\cite{TakahashiBoundary},
Chan--Jacob~\cite{ChanJacobSingularity}, and
Fu--Yau--Zhang~\cite{FuYauZhangFlow} studied the behavior of the
corresponding flows near the boundary of stability.
Murakami~\cite{MurakamiSurface,MurakamiBoundary} obtained weak
convergence results on surfaces and in higher dimensions.
Related singularity formation and canonical singular solutions were
studied by Mete~\cite{MeteCotangent} and
Datar--Mete--Song~\cite{DatarMeteSong}.

For $0\leq p\leq n$ and a fixed $\theta\in(0,\pi)$, set
\begin{equation}\label{eq:Gp}
 G_{p,\theta}(\chi)
 :=\Rea(\chi+\ii\omega)^p-\cot\theta\,\Ima(\chi+\ii\omega)^p.
\end{equation}
Thus $G_{0,\theta}=1$. We write $G_p$ when the phase is fixed.
For our first result, $\theta$ need not equal $\theta_0$.
We use the following numerical semistability condition, including
the mixed intersection inequalities.

\begin{definition}\label{def:LYZ-semistable}
We say that $(\chi,\omega,\theta)$ satisfies the
\emph{semistability condition} if, for every K\"ahler class $\gamma$,
every irreducible analytic subvariety $V\subset M$ of dimension
$1\leq p\leq n$, and every $1\leq k\leq p$,
\begin{equation}\label{eq:LYZ-subvariety-semistable}
 \int_VG_{k,\theta}(\chi)\wedge\gamma^{p-k}\geq0.
\end{equation}
\end{definition}

The geometry of subvarieties on which the numerical inequalities
fail is relevant to the degeneration of solutions.
Khalid--Sj\"ostr\"om Dyrefelt~\cite{KDS24} studied destabilizing curves
for the LYZ and $Z$-critical equations on K\"ahler surfaces, using
Zariski decomposition to obtain finiteness and negative intersection
properties. They subsequently obtained finiteness and rigidity
results for generalized Monge--Amp\`ere equations under additional
positivity assumptions~\cite{KDS26}.
In the semistability case considered here, we use the following terminology introduced by~\cite{KDS26}.

\begin{definition}\label{def:destabilizing}
Assume \eqref{eq:LYZ-subvariety-semistable}.
An irreducible analytic subvariety $V\subsetneq M$ of dimension
$1\leq p\leq n-1$ is called a \emph{destabilizing subvariety} if
\begin{equation}\label{eq:destabilizing-subvariety}
 \int_VG_{p,\theta}(\chi)=0.
\end{equation}
In particular, a prime divisor $D$ is called a
\emph{destabilizing prime divisor} if
\begin{equation}\label{eq:destabilizing-divisor}
 \int_DG_{n-1,\theta}(\chi)=0.
\end{equation}
\end{definition}

Our first result gives quantitative positivity on the modified nef
cone and proves that destabilizing prime divisors form an exceptional family in the sense of
Boucksom~\cite{Boucksom}.

\begin{theorem}\label{thm:main-modified-nef}
Assume \eqref{eq:LYZ-subvariety-semistable}.
There exists $c=c(M,\chi,\omega,\theta)>0$ such that, for every
modified nef class $\beta$,
\begin{equation}\label{eq:main-modified-nef}
 \int_MG_{n-1,\theta}(\chi)\wedge\beta
 \geq c\int_M\omega^{n-1}\wedge\beta.
\end{equation}
Moreover, the set
\begin{equation}\label{eq:Sc-intro}
 S_c:=\left\{D\text{ prime divisor}\,\middle|\,
 \int_DG_{n-1,\theta}(\chi)<c\int_D\omega^{n-1}\right\}
\end{equation}
is an exceptional family. In particular, $S_c$ is finite and
$\#S_c\leq\rho(M)$, where $\rho(M)$ is the Picard number of $M$.
\end{theorem}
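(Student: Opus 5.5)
The plan is to obtain \eqref{eq:main-modified-nef} from the semistability hypothesis through a uniform positivity statement on a compact slice of the modified nef cone, and then to read off the exceptional-family property from Boucksom's characterization.

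\emph{Step 1: positivity on nef classes.} Write $\mathcal N$ for the nef cone and $\mathcal{MN}$ for the modified nef cone. I first want $\int_M G_{n-1,\theta}(\chi)\wedge\beta>0$ for every nonzero nef class $\beta$. Taking $V=M$, $k=n-1$ and $\gamma$ Kähler in \eqref{eq:LYZ-subvariety-semistable} gives
\[
\int_M G_{n-1,\theta}(\chi)\wedge\gamma\ge 0,
\]
and by continuity this extends to all nef $\gamma$. Strictness needs more work. I would use the known algebraic identity expressing $G_{n-1,\theta}$ (and its mixed versions with $\gamma$) via the derivative of $G_{n,\theta}$ in the $\chi$ direction. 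I would combine it with the Chu--Lee--Takahashi/Chen Nakai-type criterion applied to the perturbation $\chi+\epsilon\omega$ with a slightly shifted phase: semistability of $(\chi,\omega,\theta)$ should give stability, hence a $\mathcal C$-subsolution, for $(\chi+\epsilon\omega,\omega,\theta')$. I expect this step to be the main obstacle, because the phase has to be adjusted compatibly. The fallback is to argue directly: a $\mathcal C$-subsolution gives pointwise positivity $G_{n-1,\theta}(\chi_{\underline u})>0$ as an $(n-1,n-1)$-form, hence $G_{n-1,\theta}(\chi_{\underline u})\ge c\,\omega^{n-1}$. Then let $\epsilon\to0$, keeping track of the loss. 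If the constant degenerates in this limit, I would instead argue by contradiction, taking a sequence of normalized classes with ratio tending to $0$.

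\emph{Step 2: extension to modified nef classes.} Use Boucksom's divisorial Zariski decomposition and, more directly, the duality that $\mathcal{MN}$ is dual to the pseudoeffective cone generated by prime divisors and the psef cone. A modified nef class $\beta$ pairs nonnegatively with $[D]$ restricted appropriately; concretely, $\beta$ is a limit of classes $\pi_*\tilde\beta$ with $\tilde\beta$ nef on a modification $\pi\colon\tilde M\to M$. I would pull back $\chi,\omega$, use $\int_M G\wedge\pi_*\tilde\beta=\int_{\tilde M}\pi^*G\wedge\tilde\beta$, and apply the semistability inequality on $M$ to the images of subvarieties. Here the mixed inequalities with arbitrary $\gamma$ and all $k$ are exactly what survive pullback. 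This yields $\ge0$ on $\mathcal{MN}$. Strict positivity on the compact base $\{\beta\in\mathcal{MN}:\int\omega^{n-1}\wedge\beta=1\}$ then gives the uniform $c>0$ by compactness, provided the zero-pairing case is excluded. I would rule that out by showing that a zero-pairing $\beta$ must be a multiple of a divisor class, which contradicts modified nefness once its self-pairing is examined.

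\emph{Step 3: the set $S_c$.} Take $D_1,\dots,D_m\in S_c$ and an effective combination $E=\sum a_iD_i\neq0$. Suppose $[E]$ were not exceptional. Its divisorial Zariski decomposition $[E]=Z(E)+N(E)$ would then have nonzero modified nef part $Z(E)$, and $N(E)$ is supported on the $D_i$. Since every $D_i$ satisfies the strict inequality defining $S_c$, linearity gives
\[
\int G\wedge N(E)\le c\int\omega^{n-1}\wedge N(E)
\]
and similarly for $E$, with strict inequality. Combined with \eqref{eq:main-modified-nef} applied to $Z(E)=[E]-N(E)$, this should produce a contradiction, after an adjustment of $c$ (replacing $c$ by a slightly smaller value if needed). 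Hence every such $E$ is exceptional, so $S_c$ is an exceptional family. Boucksom's theorem then gives linear independence of the classes, and therefore $\#S_c\le\rho(M)$.
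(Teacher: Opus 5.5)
There is a genuine gap: the existence of a uniform $c>0$, which is the heart of \eqref{eq:main-modified-nef}, is never established. What your outline actually proves is only the weak inequality $\int_MG_{n-1,\theta}(\chi)\wedge\beta\ge0$ on nef classes.

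\emph{Why Steps 1--2 fail.} A $\mathcal C$-subsolution $\chi'_\epsilon\in[\chi+\epsilon\omega]$ does give $G_{n-1,\theta}(\chi'_\epsilon)\ge c_\epsilon\,\omega^{n-1}$ pointwise. But nothing prevents $c_\epsilon\to0$: in general $[\chi]$ itself carries no strict subsolution, for instance when destabilizing divisors exist. So ``keeping track of the loss'' has no mechanism behind it.

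Your compactness fallback only reduces the claim to excluding a nonzero modified nef $\beta$ with $\int_MG_{n-1,\theta}(\chi)\wedge\beta=0$, which is the theorem again. Your proposed exclusion is that such $\beta$ must be a multiple of a divisor class, contradicting modified nefness via its self-pairing. This is unsupported, and even if granted it would not close the argument. Showing that a destabilizing prime divisor cannot have modified nef class is exactly what is being proved, and prime divisor classes can certainly be nef (e.g.\ fibres of a fibration).

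For $n=2$ you could finish with the Hodge index theorem, since $[\chi-\cot\theta\,\omega]$ is then nef with positive square. For $n\ge3$, nothing in your outline produces strict positivity.

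In Step 2, the pairing $\int_{\widetilde M}\mu^*G_{n-1,\theta}(\chi)\wedge\widetilde\beta$ is controlled neither by the semistability inequalities on $M$ (since $\widetilde\beta$ is not a pullback) nor by those on $\widetilde M$ (since $\mu^*\omega$ is not K\"ahler).

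\emph{The missing idea.} The paper tests against \emph{solutions} of a twisted equation, with a background metric adapted to $\beta$, and concludes by an integral rather than a pointwise inequality. For K\"ahler $\beta$ the argument runs as follows.
\begin{enumerate}
\item Semistability makes $[\chi+t\omega]$ uniformly stable for $t>0$.
\item Yau's theorem gives $\widehat\omega\in[\omega]$ with $\widehat\omega^n$ proportional to $\omega^{n-1}\wedge\beta$.
\item Proposition~\ref{prop:twisted-existence} solves $G_{n,\theta}(\chi_{t,u},\widehat\omega)=a_t\widehat\omega^n$ with the constant $a_t=A_t/V_\omega>0$.
\item The pointwise identity $(\lambda_i-\cot\theta)S_i=a_t+\csc^2\theta\,T_i$ and the cofactor bound $T_i\ge\kappa_{n,\theta}$ give $G_{n-1,\theta}\wedge\beta\ge\frac1n(a_t+\csc^2\theta\,\kappa_{n,\theta})\tr_{\xi_t}\beta\,\widehat\omega^n$, where $\xi_t=\chi_{t,u}-\cot\theta\,\widehat\omega>0$.
\item Cauchy--Schwarz against $\int_M\tr_\omega\xi_t\,\omega^n=B_t$ yields the constant $(A_t+\csc^2\theta\,\kappa_{n,\theta}V_\omega)/B_t$.
\end{enumerate}
This constant stays bounded below as $t\to0$ even when $A=0$, precisely because of the $\kappa_{n,\theta}$ term.

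The modified nef case then follows from Boucksom's characterization. One applies the K\"ahler estimate on $\widetilde M$ to the pair $\mu^*\chi'_t+\varepsilon_1\alpha$, $\mu^*\omega+\varepsilon_2\alpha$, where $\chi'_t\in[\chi+t\omega]$ is a strict subsolution, so $G_{p,\theta}(\chi'_t)>0$ pointwise. Then let $\varepsilon_2,\varepsilon_1,t\to0$.

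\emph{Step 3.} This needs no Zariski decomposition, and as written it subtracts two inequalities pointing the same way, which is invalid. Just observe that any nonzero nonnegative combination of classes of divisors in $S_c$ violates \eqref{eq:main-modified-nef} by linearity, hence is not modified nef. Do not shrink $c$ there: that shrinks $S_c$ and proves less.
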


We first prove \eqref{eq:main-modified-nef} for K\"ahler classes.
Yau's theorem~\cite{Yau1978} allows us to choose a background volume
form adapted to the testing class. We then solve a twisted LYZ
equation and combine a pointwise inequality for its coefficients
with the Cauchy--Schwarz inequality.
Boucksom's description of modified K\"ahler classes as pushforwards
of K\"ahler classes on modifications extends the estimate to the
modified nef cone. It follows that no nonzero positive combination
of classes of divisors in $S_c$ is modified nef.
Boucksom's linear independence theorem
\cite[Proposition~3.11(iii)]{Boucksom} gives the finiteness assertion.

As a consequence, there exists
$\delta=\delta(M,\chi,\omega,\theta)>0$ such that, for every prime
divisor $D$,
\begin{equation}\label{eq:uniform-divisor-gap-intro}
 \int_DG_{n-1,\theta}(\chi)>0
 \quad\Longrightarrow\quad
 \int_DG_{n-1,\theta}(\chi)\geq\delta\int_D\omega^{n-1}.
\end{equation}

For the remaining results, we take $\theta=\theta_0$, so that
$\int_MG_{n,\theta}(\chi)=0$.
We say that $\chi$ is a \emph{smooth semisubsolution} if
\begin{equation}\label{eq:semisubsolution}
 P_\omega(\chi)
 =\max_{1\leq i\leq n}\sum_{j\neq i}\arccot\lambda_j(\chi)
 \leq\theta\qquad\text{on }M.
\end{equation}
If the class $[\chi]$ contains such a representative, we replace
$\chi$ by that representative.
This condition and the compatibility condition imply
\eqref{eq:LYZ-subvariety-semistable}.
Moreover, $G_p(\chi)>0$ for $1\leq p\leq n-2$, so only divisors can
be destabilizing.

We next study the intersection form on the destabilizing divisor
classes. On K\"ahler surfaces, the corresponding curves have negative
self-intersection; see Khalid--Sj\"ostr\"om Dyrefelt~\cite{KDS24}.
For the $2$-Hessian equation, divisorial perturbations in the boundary
case were studied by Fu--Zhang--Zhang~\cite[Theorem~1.6 and
Section~6]{FZZ26}. For the $J$-equation,
Liu~\cite[Proposition~25]{Liu2026Boundary} proved negative definiteness
of the relevant intersection form under the smooth boundary cone
condition and an additional $J$-big assumption.
Fu--Zhang~\cite{FuZhangJBoundary} established a quantitative estimate
on the modified nef cone under numerical $J$-semistability and hence removed
this additional assumption. In the present setting, we consider the
intersection form determined by $G_{n-2,\theta}(\chi)$.

\begin{theorem}\label{strgenthen}
Assume \eqref{eq:semisubsolution}, and let $D_1,\ldots,D_N$ be all
the destabilizing prime divisors. Then the quadratic form
\[
 \gamma\longmapsto
 \int_MG_{n-2,\theta}(\chi)\wedge\gamma^2,
 \qquad \gamma\in H^{1,1}(M,\mathbb R),
\]
is negative definite on
$\Span_{\mathbb R}\{\{D_1\},\ldots,\{D_N\}\}$, where
$\{D_i\}$ denotes the cohomology class of $D_i$.
\end{theorem}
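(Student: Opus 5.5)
The plan is to separate the sign pattern of the intersection matrix from a single genuinely analytic inequality. Set
\[
 A_{ij}:=\int_MG_{n-2,\theta}(\chi)\wedge\{D_i\}\wedge\{D_j\},
\]
so the statement is that the symmetric matrix $(A_{ij})$ is negative definite.

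\textbf{Step 1 (off-diagonal entries).} For $i\neq j$, $D_i\cap D_j$ is a union of subvarieties of dimension $n-2$, and $A_{ij}=\int_{D_i\cap D_j}G_{n-2,\theta}(\chi)$, counted with intersection multiplicities. I will use the pointwise positivity of $G_{n-2}(\chi)$ under \eqref{eq:semisubsolution}, stated before the theorem, in the form that its restriction to any $(n-2)$-dimensional subvariety is a positive volume form. This gives $A_{ij}\geq0$ for $i\neq j$.

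\textbf{Step 2 (reduction to effective classes).} Write $\gamma=\sum a_i\{D_i\}=E-F$ with $E=\sum a_i^+\{D_i\}$ and $F=\sum a_i^-\{D_i\}$; these have no common components. Step 1 gives $\int G_{n-2}\wedge E\wedge F\geq0$, hence
\[
 \int G_{n-2}\wedge\gamma^2\leq\int G_{n-2}\wedge E^2+\int G_{n-2}\wedge F^2 .
\]
So it suffices to prove $\int G_{n-2}\wedge E^2<0$ for every nonzero effective combination $E$. Conversely, by Perron--Frobenius applied to the Metzler matrix $(A_{ij})$, failure of negative definiteness produces a nonzero nonnegative vector $a$ with $\sum_jA_{ij}a_j\geq0$ for all $i$ in $\operatorname{supp}a$. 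Concretely, I would take a top eigenvector $a$, with eigenvalue $\mu\geq0$, on an irreducible block of $(A_{ij})$. Thus I assume for contradiction that $E=\sum a_iD_i\neq0$, with $a_i\ge0$, satisfies $\int_{D_i}G_{n-2}\wedge E\geq0$ for every $D_i\subset\operatorname{supp}E$.

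\textbf{Step 3 (contradiction).} Since each $D_i$ is destabilizing, $\int_MG_{n-1}(\chi)\wedge E=\sum a_i\int_{D_i}G_{n-1}=0$. By Theorem~\ref{thm:main-modified-nef}, $E$ is therefore not modified nef, because $\int_M\omega^{n-1}\wedge E>0$. I then expand the semistability inequalities for the perturbed classes $\chi_t=\chi+tE$, using
\[
 \tfrac{d}{dt}G_{n}(\chi_t)=nG_{n-1}(\chi_t)\wedge E,
 \qquad
 \tfrac{d}{dt}G_{n-1}(\chi_t)=(n-1)G_{n-2}(\chi_t)\wedge E .
\]
The eigen-property gives $\int_{D_i}G_{n-1}(\chi_t)\geq O(t^2)$ on the support, with a strict first-order term when $\mu>0$. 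Every other prime divisor meets $E$ nonnegatively, and it already has a uniform gap by \eqref{eq:uniform-divisor-gap-intro}. Combining these, $E$ would have to be modified nef when tested against the relevant cone, which contradicts Theorem~\ref{thm:main-modified-nef}. An alternative route to the same contradiction is a Hodge-index-type inequality. Pointwise, where $P_\omega(\chi)\leq\theta$, the quadratic form $\alpha\mapsto G_{n-2}(\chi)\wedge\alpha^2$ on real $(1,1)$-forms is Lorentzian, with $G_{n-1}$ playing the role of the positive direction. Integrated over $M$, this would show that $\int G_{n-1}\wedge E=0$ forces $\int G_{n-2}\wedge E^2\leq0$, with equality only if $E$ is numerically proportional to a class with $\int G_{n-1}\wedge\cdot>0$, which is impossible for $E\ne0$.

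\textbf{Main obstacle.} Step 3 is where the difficulty lies, and specifically the case of equality or degenerate first order, $\mu=0$. Here the linear argument gives only $\int G_{n-2}\wedge E^2\leq0$, and strictness needs either the strict version of the pointwise Lorentzian inequality or the quantitative constant $c$ from Theorem~\ref{thm:main-modified-nef}. I would first try the integrated Hodge-index inequality, representing $E$ by a current $[E]$ smoothed as $\sum a_i(\Theta_{h_i}+\ii\partial\bar\partial\log|s_i|^2_{h_i})$. The point to control is the boundary behaviour of $G_{n-2}(\chi)\wedge\Theta\wedge\Theta$ where $P_\omega(\chi)=\theta$ is attained with equality.
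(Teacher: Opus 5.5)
Your Steps 1 and 2 are fine and run parallel to the paper. The off-diagonal entries are nonnegative, and failure of negative definiteness produces a nonzero effective $E=\sum a_iD_i$ with $\int_{D_i}G_{n-2}(\chi)\wedge\{E\}\geq0$ on its support; the paper takes $|x|$ for a kernel vector $x$ where you use Perron--Frobenius.

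\textbf{The gap is Step 3, and it is the whole content of the theorem.}

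\emph{Semidefiniteness is assumed, not proved.} The case $\mu>0$ cannot occur once $Q$ is known to be negative semidefinite on $\ker L$, but you do not prove this. Your suggested route, a pointwise Lorentzian inequality at $\chi$ integrated over $M$, fails for two reasons.
\begin{enumerate}[label=\textup{(\alph*)},leftmargin=*]
\item To integrate, you need a representative $H$ of $\{E\}$ with $G_{n-1}(\chi)\wedge H=0$ pointwise. That means solving an elliptic equation whose coefficient $G_{n-1}(\chi)$ is only semipositive; it degenerates wherever $P_\omega(\chi)=\theta$.
\item The pointwise inequality of Proposition~\ref{prop:smooth-Hodge} is proved using $G_n\geq0$ pointwise, i.e.\ $Q_\omega\leq\theta$. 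The form $\chi$ need not satisfy this, since only $\int_MG_n(\chi)=0$ is known.
\end{enumerate}
The paper instead applies Proposition~\ref{prop:smooth-Hodge} to genuine solutions of the rotated equations (Lemma~\ref{lem:rotation}) and passes to the limit. Any such limit loses strictness.

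\emph{The only case that actually occurs is left open.} After semidefiniteness you are left with exactly $\mu=0$: $Q(\{E\},\{D_j\})=0$ for all $j$ and $L(\{E\})=Q(\{E\},\{E\})=0$. You flag this as the main obstacle but give no argument for it.
\begin{itemize}[leftmargin=*]
\item Smoothing $[E]$ by Poincar\'e--Lelong does not help. The obstruction is the degeneracy of $G_{n-1}(\chi)$ and the loss of strictness in limits, not the singularity of the representative of $\{E\}$.
\item The perturbation $\chi+tE$ does not help either. Its first-order term $(n-1)t\int_{D_i}G_{n-2}(\chi)\wedge\{E\}$ vanishes. Its second-order term involves $\int_{D_i}G_{n-3}(\chi)\wedge\{E\}^2$, whose sign is exactly what is unknown.
\item There is no mechanism by which stability of $[\chi+tE]$ would force $\{E\}$ to be modified nef.
\end{itemize}

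\textbf{The missing idea is to descend to the divisors.}
\begin{enumerate}[leftmargin=*]
\item \emph{Cubic rigidity} (Lemma~\ref{lem:cubic-rigidity}): if $L(e)=Q(e,e)=0$, then $[G_{n-3}(\chi)]\cdot e^3=0$. The paper perturbs $\chi_s=\chi-s\eta+Ks^2\omega$ with $s$ of \emph{both} signs and checks stability by Chen's criterion. Here $Q(e,\{D_i\})=0$ kills the linear term on each $D_i$, $K$ makes the quadratic term positive, and the uniform gap handles all other divisors. Applying Proposition~\ref{prop:smooth-Hodge} to the resulting solutions gives a linear term $-(n-2)s\,[G_{n-3}(\chi)]\cdot e^3$ of either sign, so this cubic number must vanish.
\item \emph{Splitting over the components.} This gives $0=\sum_ia_i\int_{D_i}G_{n-3}(\chi)\wedge\{E\}^2$. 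Since $\int_{D_i}G_{n-2}(\chi)\wedge\{E\}=0$, the Hodge inequality on a resolution of each possibly singular $D_i$ (Theorem~\ref{thm:singular-LYZ-Hodge}) makes every term $\leq0$, hence $=0$.
\item \emph{Equality case.} This needs the uniform pointwise coercivity, the $O(t)$ weighted-energy argument, and the negativity lemma. It yields $g_i^*\{E\}=0$ on each resolution.
\item \emph{Nefness.} The Demailly--P\u{a}un criterion (Lemma~\ref{lem:effective-nefness}) then makes $\{E\}$ nef, hence modified nef. This contradicts the exceptional-family part of Theorem~\ref{thm:main-modified-nef}.
\end{enumerate}
That last step is where Theorem~\ref{thm:main-modified-nef} genuinely enters. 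Without a way to show that $\{E\}$ is (modified) nef, your appeal to it cannot close the argument.
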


When $n=2$, this follows from the Hodge index theorem.
For $n\geq3$, approximation by smooth solutions first gives
negative semidefiniteness on the kernel of
$L(\gamma)=\int_MG_{n-1}(\chi)\wedge\gamma$.
If the divisorial intersection matrix were singular, its
nonnegative off-diagonal entries would give a nonzero effective
$\mathbb R$-divisor $E$ in its kernel.
A perturbation with both signs yields
$[G_{n-3}(\chi)]\cdot\{E\}^3=0$.
The equality case of a Hodge inequality on resolutions of the
components of $E$ then forces the pullback of $\{E\}$ to each
resolution to vanish.
The numerical criterion of Demailly--P\u{a}un~\cite{DP} makes
$\{E\}$ nef, contradicting the exceptional family property.

Set $Z=\bigcup_{i=1}^ND_i$.
If $N=0$, the numerical inequalities are strict on every proper
subvariety, and \cite[Theorem~1.3]{ChuLeeTakahashi} gives a smooth
solution. When $N>0$, negative definiteness and
\eqref{eq:uniform-divisor-gap-intro} allow us to construct a
subsolution with logarithmic singularities along $Z$.

\begin{proposition}\label{prop:intro-log-subsolution}
Assume \eqref{eq:semisubsolution} and $N>0$.
There exist positive rational numbers $a_1,\ldots,a_N$, a smooth
closed real $(1,1)$-form $\widehat\chi$, and a quasi-plurisubharmonic
function $\rho\in C^\infty(M\setminus Z)$ with logarithmic
singularities along $Z$, such that
\[
 [\widehat\chi]=[\chi]-\sum_{i=1}^Na_i\{D_i\},
 \qquad P_\omega(\widehat\chi)<\theta,
 \qquad Q_\omega(\widehat\chi)<\Theta
\]
for some $\theta<\Theta<\pi$, and
$
 \chi+\ii\partial\bar\partial\rho
 =\widehat\chi+\sum_{i=1}^Na_i[D_i]
$
in the sense of currents. Here $[D_i]$ denotes the current of
integration over $D_i$.
\end{proposition}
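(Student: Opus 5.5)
Since the intersection matrix $A_{ij}=\int_M G_{n-2}(\chi)\wedge\{D_i\}\wedge\{D_j\}$ is negative definite by Theorem~\ref{strgenthen}, and its off-diagonal entries are nonnegative (they are integrals over $D_i\cap D_j$ of $G_{n-2}(\chi)$, which is positive on the relevant subvarieties by the semisubsolution condition), $-A$ is a nonsingular M-matrix. Hence $(-A)^{-1}$ has nonnegative entries, and a positive rational vector $a=(a_1,\dots,a_N)$ can be chosen with $\sum_j a_jA_{ij}<0$ for every $i$. I intend to take $\widehat\chi=\chi-\sum_i a_i\Theta_i$. Here $\Theta_i=-\ii\partial\bar\partial\log|s_i|^2_{h_i}$ is the curvature of a suitable smooth metric $h_i$ on $\mathcal O(D_i)$. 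I then set $\rho=\sum_i a_i\log|s_i|^2_{h_i}$, so that the Poincar\'e--Lelong formula gives $\chi+\ii\partial\bar\partial\rho=\widehat\chi+\sum a_i[D_i]$. The function $\rho$ has logarithmic singularities along $Z$, and it is quasi-psh because $\ii\partial\bar\partial\rho\ge -\sum a_i\Theta_i$. The scale of the $a_i$ will be shrunk at the end (replace $a$ by $\epsilon a$ with $\epsilon$ rational).

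The real content lies in showing that $[\widehat\chi]$ contains a representative with $P_\omega<\theta$. The plan is to verify the Nakai--Moishezon type criterion of Chu--Lee--Takahashi~\cite[Theorem~1.3]{ChuLeeTakahashi}, in its subsolution form (the strict numerical inequalities on all proper subvarieties), for the class $[\chi]-\epsilon\sum a_i\{D_i\}$ with adjusted phase, and then to invoke Lin's solvability~\cite{LinSolvability}. The result is a $\mathcal C$-subsolution $\widehat\chi$ in the new class, which satisfies $P_\omega(\widehat\chi)<\theta$. Since $Q<\pi$ is open, after compactness we get $Q_\omega(\widehat\chi)\le\max Q<\Theta<\pi$ for some $\Theta$.

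The positivity check proceeds by expanding in $\epsilon$:
\[
\int_V G_p([\chi]-\epsilon\{E\})=\int_V G_p(\chi)-\epsilon p\int_V G_{p-1}(\chi)\wedge\{E\}+O(\epsilon^2),
\]
where $E=\sum a_iD_i$. There are three cases.
\begin{enumerate}[label=(\roman*)]
\item For $V$ of dimension $\le n-2$, we have $\int_VG_p(\chi)>0$, so the inequality holds for small $\epsilon$, provided the positivity is uniform. I would obtain uniformity via compactness of cycle spaces or, better, by using that the semisubsolution gives pointwise positivity of $G_p(\chi)|_V$ bounded below by $c\,\omega^p$ for $p\le n-2$.
\item For divisors $D\notin\{D_i\}$, I use the uniform gap~\eqref{eq:uniform-divisor-gap-intro}: $\int_DG_{n-1}\ge\delta\int_D\omega^{n-1}$. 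The first-order term is $\epsilon\int_D G_{n-2}\wedge\{E\}$, which is bounded by $C\epsilon\int_D\omega^{n-1}$ since $E|_D$ is effective with mass controlled by degree. This holds for small $\epsilon$.
\item For $D=D_i$, the zeroth-order term vanishes and the first-order term is $-\epsilon(n-1)\sum_ja_jA_{ij}>0$ by the choice of $a$.
\end{enumerate}
The top-degree integral on $M$ is handled by redefining the phase: the new compatibility constant changes by $O(\epsilon)$, and its first derivative is $-\int_M G_{n-1}\wedge\{E\}=0$, since each $D_i$ is destabilizing. The phase is therefore $\theta+O(\epsilon^2)$, and the $\epsilon^2$ correction is absorbed.

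The main obstacle is making the estimates uniform over all subvarieties together with the phase shift. Mixed terms with $\gamma^{p-k}$ and the $O(\epsilon^2)$ errors must be bounded by $C\epsilon^2\int_V\omega^p$ uniformly in $V$. The difficulty is that $\{E\}$ is not positive, so $\int_V G_{p-2}\wedge\{E\}^2$ needs care. I would first try to handle these terms by splitting $V\subset Z$ from $V\not\subset Z$ and using the effectivity of $E|_V$ in the latter case, and by restricting to $D_i$ with adjunction-type bounds in the former. Once this is done, any remaining gap between the resulting non-strict condition $P_\omega\le\theta'$ and the strict inequality $P_\omega(\widehat\chi)<\theta$ is closed by the strictness of the perturbed numerical inequalities, with $\theta$ replaced by the slightly adjusted phase, which is $\le\theta$ for suitable signs.
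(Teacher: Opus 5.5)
Your choice of $a$ via the M-matrix property and your checks on proper subvarieties match the paper (Corollary~\ref{cor:common-divisorial-direction}, Lemma~\ref{lem:logbar-perturbation}). The uniformity you call the main obstacle is in fact routine if you perturb the smooth form $\chi_s=\chi-s\theta_E$ rather than the class. For $p\le n-2$, Lemma~\ref{lem:descending-positivity} and compactness give $G_p(\chi_s)|_V>0$ pointwise. For divisors, $G_{n-1}(\chi_s)\ge G_{n-1}(\chi)-Cs\,\omega^{n-1}$ together with the gap $\delta$ suffices. No analysis of $\{E\}^2|_V$ or adjunction is needed.

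The genuine gap is the top-degree term. For every small nonzero $F\in\Span\{\{D_i\}\}$ you have $L(F)=0$, and $Q$ is negative definite there. Hence
$\int_MG_{n,\theta}(\chi-\epsilon\theta_F)=\binom n2\epsilon^2Q(F,F)+O(\epsilon^3)<0$.
Write $\int_M(\chi_\epsilon+\ii\omega)^n=|Z_\epsilon|e^{\ii\theta_\epsilon}$. Then $\int_MG_{n,\theta}(\chi_\epsilon)=|Z_\epsilon|\sin(\theta-\theta_\epsilon)/\sin\theta$, so the adjusted phase satisfies $\theta_\epsilon>\theta$ whatever signs you choose. Your claim that it is ``$\le\theta$ for suitable signs'' is false. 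Applying Chu--Lee--Takahashi and Lin at phase $\theta_\epsilon$ only yields $P_\omega(\widehat\chi)<\theta_\epsilon$, which does not give $P_\omega(\widehat\chi)<\theta$.

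Strictness on proper subvarieties cannot close this gap. To run the criterion at a phase $\theta'\le\theta$ you need $\int_MG_{n,\theta'}\ge0$, and this integral only becomes more negative as $\theta'$ decreases. Nor can you add $C\epsilon^2\omega$ to restore nonnegativity: $\rho$ is a function, so $[\widehat\chi]$ must be exactly $[\chi]-\sum a_i\{D_i\}$.

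The missing ingredient is a criterion at the fixed phase $\theta$ that tolerates a slightly negative normalized top integral. The paper proves this in Theorem~\ref{thm:logbar-signed-criterion} and Corollary~\ref{cor:logbar-small-defect}. If $\tau=\int_MG_{n,\theta}(\eta)/\int_M\omega^n>-\epsilon_{n,\theta}$ and the inequalities on proper subvarieties are strict, then $[\eta]$ contains $\eta_u\in\Gamma_{\omega,\theta,\Theta_0}$ with $G_{n,\theta}(\eta_u)=\tau\omega^n$. This also gives the bound $Q_\omega<\Theta$.

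You cannot simply quote the Chu--Lee--Takahashi subsolution theorem here. Its statement assumes a nonnegative top integral, and the mass-concentration step on $M\times M$ uses that assumption. The paper instead reruns the Chen/Chu--Lee--Takahashi continuity and mass-concentration argument with two replacements:
\begin{enumerate}
\item twisted solvability with a negative constant right-hand side that also controls $Q_\omega$ (Lemma~\ref{lem:logbar-twisted-phase});
\item the pointwise volume bound of Lemma~\ref{lem:logbar-volume}, which keeps the diagonal mass uniform as $t\downarrow t_*$ without positivity of the right-hand side.
\end{enumerate}
Applying this to $\chi_s$ for small rational $s$ gives $\widehat\chi$. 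Then $\rho=\varphi+\frac1{2\pi}\sum_ib_i\log\|s_i\|^2_{h_i}$, where $\varphi$ comes from the $\partial\bar\partial$-lemma. Your $\rho$ omits $\varphi$.
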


We use this singular subsolution to obtain a bounded solution with
local smoothness away from the destabilizing divisors.

\begin{theorem}\label{thm:regularity of the weak solution}
Assume \eqref{eq:semisubsolution}.
There exists a bounded quasi-plurisubharmonic function
$u_\infty\in L^\infty(M)\cap C^\infty(M\setminus Z)$, normalized by
$\sup_Mu_\infty=0$, such that
$\chi+\ii\partial\bar\partial u_\infty\geq\cot\theta\,\omega$
as currents and
\[
 \Rea(\chi+\ii\partial\bar\partial u_\infty+\ii\omega)^n
 =\cot\theta\,\Ima(\chi+\ii\partial\bar\partial u_\infty+\ii\omega)^n
\]
on $M$ in the Bedford--Taylor sense.
On $M\setminus Z$, the equation holds smoothly on the branch
$Q_\omega(\chi+\ii\partial\bar\partial u_\infty)=\theta$.
\end{theorem}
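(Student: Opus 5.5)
We construct $u_\infty$ as a limit of smooth solutions for a family of perturbed problems. We then use the log subsolution of Proposition~\ref{prop:intro-log-subsolution} to get uniform local estimates away from $Z$. If $N=0$, \cite[Theorem~1.3]{ChuLeeTakahashi} gives a smooth solution directly, so assume $N>0$.

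\textbf{Step 1: approximation.} For $\varepsilon\in(0,1]$, set $\chi_\varepsilon=\chi+\varepsilon\omega$. Let $\theta_\varepsilon$ be the phase determined by the compatibility condition \eqref{eq:compatibility} for $[\chi_\varepsilon]$, so $\theta_\varepsilon<\theta$ and $\theta_\varepsilon\to\theta$ as $\varepsilon\to0$. We check that $(\chi_\varepsilon,\omega,\theta_\varepsilon)$ is strictly numerically stable on every proper subvariety, by the following perturbation argument.
\begin{itemize}
\item \emph{Divisors.} $\frac{d}{d\varepsilon}\int_DG_{n-1}$ is a positive multiple of $\int_DG_{n-2}\wedge\omega$, which is positive since $G_p(\chi)>0$ for $p\le n-2$. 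This has to be compared with the change coming from $\theta_\varepsilon$. On non-destabilizing divisors, the uniform gap \eqref{eq:uniform-divisor-gap-intro} absorbs this change, so it suffices to control the finitely many $D_i$.
\item \emph{Lower-dimensional subvarieties.} These are strict already, by the remark after \eqref{eq:semisubsolution}.
\end{itemize}
If this check fails for the $D_i$, we use the alternative family $\chi_\varepsilon=\widehat\chi_\varepsilon$: interpolate the classes $[\chi]-\varepsilon\sum a_i\{D_i\}$ and use Proposition~\ref{prop:intro-log-subsolution} together with convexity of $P_\omega$ to get a genuine smooth subsolution. Then \cite[Theorem~1.3]{ChuLeeTakahashi}, or directly Lin's theorem \cite{LinSolvability} applied to a $\mathcal C$-subsolution, gives smooth $u_\varepsilon$ solving the equation with phase $\theta_\varepsilon$ on the branch $Q_\omega=\theta_\varepsilon$, normalized by $\sup u_\varepsilon=0$. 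Since $\theta_\varepsilon<\pi$, the eigenvalues satisfy $\lambda_i>\cot\theta_\varepsilon$, so $\chi_\varepsilon+\ii\partial\bar\partial u_\varepsilon\ge\cot\theta_\varepsilon\,\omega$. Hence all the $u_\varepsilon$ are $C\omega$-psh uniformly in $\varepsilon$.

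\textbf{Step 2: uniform $L^\infty$ bound.} Since the $u_\varepsilon$ are uniformly quasi-psh and normalized, they are precompact in $L^1$. For a lower bound we rewrite the equation in Monge--Amp\`ere form: on the supercritical branch, $(\chi_{u}-\cot\theta\,\omega)^n$ is bounded above by $C\omega^n$ via an algebraic inequality in the eigenvalues. The Ko{\l}odziej / Guo--Phong--Tong $L^\infty$ estimate, applied with reference class $[\chi-\cot\theta\,\omega]+\varepsilon[\omega]$ (which is semipositive/big), then gives $\|u_\varepsilon\|_{L^\infty}\le C$. Alternatively, a comparison with $\rho$ gives a one-sided bound.

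\textbf{Step 3: local estimates off $Z$ and passage to the limit.} The main obstacle is the $C^2$ estimate on compact subsets of $M\setminus Z$. We run the Collins--Jacob--Yau / Sz\'ekelyhidi $C^2$ argument with the $\mathcal C$-subsolution replaced by $\underline u=\rho$. Near $Z$ we have $\rho\to-\infty$, so the maximum principle applied to
\[
\log\lambda_1+\varphi(|\nabla u_\varepsilon|^2)+\psi(u_\varepsilon-\rho)
\]
attains its maximum off $Z$. The strict conditions $P_\omega(\widehat\chi)<\theta$ and $Q_\omega(\widehat\chi)<\Theta<\pi$ give the key inequality: either $\sum F^{i\bar i}\ge\kappa$ or the linearized operator applied to $\rho-u_\varepsilon$ is $\ge\kappa\sum F^{i\bar i}$. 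These hold uniformly for $\theta_\varepsilon$ close to $\theta$. The result is the bound
\[
|\partial\bar\partial u_\varepsilon|\le Ce^{-A\rho},
\]
and similarly for the gradient, via a blow-up argument localized away from $Z$. Concavity of the operator on the branch $Q_\omega<\pi$, together with Evans--Krylov and Schauder, then gives $C^k_{loc}(M\setminus Z)$ bounds. Extracting a subsequence, $u_\varepsilon\to u_\infty$ in $L^1$ and in $C^\infty_{loc}(M\setminus Z)$, with $u_\infty$ bounded, $\sup u_\infty=0$ (by Hartogs), and $\chi+\ii\partial\bar\partial u_\infty\ge\cot\theta\,\omega$.

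The equation on $M\setminus Z$ holds smoothly on the branch $Q_\omega=\theta$. To get it globally in the Bedford--Taylor sense, expand $G_n$ as a combination of mixed products of $\chi_{u}-\cot\theta\,\omega\ge0$ and $\omega$. Bedford--Taylor products of bounded psh potentials put no mass on the pluripolar set $Z$, so the identity extends from $M\setminus Z$ to $M$.
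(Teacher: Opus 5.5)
\textbf{Gap: the global $L^\infty$ bound in Step 2.} You reduce to Ko{\l}odziej / Guo--Phong--Tong via the algebraic inequality $(\chi_u-\cot\theta\,\omega)^n\le C\omega^n$ on the supercritical branch, but this inequality is false for $n\ge3$.

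\begin{itemize}
\item Write $\lambda_i=\cot\theta_i$ with $\sum_i\theta_i=\theta$. Then $\lambda_i-\cot\theta=\sin(\theta-\theta_i)/(\sin\theta\sin\theta_i)$.
\item For $n=3$, take $\theta_1=\theta_2=\epsilon$ and $\theta_3=\theta-2\epsilon$. The product of the three factors is $\approx 2/(\epsilon\sin^2\theta)\to\infty$.
\item The inequality holds only for $n=2$, where the equation reads $(\chi_u-\cot\theta\,\omega)^2=\csc^2\theta\,\omega^2$.
\end{itemize}

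So the Monge--Amp\`ere density of $\chi_u-\cot\theta\,\omega$ is only uniformly in $L^1$ (its integral is cohomological), which is not enough for those theorems. Your fallback, comparison with $\rho$, gives at best $u\ge\rho-C$, and this degenerates along $Z$. Even that bound is not a plain maximum-principle comparison. The form $\widehat\chi$ is only a cone subsolution: $Q_\omega(\widehat\chi)<\Theta$ with $\Theta>\theta$, so $G_{n,\theta}(\widehat\chi,\omega)$ may be negative. This is why the paper proves the local bound by ABP at the minimum of $v_t=u_t-\rho$. Your Step 3 also silently needs $u_\varepsilon-\rho\ge-C$ to turn the maximum principle into $|\partial\bar\partial u_\varepsilon|\le Ce^{-A\rho}$.

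\textbf{The missing idea is a bounded barrier.} The paper takes $\psi=e^{a\rho}$, so that on $M\setminus Z$ one has $\chi+\ii\partial\bar\partial\psi=(1-a\psi)\chi+a\psi\widehat\chi+a^2\psi\,\ii\partial\rho\wedge\bar\partial\rho$.
\begin{itemize}
\item Concavity of $\cot P_\omega$ along $(1-s)\chi+s\widehat\chi$ gives both membership in $\Gamma_{\omega,\theta}$ and $G_{n-1,\theta}\ge\delta s\,\omega^{n-1}$.
\item The rank-one gradient term then yields $G_{n,\theta}(\chi+\ii\partial\bar\partial\psi,\omega)\ge\bigl(-C+c\,e^{2a\rho}|\partial\rho|^2_\omega\bigr)\omega^n\to+\infty$ near $Z$ for small $a$.
\item Moreover, $u-\psi$ cannot have a local minimum on $Z$.
\end{itemize}
At the minimum of $u_t-\psi$, monotonicity of $G_{n,\theta}$ in the cone and the bounded right-hand side force the minimum into a fixed $K\Subset M\setminus Z$. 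There the ABP bound $u_t\ge\rho-C$ applies, giving $u_t\ge-C_K-1$ globally.

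\textbf{Two smaller points.}
\begin{itemize}
\item Your first family $\chi+\varepsilon\omega$ with shifted phase really can fail. For $n=2$, $\theta>\pi/2$ and a destabilizing curve $D$, one computes $\int_DG_{1,\theta_\varepsilon}(\chi+\varepsilon\omega)\approx\varepsilon\cot\theta\,\bigl(\int_M\omega^2/\int_M\chi\wedge\omega\bigr)\int_D\omega<0$, since $\int_M\chi\wedge\omega>0$. Only your fallback $(1-\varepsilon)\chi+\varepsilon\widehat\chi$, with phase $>\theta$, is viable. The paper sidesteps this by fixing $\theta$ and solving $G_{n,\theta}(\chi+t\omega+\ii\partial\bar\partial u_t,\omega)=c_t\omega^n$ with $c_t\sim t>0$; here $\chi+t\omega\in\Gamma_{\omega,\theta}$ is itself a subsolution.
\item The gradient term and the ``localized blow-up'' in Step 3 are not a standard argument near $Z$. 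The paper avoids gradients: it proves $\mathcal L_t\log\tr_\omega(\eta_t+A\omega)\ge-C$ directly and applies the maximum principle to $\log W-Av_t$. Gradient bounds then follow from local $W^{2,p}$ estimates.
\end{itemize}
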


Sun~\cite{SunLYZBoundary} studied  supercritical LYZ
equations under boundary conditions with an additional nef and big class.
More recently, Murakami~\cite[Theorem~1.15]{MurakamiBoundary}
proved existence and uniqueness of an admissible weak solution
defined by non-pluripolar products under a boundary approximation
condition. Our regularity theorem assumes a smooth semisubsolution
and proves global boundedness and smoothness outside the union of
the destabilizing prime divisors.

The paper is organized as follows.
Section~\ref{sec:preliminaries} recalls exceptional families, the
LYZ cone, and the existence results used below.
Section~\ref{sec:destabilizing-divisors} proves
Theorem~\ref{thm:main-modified-nef} and
\eqref{eq:uniform-divisor-gap-intro}.
Section~\ref{sec:hodge} establishes the Hodge inequality and
negative semidefiniteness needed in
Section~\ref{sec:negative-definiteness}, where we prove
Theorem~\ref{strgenthen}.
Section~\ref{sec:logbar} proves
Proposition~\ref{prop:intro-log-subsolution}.
Finally, Section~\ref{sec:bounded-LYZ} proves
Theorem~\ref{thm:regularity of the weak solution} by uniform
estimates and passage to the limit.

\section{Preliminaries}~\label{sec:preliminaries}

In this section, we recall some basic facts on exceptional families and
the supercritical LYZ cone, and record a twisted solvability result for
the LYZ equation.
\subsection{Exceptional families of prime divisors}

We briefly recall
Boucksom's definition of exceptional families of prime divisors on compact complex manifolds. In this paper, we only use the K\"ahler case.

\begin{definition}
Let $M$ be a compact complex manifold with a Hermitian form $\omega_0$. A class $\alpha\in H^{1,1}_{\BC}(M,\R)$ is called
\emph{modified K\"ahler} if it contains a K\"ahler current $T$ whose
generic Lelong number $\nu(T,D)$ for every prime divisor $D$ is zero. It is called
\emph{modified nef} if, for every $\varepsilon>0$, it contains a
closed current
$
T_\varepsilon\geq-\varepsilon\omega_0
$
with $\nu(T_{\epsilon},D)=0$ for every prime divisor $D$.
\end{definition}

The set of modified K\"ahler classes is an open convex cone called the modified Kähler cone and
denoted by $\MK$. The set of modified nef classes is a closed convex cone called the modified nef cone and
denoted by $\MN$.  In this paper, $M$ is K\"ahler, so $\MK$ is nonempty and hence $\MN=\overline{\MK}$.

\begin{proposition}[Boucksom {\cite{Boucksom}}]\label{prop:Boucksom-MK}
Let $M$ be a compact complex manifold. A class $\alpha$ lies in $\MK$ if and only if there exists a
modification
\(
\mu:\widetilde M\longrightarrow M
\)
and a K\"ahler class $\widetilde\alpha$ on $\widetilde M$ such that
\(
\alpha=\mu_*\widetilde\alpha.
\)
\end{proposition}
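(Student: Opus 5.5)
The plan is to prove the two implications separately. The direction from a Kähler class upstairs to a modified Kähler class is a direct pushforward argument. The converse uses Demailly regularization followed by a resolution of singularities. Throughout, $\omega_0$ is the fixed Hermitian (here Kähler) form on $M$. We also use that for a modification $\mu:\widetilde M\to M$ of a (normal, indeed smooth) $M$, the image $\mu(\mathrm{Exc}(\mu))$ of the exceptional locus has codimension $\geq2$.

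\emph{($\Leftarrow$).} Let $\widetilde\omega$ be a Kähler form in $\widetilde\alpha$. By compactness of $\widetilde M$ there is $c>0$ with $\widetilde\omega\geq c\,\mu^*\omega_0$. The pushforward $T:=\mu_*\widetilde\omega$ is a closed positive current in $\mu_*\widetilde\alpha=\alpha$. Since $\mu$ has degree one, $\mu_*\mu^*\omega_0=\omega_0$ as currents, so $T\geq c\,\omega_0$ and $T$ is a Kähler current. For a prime divisor $D\subset M$, a generic point of $D$ lies outside the codimension-$\geq2$ set $\mu(\mathrm{Exc}(\mu))$. Near such a point $\mu$ is biholomorphic, so $T$ is smooth there. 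Hence the generic Lelong number $\nu(T,D)$ vanishes, and $\alpha\in\MK$.

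\emph{($\Rightarrow$).} Let $T\in\alpha$ be a Kähler current, $T\geq\varepsilon\omega_0$, with $\nu(T,D)=0$ for every prime divisor $D$.
\begin{enumerate}[label=(\roman*)]
\item Apply Demailly's regularization to obtain $T'\in\alpha$ with analytic singularities, $T'\geq\frac\varepsilon2\omega_0$, and $\nu(T',x)\leq\nu(T,x)$ pointwise. In particular $T'$ still has zero generic Lelong numbers along all divisors, so its singular set $\{\nu(T',\cdot)>0\}$ has codimension $\geq2$.
\item Resolve the ideal sheaf defining the singularities of $T'$ by blow-ups with smooth centers lying over this codimension-$\geq2$ set. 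This gives $\mu:\widetilde M\to M$ with
\[
\mu^*T'=\theta+[E],
\]
where $\theta$ is a smooth form with $\theta\geq\frac\varepsilon2\mu^*\omega_0$ and $E$ is an effective $\mathbb R$-divisor supported on $\mathrm{Exc}(\mu)$.
\item Since $\mu$ is a composition of blow-ups with smooth centers, there is an effective $\mu$-exceptional divisor $F$ with $-F$ $\mu$-ample. Concretely, $\mu^*\omega_0-\delta\,\Theta_F$ is a Kähler form for small $\delta>0$, where $\Theta_F$ is a curvature form of $\mathcal O(F)$. Hence $\widetilde\alpha:=\{\theta\}-\delta'\{F\}$ is a Kähler class on $\widetilde M$ for small $\delta'>0$.
\item Compute the pushforward. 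We have $\mu_*[E]=0$ and $\mu_*[F]=0$, since exceptional divisors map to sets of codimension $\geq2$. Moreover $\mu_*\mu^*T'=T'$. Therefore $\mu_*\widetilde\alpha=\{\mu_*\theta\}=\{T'\}=\alpha$.
\end{enumerate}

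\emph{Main obstacle.} The delicate part is steps (ii)--(iii): producing a \emph{strictly} positive class upstairs. The form $\theta$ is only bounded below by the pullback $\mu^*\omega_0$, which degenerates along $\mathrm{Exc}(\mu)$, so $\{\theta\}$ itself need not be Kähler. The fix is the relatively ample exceptional divisor $-F$, whose subtraction does not change the pushforward. The vanishing of the generic Lelong numbers is what guarantees that every divisorial component of $\mu^*T'$ is $\mu$-exceptional, so that it disappears under $\mu_*$.
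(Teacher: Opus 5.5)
Your proof is correct. The paper does not prove this proposition and simply quotes it from Boucksom; your argument is essentially Boucksom's original proof. For one direction you push forward a K\"ahler form. For the other you use Demailly regularization, principalization over the codimension-$\geq2$ singular locus, and subtraction of a small relatively anti-ample exceptional divisor, whose pushforward vanishes.
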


\begin{definition}
A family  of prime divisors $D_1,\ldots,D_N$ is said
to be an \emph{exceptional} family if and only if the convex cone generated by their cohomology classes intersects the
modified nef cone $\MN$ at 0 only.
\end{definition}

Boucksom proved that the classes of prime divisors in an exceptional family are linearly
independent in $H^{1,1}_{BC}(M,\mathbb{R})$; see~\cite[Proposition~3.11(3)]{Boucksom}. Consequently,
the cardinality of an exceptional family is bounded by the Picard
number $\rho(M)$.

\subsection{The supercritical LYZ cone and twisted LYZ equation }
\label{subsec:LYZ-cone}
For $0\leq p\leq n$, recall
\begin{equation}\label{eq:Gp-section2}
G_p(\chi)
=
\Rea(\chi+\ii\omega)^p
-
\cot\theta\,\Ima(\chi+\ii\omega)^p.
\end{equation}

\begin{definition}\label{def:Gamma}
Let $\lambda_1(\chi),\ldots,\lambda_n(\chi)$ denote the eigenvalues of
$\chi$ with respect to $\omega$. We write 
\[
P_\omega(\chi)
:=
\max_i\sum_{j\neq i}\arccot\lambda_j(\chi),
\qquad
Q_\omega(\chi)
:=
\sum_j\arccot\lambda_j,
\]
For $0<\theta<\Theta<\pi$, let 
\begin{equation}\label{eq:Gamma}
\Gamma_{\omega,\theta}
:=
\left\{
\chi\in\Lambda_{\mathbb R}^{1,1}(M)
\,\middle|\,
P_\omega(\chi)<\theta
\right\},
\end{equation} and
\[
\Gamma_{\omega,\theta,\Theta}
:=
\left\{
\chi\in\Lambda_{\mathbb R}^{1,1}(M)
\,\middle|\, P_\omega(\chi)<\theta,\ Q_\omega(\chi)<\Theta\right\}.
\]
Their closures are denoted by 
$
\overline{\Gamma}_{\omega,\theta}
$  and $\overline{\Gamma}_{\omega,\theta,\Theta}
$ respectively.
\end{definition}

We record the standard property of the
supercritical cone. We refer to Lemma 8.2 of~\cite{CollinsJacobYau} for the proof.

\begin{lemma}\label{lem:descending-positivity}
If $\chi\in\Gamma_{\omega,\theta}$, then for every $x\in M$
and every complex $p$-dimensional subspace $V\subset T_xM$,
\begin{equation}\label{eq:descending-positivity}
  G_p(\chi)|_V>0,
  \qquad 1\leq p\leq n-1.
\end{equation}
If $\chi\in\overline{\Gamma}_{\omega,\theta}$, then  $G_p(\chi)|_V>0$ for $1\leq p\leq n-2$, while
$G_{n-1}(\chi)\geq0$ for every $x\in M$.
\end{lemma}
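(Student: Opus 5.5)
The statement is pointwise linear algebra, so I would fix $x\in M$ and a complex $p$-dimensional subspace $V\subset T_xM$ and compute $G_p(\chi)|_V$ explicitly in terms of eigenvalues. First, restrict $\chi$ and $\omega$ to $V$. Since $\omega|_V$ is positive definite, I can choose a basis of $V$ that is $\omega|_V$-unitary and diagonalizes $\chi|_V$, with eigenvalues $\mu_1\geq\cdots\geq\mu_p$. Writing $\mu_j=\cot\varphi_j$ with $\varphi_j\in(0,\pi)$, we have $\mu_j+\ii=e^{\ii\varphi_j}/\sin\varphi_j$. Hence
\[
(\chi+\ii\omega)^p|_V=\frac{e^{\ii\Phi}}{\prod_j\sin\varphi_j}\,\omega^p|_V,\qquad \Phi:=\sum_{j=1}^p\varphi_j ,
\]
and therefore
\[
G_p(\chi)|_V=\frac{\cos\Phi-\cot\theta\sin\Phi}{\prod_j\sin\varphi_j}\,\omega^p|_V=\frac{\sin(\theta-\Phi)}{\sin\theta\,\prod_j\sin\varphi_j}\,\omega^p|_V .
\]
Since $\Phi>0$ and $\theta<\pi$, we get $\theta-\Phi<\pi$. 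So $G_p(\chi)|_V>0$ exactly when $\Phi<\theta$, and $G_p(\chi)|_V\geq0$ when $\Phi\leq\theta$ (using $\Phi>\theta-\pi$).

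Next, I would compare the angles $\varphi_j$ with the angles of $\chi$ on all of $T_xM$. Order the eigenvalues of $\chi$ with respect to $\omega$ as $\lambda_1\geq\cdots\geq\lambda_n$ and put $\theta_i=\arccot\lambda_i$, so that $\theta_1\leq\cdots\leq\theta_n$. By the Cauchy interlacing (min–max) theorem for the compression of a Hermitian form to a $p$-dimensional subspace, $\mu_j\geq\lambda_{j+n-p}$. Since $\arccot$ is decreasing, this gives $\varphi_j\leq\theta_{j+n-p}$, and therefore
\[
\Phi\leq\sum_{i=n-p+1}^{n}\theta_i=P_\omega(\chi)-\sum_{i=2}^{n-p}\theta_i ,
\]
because $P_\omega(\chi)=\theta_2+\cdots+\theta_n$ is obtained by omitting the smallest angle.

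To conclude, I split into the two cases of the lemma. If $\chi\in\Gamma_{\omega,\theta}$ and $1\leq p\leq n-1$, then $\Phi\leq P_\omega(\chi)<\theta$, giving strict positivity. If $\chi\in\overline\Gamma_{\omega,\theta}$, so that $P_\omega(\chi)\leq\theta$, there are two subcases:
\begin{enumerate}[label=(\roman*)]
\item For $p\leq n-2$, the subtracted sum $\sum_{i=2}^{n-p}\theta_i$ is nonempty and consists of strictly positive angles, so $\Phi<\theta$ and $G_p(\chi)|_V>0$.
\item For $p=n-1$, we only get $\Phi\leq\theta$, which yields $G_{n-1}(\chi)|_V\geq0$ for every hyperplane $V$. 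This means that $G_{n-1}(\chi)$ is a (weakly) positive $(n-1,n-1)$-form at $x$.
\end{enumerate}

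The only step requiring care is the interlacing comparison, together with keeping the monotonicity of $\arccot$ straight so that the restriction picks out the $p$ \emph{largest} angles. The phase computation itself is routine.
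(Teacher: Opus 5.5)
Your proof is correct and complete. The paper gives no argument of its own here, only a citation to Collins--Jacob--Yau (Lemma 8.2), and your combination of the restriction formula $G_p(\chi)|_V=\sin(\theta-\Phi)\,\omega^p|_V/(\sin\theta\prod_j\sin\varphi_j)$ with Cauchy interlacing $\mu_j\ge\lambda_{j+n-p}$ is the standard argument for this fact.

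One aside is overstated: $G_p(\chi)|_V>0$ does \emph{not} hold exactly when $\Phi<\theta$, since it also holds for $\Phi\in(\theta+\pi,\theta+2\pi)$ when $p\ge2$. You only use the ``if'' direction, so nothing in the proof is affected.
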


Chen proved the existence of a subsolution $\chi_{\underline{u}}\in \Gamma_{\omega,\theta,\Theta}$ under a
uniform numerical stability condition~\cite{Chen}.
Chu--Lee--Takahashi~\cite{ChuLeeTakahashi} removed the uniformity requirement and obtained
a Nakai--Moishezon type criterion. For the perturbations used in this paper, we shall also use Chen's uniform
version.

We shall use the following consequence of
\cite[Theorem~1.4]{LinSolvability}.

\begin{proposition}
\label{prop:twisted-existence}
There exists $\varepsilon_{n,\theta}>0$, depending only on $n$
and $\theta$, with the following property.
Suppose that $[\chi]$ contains a smooth form
$\chi_v\in\Gamma_{\omega,\theta}$ and that
$f\in C^\infty(M,\R)$ satisfies
\begin{equation}\label{eq:twisted-f-lower}
  f>-\varepsilon_{n,\theta}
\end{equation}
and
\begin{equation}\label{eq:twisted-compatibility}
  \int_M f\,\omega^n=\int_M G_n(\chi).
\end{equation}
Then there is a smooth representative
$\chi_u\in[\chi]\cap\Gamma_{\omega,\theta}$ satisfying
\begin{equation}\label{eq:twisted-equation}
  G_n(\chi_u)=f\,\omega^n.
\end{equation}
\end{proposition}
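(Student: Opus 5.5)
The plan is to recast \eqref{eq:twisted-equation} as a scalar equation in the eigenvalues of $\chi_u$ and then check the hypotheses of \cite[Theorem~1.4]{LinSolvability}, which we take to be a solvability statement for the twisted supercritical equation in the presence of a $\mathcal C$-subsolution.

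\textbf{Step 1: rewrite the equation.} Write $\lambda_j=\lambda_j(\chi_u)$. Since $\lambda_j+\ii=\sqrt{1+\lambda_j^2}\,e^{\ii\arccot\lambda_j}$, we get
\[
 (\chi_u+\ii\omega)^n=R\,e^{\ii Q}\,\omega^n,\qquad R=\prod_j\sqrt{1+\lambda_j^2},\quad Q=Q_\omega(\chi_u).
\]
Hence
\[
 G_n(\chi_u)=\frac{R\sin(\theta-Q)}{\sin\theta}\,\omega^n,
\]
and \eqref{eq:twisted-equation} becomes
\[
 \sin\bigl(\theta-Q_\omega(\chi_u)\bigr)=\frac{f\sin\theta}{R}.
\]

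\textbf{Step 2: fix the branch.} Because $R\geq1$, the right-hand side lies in $(-\varepsilon_{n,\theta}\sin\theta,\infty)$. Choose $\varepsilon_{n,\theta}$ small in terms of $\theta$, so that $\theta+\arcsin(\varepsilon_{n,\theta})<\pi$. Then an admissible solution must satisfy
\[
 Q_\omega(\chi_u)<\theta+\arcsin\varepsilon_{n,\theta}=:\Theta'<\pi,
\]
so $Q_\omega(\chi_u)$ lies in the range where the operator $\lambda\mapsto R\sin(\theta-Q)$ is elliptic. We also need it to have convex (or concave) level sets on $\Gamma_{\omega,\theta}$. This is exactly the setting of Lin's theorem. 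The role of the lower bound $f>-\varepsilon_{n,\theta}$ is to keep the twisted level sets inside the region $\{Q<\Theta'\}$, where the structure conditions hold; Lin's constant presumably depends only on $n$ and $\theta$ for this reason.

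\textbf{Step 3: supply the subsolution and invoke the theorem.} The form $\chi_v\in\Gamma_{\omega,\theta}$ gives $P_\omega(\chi_v)<\theta$. This is the $\mathcal C$-subsolution condition for the twisted equation: as the largest eigenvalue tends to $+\infty$, the $\arccot$ of that eigenvalue tends to $0$, so the twisted level set cannot be reached from $\chi_v$ along that direction. The compatibility condition \eqref{eq:twisted-compatibility} is the necessary integral identity, since $\int_MG_n(\chi_u)$ depends only on $[\chi]$. Lin's theorem then gives a smooth $\chi_u\in[\chi]$ solving the equation on the branch of Step 2.

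\textbf{Step 4: show $\chi_u\in\Gamma_{\omega,\theta}$.} If one prefers not to quote Lin's result as a black box, the argument underlying it is a continuity method. It connects $G_n(\chi_v)/\omega^n$ to $f$ through right-hand sides that stay above $-\varepsilon_{n,\theta}$, possibly after first deforming $\chi_v$ so that $G_n(\chi_v)$ is nearly constant. Openness comes from the linearization. Closedness comes from the $C^0$ and $C^2$ estimates furnished by the subsolution, followed by Evans--Krylov. In either route, the solution must also lie in the cone. This follows from Lemma~\ref{lem:descending-positivity}-type arguments: along the path $P_\omega<\theta$ is preserved, since $P_\omega(\chi_u)=\theta$ at a point would force the eigenvalues to degenerate in a way incompatible with the $C^2$ bound. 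Verifying that Lin's hypotheses match, and in particular that $\varepsilon_{n,\theta}$ can be chosen to depend only on $n$ and $\theta$, is the main obstacle.
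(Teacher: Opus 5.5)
Your Step~1 identity $G_n(\chi_u)=R\sin\bigl(\theta-Q_\omega(\chi_u)\bigr)\,\omega^n/\sin\theta$ is correct. But the proposal never supplies the argument the statement actually needs. You invoke \cite[Theorem~1.4]{LinSolvability} as a black box and concede that checking its hypotheses, and getting $\varepsilon_{n,\theta}$ to depend only on $n,\theta$, is ``the main obstacle''. That check \emph{is} the proof.

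Setting $\widehat\chi=\chi-\cot\theta\,\omega$ puts the equation in Lin's standard form. At a point where $f=a$, the governing one-variable polynomial is $p_a=P_{n,\theta}-a$, where $P_{m,\theta}(x)=\Rea(x+\ii)^m-\cot\theta\,\Ima(x+\ii)^m$. Three facts are needed: $P_{m,\theta}'=mP_{m-1,\theta}$; the largest real root of $P_{m,\theta}$ is $\cot(\theta/m)$; and $p_a\bigl(\cot(\theta/(n-1))\bigr)=-A_{n,\theta}-a$ with $A_{n,\theta}=1/\bigl(\sin\theta\,\sin^{n-1}(\theta/(n-1))\bigr)$. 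Since the derivatives of $p_a$ do not depend on $a$, these give $r(p_a)>r(p_a')>\cdots>r(p_a^{(n-1)})$ whenever $a>-A_{n,\theta}$. So $p_a$ is strictly right-Noetherian, and Lin's Theorem~2.2 gives strict $\Upsilon$-stability. The associated $\Upsilon_1$-cone is exactly $\{\max_i\sum_{j\neq i}\arccot\lambda_j<\theta\}$, which is what makes $\chi_v$ a $\mathcal C$-subsolution.

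This is where $\varepsilon_{n,\theta}$ comes from; one can take $\tfrac12A_{n,\theta}$. Your condition $\theta+\arcsin\varepsilon_{n,\theta}<\pi$ is not the relevant constraint. Your Step~2 claim that $Q_\omega<\Theta'$ puts you in the elliptic range is also wrong. The derivative $\partial_{\lambda_i}\bigl(R\sin(\theta-Q)\bigr)$ has the sign of $\sin\bigl(\theta-\sum_{j\neq i}\theta_j\bigr)$, so ellipticity is governed by $P_\omega<\theta$, not by an upper bound on $Q_\omega$.

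Step~4 fails as written. $P_\omega(\chi_u)=\theta$ at a point does not force any eigenvalue to degenerate, since it is attained at bounded eigenvalues, so no $C^2$ bound can rule it out. The correct argument is algebraic and again uses the threshold $A_{n,\theta}$:
\begin{enumerate}
\item At a minimum point $x_0$ of $u-v$ one has $\chi_u\geq\chi_v$, hence $P_\omega(\chi_u)(x_0)\leq P_\omega(\chi_v)(x_0)<\theta$.
\item Suppose $\sum_{i\neq j}\theta_i=\theta$ at some point. Then $\theta-Q_\omega(\chi_u)=-\theta_j$ and $R=\prod_i(\sin\theta_i)^{-1}$. Your Step~1 identity yields $f=-1/\bigl(\sin\theta\prod_{i\neq j}\sin\theta_i\bigr)\leq-A_{n,\theta}$ by concavity of $\log\sin$, contradicting $f>-\varepsilon_{n,\theta}$.
\item Thus $P_\omega(\chi_u)\neq\theta$ everywhere, and connectedness of $M$ gives $\chi_u\in\Gamma_{\omega,\theta}$.
\end{enumerate}
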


\begin{proof}
This is a direct corollary of~\cite[Theorem~1.4]{LinSolvability}.
For completeness, we include the details here to verify the hypotheses of
\cite[Theorem~1.4]{LinSolvability}. Let
\[
  A_{n,\theta}
  :=
  \frac{1}{
    \sin\theta\,
    \sin^{n-1}\!\left(\frac{\theta}{n-1}\right)
  },
  \qquad
  \varepsilon_{n,\theta}:=\frac12 A_{n,\theta}.
\]
For $0\leq m\leq n$, define the real polynomial
\[
  P_{m,\theta}(x)
  :=
  \Rea(x+\ii)^m-\cot\theta\,\Ima(x+\ii)^m.
\]
Then $P_{0,\theta}=1$ and
$P'_{m,\theta}=mP_{m-1,\theta}$ for $m\geq1$.
Moreover,  the largest real root of $P_{m,\theta}$ is
$\cot(\theta/m)$ for $m\geq1$.

For $a>-\varepsilon_{n,\theta}$, consider the symmetric  multi-linear
 polynomial
\[
  \mathcal F_a(\lambda_1,\ldots,\lambda_n)
  :=
  \Rea\prod_{i=1}^n(\lambda_i+\ii)
  -
  \cot\theta\,\Ima\prod_{i=1}^n(\lambda_i+\ii)
  -a,
\] 
and the polynomial
$p_a(x):=P_{n,\theta}(x)-a$.
At $x_*:=\cot(\theta/(n-1))$, we have
\[
  p_a(x_*)=-A_{n,\theta}-a<0.
\]
Since $p_a$ is strictly increasing on $(x_*,\infty)$ and
tends to $+\infty$, its largest real root  $r(p_a)$ lies in
$(x_*,\infty)$.
All positive-order derivatives of $p_a$ are independent of $a$,
and hence
\[
  r(p_a)>r(p_a')>\cdots>r(p_a^{(n-1)}).
\]
Thus $p_a$ is strictly right-Noetherian, and
$\mathcal F_a$ is strictly $\Upsilon$-stable by
\cite[Theorem~2.2]{LinSolvability}.  Its
$\Upsilon_1$-cone is exactly  $$\{(\lambda_1,\ldots,\lambda_n)\in \mathbb{R}^n \mid\max_i\sum_{j\neq i}\arccot\lambda_j<\theta\}.$$

Finally, set $\widehat\chi:=\chi-\cot\theta\,\omega$.
Then equation~\eqref{eq:twisted-equation} becomes
\[
  \widehat\chi_u^{\,n}
  =
  \sum_{k=1}^{n-2}
    b_k\,\widehat\chi_u^{\,k}\wedge\omega^{n-k}
  +(b_0+f)\omega^n,
\]
where the $b_k$ are constants depending only on $n$ and $\theta$.
This is Lin's standard form.
The preceding cone identification shows that
$\widehat\chi_v$ is a $\mathcal C$-subsolution for this equation.
Then by \cite[Theorem~1.4]{LinSolvability} and the compatibility
condition \eqref{eq:twisted-compatibility}, there exists a smooth
representative $\chi_u\in[\chi]$ satisfying
\eqref{eq:twisted-equation}.
It remains to verify that $\chi_u\in\Gamma_{\omega,\theta}$.
Let $x_0$ be a minimum point of $u-v$.
Then $\chi_u(x_0)\geq\chi_v(x_0)$, and hence
\[
P_\omega(\chi_u)(x_0)
\leq P_\omega(\chi_v)(x_0)<\theta.
\]
Suppose that $P_\omega(\chi_u)=\theta$ at some point.
Write $\theta_i=\operatorname{arccot}\lambda_i\in(0,\pi)$
for the eigenvalues $\lambda_i$ of $\chi_u$ with respect to
$\omega$ at that point, we may choose $j$ such that
$\sum_{i\neq j}\theta_j=\theta$.
Consequently,
\[
f
=
\frac{\sin\left(\theta-\sum_{i=1}^n\theta_i\right)}
     {\sin\theta\prod_{i=1}^n\sin\theta_i}
=
-\frac{1}{\sin\theta\prod_{i\neq j}\sin\theta_i}
\leq -A_{n,\theta},
\]
where the last inequality follows from the concavity of
$\log\sin$ on $(0,\pi)$.
This contradicts
$f>-\varepsilon_{n,\theta}>-A_{n,\theta}$.
Thus $P_\omega(\chi_u)\neq \theta$.
Since $M$ is connected, it follows that
$P_\omega(\chi_u)<\theta$ on $M$.
\end{proof}

\section{The destabilizing prime divisors for the supercritical LYZ equation}
\label{sec:destabilizing-divisors}

In this section, we prove Theorem~\ref{thm:main-modified-nef}.
The main ingredient is a
quantitative estimate on the modified
nef cone. 

Denote
\begin{equation}\label{eq:kappa}
\kappa_{n,\theta}
:=
\frac{\sin\theta}
{\sin^{n-1}\!\left(\theta/(n-1)\right)}
>0.
\end{equation}
Write $V_\omega:=\int_M\omega^n$.
Set
\begin{equation}\label{eq:A-B}
A:=\int_MG_n(\chi),
\qquad
B:=n\int_MG_1(\chi)\wedge\omega^{n-1}.
\end{equation}
We shall see below that $B>0$. Define
\begin{equation}\label{eq:c}
c
:=
\frac{A+\csc^2\theta\,\kappa_{n,\theta}V_\omega}{B}
=
\frac{1}{B}
\left(
A+\frac{V_\omega}
{\sin\theta\,\sin^{n-1}\!\left(\theta/(n-1)\right)}
\right)
>0.
\end{equation}
\begin{theorem}\label{thm:modified-nef}
Assume that the semistability condition
\eqref{eq:LYZ-subvariety-semistable}
holds. Then for every modified nef class $\beta$, one has
\begin{equation}\label{eq:modified-nef}
\int_MG_{n-1}(\chi)\wedge\beta
\geq
c\int_M\omega^{n-1}\wedge\beta,
\end{equation}
where $c>0$ is the constant defined in~\eqref{eq:c}.
\end{theorem}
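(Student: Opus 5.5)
The plan is to prove \eqref{eq:modified-nef} first for K\"ahler classes, by solving a twisted LYZ equation whose right-hand side is adapted to $\beta$, and then to pass to modified nef classes using Proposition~\ref{prop:Boucksom-MK}.

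\textbf{Step 1: reduction to a strict subsolution.} Proposition~\ref{prop:twisted-existence} requires a representative of $[\chi]$ in $\Gamma_{\omega,\theta}$, but the hypothesis is only the semistability condition \eqref{eq:LYZ-subvariety-semistable}. I would replace $\chi$ by $\chi_\varepsilon:=\chi+\varepsilon\omega$, or alternatively perturb the phase slightly.
\begin{itemize}
\item Expanding $G_{k,\theta}(\chi+\varepsilon\omega)$ in $\varepsilon$ should make every inequality in \eqref{eq:LYZ-subvariety-semistable} strict. The expansion uses lower-order terms $G_{k-1}$, which are nonnegative on $V$ by semistability.
\item Chen's uniform criterion, or Chu--Lee--Takahashi, then yields a smooth representative of $[\chi_\varepsilon]$ in $\Gamma_{\omega,\theta}$.
\item The constant $c$ in \eqref{eq:c} depends continuously on $A$ and $B$. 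So it suffices to prove the inequality for $\chi_\varepsilon$ with $c_\varepsilon\to c$, and let $\varepsilon\to0$.
\end{itemize}
Making this reduction rigorous is where I expect the first technical difficulty, since the mixed terms must be controlled uniformly in $\gamma$.

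\textbf{Step 2: the K\"ahler case.} Let $\beta$ be a K\"ahler class.
\begin{itemize}
\item By Yau's theorem, pick a K\"ahler form $\omega_\beta\in\beta$ whose associated density $h:=n\,\omega_\beta\wedge\omega^{n-1}/\omega^n$ is suitably controlled, for instance with $\omega_\beta^n$ proportional to $\omega^n$. This is the ``background volume form adapted to the testing class.''
\item Take $f=-\tfrac12\varepsilon_{n,\theta}+t(h-\bar h)+A/V_\omega$, normalized so that $\int_M f\,\omega^n=A$ and $f>-\varepsilon_{n,\theta}$.
\item Proposition~\ref{prop:twisted-existence} then gives $\chi_u\in[\chi]\cap\Gamma_{\omega,\theta}$ with $G_n(\chi_u)=f\omega^n$.
\item Since $G_{n-1}(\chi_u)$ is closed and $\beta$ is closed, $\int_M G_{n-1}(\chi)\wedge\beta=\int_M G_{n-1}(\chi_u)\wedge\omega_\beta$.
\end{itemize}

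\textbf{Step 3: the pointwise inequality.} In eigenvalue coordinates,
\[
nG_{n-1}(\chi_u)\wedge\omega_\beta/\omega^n=\sum_i \partial_i F(\lambda)\,(\omega_\beta)_{i\bar i}, \qquad F(\lambda)=\Rea\prod_j(\lambda_j+\ii)-\cot\theta\,\Ima\prod_j(\lambda_j+\ii).
\]
On $\Gamma_{\omega,\theta}$ we have $\partial_iF>0$ by Lemma~\ref{lem:descending-positivity}.
\begin{itemize}
\item I would prove an algebraic inequality of the form $\partial_iF\geq (F+\kappa_{n,\theta}\csc^2\theta)\cdot\psi_i$. The weights $\psi_i$ should combine with the diagonal entries of $\omega_\beta$ through the arithmetic--geometric mean inequality.
\item The inequality would come from the angle representation $F=\sin(\theta-\sum\theta_j)/(\sin\theta\prod\sin\theta_j)$ and the concavity of $\log\sin$, exactly as in the proof of Proposition~\ref{prop:twisted-existence}. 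The constant $\kappa_{n,\theta}$ arises at the extremal configuration where $n-1$ angles equal $\theta/(n-1)$.
\item Integrating and applying Cauchy--Schwarz in the form $\bigl(\int\sqrt{gh}\bigr)^2\le\int g\int h$ against the choice of $f$ should produce $\int G_{n-1}\wedge\omega_\beta\geq \frac{A+\csc^2\theta\,\kappa_{n,\theta}V_\omega}{B}\int\omega^{n-1}\wedge\omega_\beta$.
\item The normalizing quantity $B$ enters through $\int G_1\wedge\omega^{n-1}$, via the trace identity for $\sum_i\partial_iF$. I expect this pointwise inequality, with the exact constant matching \eqref{eq:c}, to be the main obstacle. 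I would first test it in the model case $n=2$, where everything is explicit.
\end{itemize}

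\textbf{Step 4: extension to $\MN$.} For $\alpha\in\MK$, Proposition~\ref{prop:Boucksom-MK} gives a modification $\mu:\widetilde M\to M$ and a K\"ahler class $\widetilde\alpha$ with $\alpha=\mu_*\widetilde\alpha$. By the projection formula, $\int_M G_{n-1}(\chi)\wedge\alpha=\int_{\widetilde M}\mu^*G_{n-1}(\chi)\wedge\widetilde\alpha$, and similarly for $\omega^{n-1}$.
\begin{itemize}
\item Approximate $\widetilde\alpha$ by push-forwards of K\"ahler classes: $\mu^*\omega+\delta\widetilde\omega$ is K\"ahler on $\widetilde M$, and semistability pulls back along $\mu$ up to $\delta$-errors.
\item Alternatively, redo Steps 2--3 on $\widetilde M$ with the degenerate form $\mu^*\omega$ perturbed by $\delta\widetilde\omega$, keeping the constants uniform as $\delta\to0$.
\end{itemize}
Once the inequality holds on $\MK$, continuity and $\MN=\overline{\MK}$ finish the proof.
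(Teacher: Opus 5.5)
There is a genuine gap in Steps~2--3, exactly at the point you flag as the main obstacle: you adapt the wrong object to $\beta$. With the background $\omega$ fixed, expanding $G_n(\chi_u)=f\omega^n$ gives $(\lambda_i-\cot\theta)\,\partial_iF=f+\csc^2\theta\,T_i$ with $T_i=\Ima\prod_{j\neq i}(\lambda_j+\ii)\geq\kappa_{n,\theta}$. So the natural weights are $\psi_i=(\lambda_i-\cot\theta)^{-1}$, and $\sum_i\psi_i(\omega_\beta)_{i\bar i}=\tr_\xi\omega_\beta\geq h/\tr_\omega\xi$, where $\xi=\chi_u-\cot\theta\,\omega$ and $\int_M\tr_\omega\xi\,\omega^n=B$. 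This chain yields at best
\[
\int_MG_{n-1}(\chi)\wedge\beta\geq\frac{1}{nB}\Bigl(\int_M\sqrt{\bigl(f+\csc^2\theta\,\kappa_{n,\theta}\bigr)h}\;\omega^n\Bigr)^2 .
\]
Cauchy--Schwarz bounds the right-hand side from \emph{above} by $c\int_M\omega^{n-1}\wedge\beta$, with equality only when $f+\csc^2\theta\,\kappa_{n,\theta}$ is proportional to $h$. Reaching the constant $c$ therefore forces
\[
f=\bigl(A/V_\omega+\csc^2\theta\,\kappa_{n,\theta}\bigr)\,h/\bar h-\csc^2\theta\,\kappa_{n,\theta}.
\]
This is not a small perturbation $t(h-\bar h)$. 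It also violates $f>-\varepsilon_{n,\theta}=-\tfrac12\csc^2\theta\,\kappa_{n,\theta}$ wherever $h$ is well below its mean, e.g.\ where $h<\bar h/2$ if $A=0$. Nothing provides a K\"ahler representative of $\beta$ with $\tr_\omega\omega_\beta$ pinched like this; exact constancy would mean the $\omega$-harmonic representative is positive. Normalizing $\omega_\beta^n\propto\omega^n$ does not help either. AM--GM then only gives a lower bound by a multiple of $(\int_M\beta^n)^{1/n}$. By Khovanskii--Teissier this is at most a fixed multiple of $\int_M\omega^{n-1}\wedge\beta$, and it vanishes on nef non-big classes, so it cannot give \eqref{eq:modified-nef}. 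Separately, the $f$ you wrote integrates to $A-\tfrac12\varepsilon_{n,\theta}V_\omega$, not $A$.

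The missing idea is to adapt the \emph{background metric}, not the representative of $\beta$. Take any K\"ahler representative of $\beta$ and write $\omega^{n-1}\wedge\beta=b\,\omega^n$. Use Yau's theorem to find $\widehat\omega\in[\omega]$ with $\widehat\omega^n=\frac{V_\omega}{\int_M\omega^{n-1}\wedge\beta}\,b\,\omega^n$. Then solve $G_n(\chi_{t,u},\widehat\omega)=a_t\widehat\omega^n$ in $[\chi+t\omega]$ with the \emph{constant} $a_t=A_t/V_\omega>0$, so no lower bound on the right-hand side is at stake. Intersection numbers are unchanged since $[\widehat\omega]=[\omega]$, but the weight $(a_t+\csc^2\theta\,\kappa_{n,\theta})\widehat\omega^n$ is now exactly proportional to $b\,\omega^n$. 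Set $\xi_t=\chi_{t,u}-\cot\theta\,\widehat\omega$ and $s_t=\tr_\omega\xi_t$. The bounds $\tr_{\xi_t}\beta\geq nb/s_t$, $\int_Ms_t\,\omega^n=B_t$, and $\int_M\frac{b^2}{s_t}\,\omega^n\geq(\int_Mb\,\omega^n)^2/B_t$ give exactly $c_t=(A_t+\csc^2\theta\,\kappa_{n,\theta}V_\omega)/B_t\to c$. You still need $B>0$, which follows from the Hodge index inequality together with $\int_MG_2(\chi)\wedge\omega^{n-2}\geq0$. The proportional choice of $f$ in your fixed-background route would be admissible for every K\"ahler representative if the twisted equation were solvable for all $f>-\csc^2\theta\,\kappa_{n,\theta}$, but Proposition~\ref{prop:twisted-existence} only covers $f>-\tfrac12\csc^2\theta\,\kappa_{n,\theta}$.

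In Step~4, the claim that semistability pulls back to $\widetilde M$ up to $\delta$-errors is also unjustified. Subvarieties inside the exceptional locus of $\mu$ see only degenerate pulled-back classes. The paper instead pulls back a strict pointwise subsolution $\chi'_t\in[\chi+t\omega]\cap\Gamma_{\omega,\theta}$ and uses $G_p(\mu^*\chi'_t+\varepsilon_1\alpha,\mu^*\omega)=\sum_j\binom pj\varepsilon_1^j\alpha^j\wedge\mu^*G_{p-j}(\chi'_t)>0$ for $p\leq n-1$. It then perturbs the background to $\mu^*\omega+\varepsilon_2\alpha$ before invoking the K\"ahler case.
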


We first prove the estimate when the testing class $\beta$ is
K\"ahler. 

\begin{lemma}\label{lem:Kahler-testing}
Under the assumptions of Theorem~\ref{thm:modified-nef}, for every
K\"ahler class $\beta$, one has
\begin{equation}\label{eq:Kahler-testing}
\int_MG_{n-1}(\chi)\wedge\beta
\geq
c\int_M\omega^{n-1}\wedge\beta.
\end{equation}
\end{lemma}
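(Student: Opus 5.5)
The plan is to reduce Lemma~\ref{lem:Kahler-testing} to a pointwise inequality at a well-chosen representative of $[\chi]$, obtained by solving a twisted LYZ equation (Proposition~\ref{prop:twisted-existence}) whose right-hand side is adapted to $\beta$, and then integrate. The two sides of \eqref{eq:Kahler-testing} are cohomological, so we may evaluate them on any closed representatives. First I would remove the borderline nature of \eqref{eq:LYZ-subvariety-semistable} by perturbing $\chi$ to $\chi_\varepsilon:=\chi+\varepsilon\omega$. Since
\[
\frac{d}{dt}G_k(\chi+t\omega)=k\,G_{k-1}(\chi+t\omega)\wedge\omega ,
\]
an induction on $k$ starting from $G_0=1$ shows that all the subvariety inequalities become strict for $\chi_\varepsilon$. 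The Nakai--Moishezon criterion of Chu--Lee--Takahashi (or Chen's uniform version) then gives a representative of $[\chi_\varepsilon]$ in $\Gamma_{\omega,\theta}$. This is exactly the subsolution hypothesis of Proposition~\ref{prop:twisted-existence}. The constant $c$ in \eqref{eq:c} depends continuously on $A$ and $B$, so it suffices to prove the estimate for $\chi_\varepsilon$ and let $\varepsilon\to0$.

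Next, fix a Kähler form $\beta$ in the given class. By Yau's theorem~\cite{Yau1978} I may replace the background data by a Kähler form $\widetilde\beta\in[\beta]$ with a prescribed volume form. I would choose it so that the density
\[
h:=\frac{n\,\widetilde\beta\wedge\omega^{n-1}}{\omega^n}
\]
is adapted to the test class; the cleanest choice is to make $\widetilde\beta^n$ proportional to $\omega^n$. Then I solve $G_n(\chi_u)=f\,\omega^n$ with $\chi_u\in\Gamma_{\omega,\theta}$, where
\[
f:=s\,h-\csc^2\theta\,\kappa_{n,\theta},
\]
and the constant $s$ is fixed by the compatibility condition \eqref{eq:twisted-compatibility}. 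Integrating, $s$ is essentially $(A+\csc^2\theta\,\kappa_{n,\theta}V_\omega)$ divided by a $\beta$-quantity. This is where the numerator of $c$ comes from.

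One hypothesis needs care: Proposition~\ref{prop:twisted-existence} requires $f>-\varepsilon_{n,\theta}$. The constant term $\csc^2\theta\,\kappa_{n,\theta}$ may exceed $\varepsilon_{n,\theta}$, so this condition is not automatic. If it fails, I would run a continuity/rescaling argument in that constant, or split the constant between $f$ and a multiple of $\omega^n$ absorbed into the normalization. I expect the value of $\kappa_{n,\theta}$ in \eqref{eq:kappa} to be tuned precisely so that this works.

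The core step, and the main obstacle, is a pointwise algebraic inequality for $\lambda\in\Gamma_{\omega,\theta}$. Diagonalizing at a point, I would aim to show
\[
n\,G_{n-1}(\chi_u)\wedge\widetilde\beta\;\geq\;\bigl(f+\csc^2\theta\,\kappa_{n,\theta}\bigr)\cdot\Phi ,
\]
where $\Phi$ is a quantity controlled by $G_1(\chi_u)\wedge\omega^{n-1}$ and $\widetilde\beta\wedge\omega^{n-1}$. To prove it I would write $G_n=\sin(\theta-\sum\theta_i)/(\sin\theta\prod\sin\theta_i)$ and $G_{n-1}$ in the angle variables $\theta_i=\arccot\lambda_i$. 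The bound $\prod_{i\neq j}\sin\theta_i\le\sin^{n-1}(\theta/(n-1))$ from concavity of $\log\sin$, already used in the proof of Proposition~\ref{prop:twisted-existence}, should produce the $\kappa_{n,\theta}$ term.

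After integrating, the remaining mismatch between $\int h\,G_{n-1}$-type quantities and the product of $\int G_1\wedge\omega^{n-1}$ with $\int\omega^{n-1}\wedge\beta$ would be closed with Cauchy--Schwarz. Concretely, $\bigl(\int\sqrt{XY}\bigr)^2\le\int X\int Y$, and the Yau normalization of $\widetilde\beta$ is what should make $\int\widetilde\beta\wedge\omega^{n-1}$ appear linearly. If the exact pointwise form is off, I would first test the inequality in the case $n=2$ and for $\widetilde\beta=\omega$, where it must reduce to the definition of $c$, and adjust $f$ accordingly.
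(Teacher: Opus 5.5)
You have a genuine gap at the solvability step you flagged yourself, and neither suggested remedy closes it. The perturbation $\chi\mapsto\chi+\varepsilon\omega$ and the closing Cauchy--Schwarz are as in the paper. The problem is the constant. In the notation of the proof of Proposition~\ref{prop:twisted-existence},
\[
\csc^2\theta\,\kappa_{n,\theta}=\frac{1}{\sin\theta\,\sin^{n-1}\!\left(\theta/(n-1)\right)}=A_{n,\theta}=2\varepsilon_{n,\theta}.
\]
So your $f=sh-\csc^2\theta\,\kappa_{n,\theta}$ is only guaranteed to satisfy $f>-2\varepsilon_{n,\theta}$. The required bound $f>-\varepsilon_{n,\theta}$ needs $sh>\varepsilon_{n,\theta}$ everywhere, where $s\int_Mh\,\omega^n=A_\varepsilon+\csc^2\theta\,\kappa_{n,\theta}V_\omega$. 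In the boundary case $A_\varepsilon\to A=0$, this says that $h$ stays above roughly half its mean. Your normalization $\widetilde\beta^n\propto\omega^n$ does not give that, since it prescribes $\widetilde\beta^n$ and not $\operatorname{tr}_\omega\widetilde\beta$. Nor is $\kappa_{n,\theta}$ tuned to make this work: $-A_{n,\theta}$ is exactly the threshold below which $P_\omega$ can reach $\theta$, not the safety margin $-\varepsilon_{n,\theta}$. Splitting the constant, with $f=sh-\mu\csc^2\theta\,\kappa_{n,\theta}$ and $\mu<1/2$, restores solvability. But the argument then only yields $(A+\mu\csc^2\theta\,\kappa_{n,\theta}V_\omega)/B$, which is strictly smaller than the $c$ of the lemma.

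The missing idea is to put the $\beta$-weight into the background measure rather than into $f$. Apply Yau's theorem to $[\omega]$, not to $[\beta]$. With $b\,\omega^n=\omega^{n-1}\wedge\beta$, take $\widehat\omega\in[\omega]$ with $\widehat\omega^n=\frac{V_\omega}{\int_M\omega^{n-1}\wedge\beta}\,b\,\omega^n$. All intersection numbers are unchanged, so Chen's criterion gives a subsolution relative to $\widehat\omega$. You then solve $G_n(\chi_u,\widehat\omega)=a\,\widehat\omega^n$ with the positive constant $a=A_\varepsilon/V_\omega$, where Proposition~\ref{prop:twisted-existence} applies verbatim. The pointwise inequality you left open is the cofactor identity, with eigenvalues taken with respect to $\widehat\omega$:
\[
(\lambda_i-\cot\theta)\,S_i=a+\csc^2\theta\,T_i,\qquad T_i=\Ima\prod_{j\neq i}(\lambda_j+\ii)\geq\kappa_{n,\theta},
\]
where $S_i=\Rea\prod_{j\neq i}(\lambda_j+\ii)-\cot\theta\,\Ima\prod_{j\neq i}(\lambda_j+\ii)>0$. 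It gives $n\,G_{n-1}(\chi_u,\widehat\omega)\wedge\beta\geq(a+\csc^2\theta\,\kappa_{n,\theta})\operatorname{tr}_\xi\beta\,\widehat\omega^n$ with $\xi=\chi_u-\cot\theta\,\widehat\omega>0$, so your $\Phi$ is $\operatorname{tr}_\xi\beta$. Combining $\operatorname{tr}_\xi\beta\geq nb/\operatorname{tr}_\omega\xi$, $\int_M\operatorname{tr}_\omega\xi\,\omega^n=B_\varepsilon$ and Cauchy--Schwarz gives exactly $c_\varepsilon$.

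The same identity with your $f$ gives $(\lambda_i-\cot\theta)S_i\geq sh$. Your route would then yield exactly $c$ with no Yau step at all, provided the twisted equation is solvable for every $f>-A_{n,\theta}$. The proof of Proposition~\ref{prop:twisted-existence} seems to use only this weaker bound at the two places where $\varepsilon_{n,\theta}$ enters. That would be an alternative repair, but it has to be checked against the hypotheses of Lin's theorem rather than assumed.
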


\begin{proof}
We first verify that the denominator $B$ in~\eqref{eq:A-B} is strictly
positive. Since
\[
G_1(\chi)=\chi-\cot\theta\omega,
\qquad
G_2(\chi)
=
G_1(\chi)^2-\csc^2\theta\,\omega^2,
\]
the LYZ-semistability condition, applied with the K\"ahler class
$[\omega]$, gives
\begin{equation}\label{eq:G1-estimate}
\int_MG_1(\chi)\wedge\omega^{n-1}\geq0,
\end{equation}
and
\begin{equation}\label{eq:G1-2-estimate}
\int_MG_1(\chi)^2\wedge\omega^{n-2}
\geq
\csc^2\theta\int_M\omega^n
>0,
\end{equation}
where inequality~\eqref{eq:G1-2-estimate} follows directly from
$\displaystyle\int_MG_2(\chi)\wedge\omega^{n-2}\geq0$. By the Hodge index inequality
(see, e.g.,~\cite[Chapter~VI]{Demailly}),
\[
\left(
\int_MG_1(\chi)\wedge\omega^{n-1}
\right)^2
\geq
\left(
\int_MG_1(\chi)^2\wedge\omega^{n-2}
\right)
\left(
\int_M\omega^n
\right)
>0.
\]
Together with the inequality~\eqref{eq:G1-estimate}, this yields
\begin{equation}\label{eq:B-positive}
B
=
n\int_MG_1(\chi)\wedge\omega^{n-1}
>0.
\end{equation}

Choose a K\"ahler representative of the K\"ahler class $\beta$, still denoted by $\beta$.
For $t>0$, set
\[
\chi_t:=\chi+t\omega,
\qquad
A_t:=\int_MG_n(\chi_t),
\qquad
B_t:=n\int_MG_1(\chi_t)\wedge\omega^{n-1}.
\]
By the LYZ-semistability condition,
\[
A_t
=
\sum_{k=0}^n
\binom nk
t^{n-k}
\int_MG_k(\chi)\wedge\omega^{n-k}
\geq
t^n\int_M\omega^n
>0.
\]
Similarly, if $V\subset M$ is an irreducible $p$-dimensional analytic
subvariety and $1\leq k\leq p$, then
\[
\int_VG_k(\chi_t)\wedge\omega^{p-k}
=
\sum_{j=0}^k
\binom kj
t^{k-j}
\int_VG_j(\chi)\wedge\omega^{p-j}
\geq
t^k\int_V\omega^p
>0.
\]
Thus for each fixed $t>0$, we can apply Chen's uniform criterion~\cite{Chen} to
the entire test family $(\chi+t\omega)+s\omega$, $s\geq0$. It implies that
$[\chi_t]$ contains a strict $C$-subsolution of the LYZ equation.

Define the smooth
positive function $b$ by $b\omega^n=\omega^{n-1}\wedge\beta$.
By Yau's theorem~\cite{Yau1978}, there exists a K\"ahler form $\widehat\omega\in[\omega]$
satisfying 
\begin{equation}\label{eq:testing-MA}
\widehat\omega^n
=\frac{V_\omega}{\displaystyle\int_M \omega^{n-1}\wedge \beta}\,b\omega^n.
\end{equation}
For a background form $\alpha$, we write
$G_p(\eta,\alpha):=\Rea(\eta+\ii\alpha)^p
-\text{cot}\theta\Ima(\eta+\ii\alpha)^p$;
then $G_p(\eta)=G_p(\eta,\omega)$.
All numerical integrals above are unchanged when $\omega$ is replaced
by $\widehat\omega$. In particular,
$[\chi_t+s\widehat\omega]=[\chi+(t+s)\omega]$ for every $s\geq0$,
and the same uniform strict bounds hold for this test family with
background $\widehat\omega$. Applying the uniform criterion again,
we obtain a strict $C$-subsolution in $[\chi_t]$ relative to
$\widehat\omega$.

The constant $a_t:=A_t/V_\omega$ is positive and satisfies
$\int_Ma_t\widehat\omega^n=A_t$.
Proposition~\ref{prop:twisted-existence}, applied with background
$\widehat\omega$ and right-hand side $a_t$, therefore gives
$\chi_{t,u}\in[\chi_t]\cap\Gamma_{\widehat\omega,\theta}$ such that
\begin{equation}\label{eq:mass-concentration-equation}
G_n(\chi_{t,u},\widehat\omega)
=a_t\widehat\omega^n.
\end{equation}
Fix a point and choose a $\widehat\omega$-unitary frame in which
$\chi_{t,u}$ is diagonal, with eigenvalues $\lambda_1,\ldots,\lambda_n$.
For every $i$, let
\[
S_i:=\Rea\prod_{j\neq i}(\lambda_j+\ii)
-\text{cot}\theta\Ima\prod_{j\neq i}(\lambda_j+\ii),
\qquad
T_i:=\Ima\prod_{j\neq i}(\lambda_j+\ii).
\]
If $\beta_{i\bar i}$ are the diagonal entries of $\beta$ in this frame,
then
\begin{equation}\label{eq:Gn-1-coordinate}
\frac{G_{n-1}(\chi_{t,u},\widehat\omega)\wedge\beta}
{\widehat\omega^n}
=\frac1n\sum_{i=1}^nS_i\beta_{i\bar i},
\qquad S_i>0.
\end{equation}
Expanding~\eqref{eq:mass-concentration-equation} gives
\begin{equation}\label{eq:key-pointwise-identity}
(\lambda_i-\text{cot}\theta)S_i
=a_t+\csc^2\theta\,T_i.
\end{equation}
Write $\theta_j:=\arccot\lambda_j\in(0,\pi)$.
The cone condition gives
$S_i:=\sum_{j\neq i}\theta_j<\theta$ and
\begin{equation}\label{eq:imaginary-cofactor-lower}
T_i
=\frac{\sin S_i}{\prod_{j\neq i}\sin\theta_j}
\geq\frac{\sin\theta}
{\sin^{n-1}\!\left(\theta/(n-1)\right)}
=\kappa_{n,\theta}.
\end{equation}
Indeed, let $m=n-1$. If $m=1$, both $T_i$ and
$\kappa_{n,\theta}$ equal $1$. If $m\geq2$, concavity of
$\log\sin x$ on $(0,\pi)$ yields
$\prod_{j\neq i}\sin\theta_j\leq\sin^m(S_i/m)$.
Moreover, the function $h_m(x):=\sin x/\sin^m(x/m)$ is decreasing on
$(0,\pi)$, since
$h_m'(x)/h_m(x)=\cot x-\cot(x/m)<0$.
Thus $T_i\geq h_m(S_i)\geq h_m(\theta)$, as claimed.

For every $i$, 
$\theta_i<\theta$, and hence
\(
\lambda_i>\text{cot}\theta.
\)
Consequently, $\xi_t:=\chi_{t,u}-\text{cot}\theta\widehat\omega$ is a
K\"ahler form. By~\eqref{eq:key-pointwise-identity} and
\eqref{eq:imaginary-cofactor-lower},
\[
S_i\geq
\frac{a_t+\csc^2\theta\,\kappa_{n,\theta}}
{\lambda_i-\text{cot}\theta}.
\]
It follows from~\eqref{eq:Gn-1-coordinate} that
\begin{equation}\label{eq:testing-pointwise}
G_{n-1}(\chi_{t,u},\widehat\omega)\wedge\beta
\geq
\frac{a_t+\csc^2\theta\,\kappa_{n,\theta}}{n}
\operatorname{tr}_{\xi_t}\beta\,\widehat\omega^n.
\end{equation}

Set $s_t:=\operatorname{tr}_{\omega}\xi_t>0$.
Since $\beta\leq(\operatorname{tr}_{\xi_t}\beta)\xi_t$, we have
$nb=\operatorname{tr}_{\omega}\beta
\leq s_t\operatorname{tr}_{\xi_t}\beta$.
Also, cohomological invariance gives
\[
\int_Ms_t\omega^n
=n\int_M\xi_t\wedge\omega^{n-1}
=B_t.
\]
Using~\eqref{eq:testing-MA}, \eqref{eq:testing-pointwise}, and the
Cauchy--Schwarz inequality, we obtain
\begin{equation}\label{eq:Kahler-testing-t}
\begin{aligned}
\int_MG_{n-1}(\chi_t)\wedge\beta
=&\int_MG_{n-1}(\chi_{t,u},\widehat\omega)\wedge\beta\\
\geq&
\frac{A_t+\csc^2\theta\,\kappa_{n,\theta}V_\omega}{\displaystyle\int_M \omega^{n-1}\wedge \beta}
\int_M\frac{b^2}{s_t}\,\omega^n\\
\geq&
\frac{A_t+\csc^2\theta\,\kappa_{n,\theta}V_\omega}{\displaystyle\int_M \omega^{n-1}\wedge \beta}
\frac{\left(\int_Mb\omega^n\right)^2}
{\int_Ms_t\omega^n}\\
=&
\frac{A_t+\csc^2\theta\,\kappa_{n,\theta}V_\omega}{B_t}
\int_M\omega^{n-1}\wedge\beta.
\end{aligned}
\end{equation}
Since $A_t\to A$, $B_t\to B>0$, and
$[G_{n-1}(\chi_t)]\to[G_{n-1}(\chi)]$ as $t\to0^+$,
passing to the limit in~\eqref{eq:Kahler-testing-t} yields
\eqref{eq:Kahler-testing}.
\end{proof}

\begin{proof}[Proof of Theorem~\ref{thm:modified-nef}]
Since the modified nef cone is the closure of the modified K\"ahler
cone, and both sides of~\eqref{eq:modified-nef} depend continuously
on $\beta$, it suffices to prove the estimate when $\beta$ is a
modified K\"ahler class.

By Proposition~\ref{prop:Boucksom-MK}, there exist a modification
\[
\mu:\widetilde M\longrightarrow M
\]
and a K\"ahler class $\widetilde\beta$ on $\widetilde M$ such that
$
\beta=\mu_*\widetilde\beta.$
Choose a K\"ahler form $\alpha\in\widetilde\beta$.

Fix $t>0$. The estimates in the first part of the proof of
Lemma~\ref{lem:Kahler-testing} show that $[\chi_t]$ satisfies the
strict numerical stability conditions. Hence it contains a strict
$C$-subsolution, which we denote by $\chi'_t$:
\begin{equation}\label{eq:chi-t-prime}
\chi'_t\in[\chi_t]\cap\Gamma_{\omega,\theta}.
\end{equation}
By Lemma~\ref{lem:descending-positivity},
\begin{equation}\label{eq:Gp-chi-t-prime}
G_p(\chi'_t)>0,
\qquad
1\leq p\leq n-1.
\end{equation}

For $\varepsilon_1>0$ and $\varepsilon_2>0$, define two  forms
\begin{equation}\label{eq:tilde-forms}
\widetilde\chi_{t,\varepsilon_1}
:=
\mu^*\chi'_t+\varepsilon_1\alpha,
\qquad
\widetilde\omega_{\varepsilon_2}
:=
\mu^*\omega+\varepsilon_2\alpha.
\end{equation}
 on
$\widetilde M$.

First we claim that, for every fixed $t>0$ and $\varepsilon_1>0$, if
$\varepsilon_2>0$ is sufficiently small, the pair
\(
(\widetilde\chi_{t,\varepsilon_1},
\widetilde\omega_{\varepsilon_2})
\)
satisfies the semistale condition~\eqref{eq:LYZ-subvariety-semistable}. Indeed, first set $\varepsilon_2=0$. For
every $1\leq p\leq n-1$,
\begin{equation}\label{eq:tilde-G-expansion}
G_p(\widetilde\chi_{t,\varepsilon_1},\mu^*\omega)
=
\sum_{j=0}^p
\binom pj
\varepsilon_1^j\alpha^j
\wedge
\mu^*G_{p-j}(\chi'_t,\omega)>0.
\end{equation}
Here every summand is semipositive by~\eqref{eq:Gp-chi-t-prime}, and the
last summand is
\(
\varepsilon_1^p\alpha^p>0.
\)
By openness and compactness, the same strict positivity holds after
replacing $\mu^*\omega$ by $\widetilde\omega_{\varepsilon_2}$,
provided $\varepsilon_2>0$ is sufficiently small.
That is, \begin{equation}\label{eq:tilde-form-positive}
G_p(\widetilde\chi_{t,\varepsilon_1},\widetilde\omega_{\epsilon_2})>0, \quad 1\leq p\leq n-1.
\end{equation}

Moreover, the same expansion in top degree gives
\[
\begin{aligned}
\int_{\widetilde M}
G_n(\mu^*\chi'_t+\varepsilon_1\alpha,\mu^*\omega)=A_t+
\sum_{j=1}^n\binom nj\varepsilon_1^j
\int_{\widetilde M}\alpha^j\wedge
\mu^*G_{n-j}(\chi'_t,\omega)>0.
\end{aligned}
\]
 Indeed, every term in the sum is nonnegative, and $A_t>0$.
Thus, for each fixed $\varepsilon_1>0$, after decreasing
$\varepsilon_2>0$ if necessary, we also have
\begin{equation}\label{eq:tilde-top-positive}
\int_{\widetilde M}
G_n(\widetilde\chi_{t,\varepsilon_1},
\widetilde\omega_{\varepsilon_2})>0.
\end{equation}
Thus the claim follows from \eqref{eq:tilde-form-positive} and ~\eqref{eq:tilde-top-positive}.

Therefore we can apply Lemma~\ref{lem:Kahler-testing} on
$\widetilde M$ with the K\"ahler testing class
$\widetilde\beta=[\alpha]$. We obtain
\begin{equation}\label{eq:tilde-estimate}
\int_{\widetilde M}
G_{n-1}(
\widetilde\chi_{t,\varepsilon_1},
\widetilde\omega_{\varepsilon_2}
)
\wedge\alpha
\geq
c_{t,\varepsilon_1,\varepsilon_2}
\int_{\widetilde M}
\widetilde\omega_{\varepsilon_2}^{\,n-1}
\wedge\alpha,
\end{equation}
where
\begin{equation}\label{eq:tilde-c}
c_{t,\varepsilon_1,\varepsilon_2}
=
\frac{
\displaystyle
\int_{\widetilde M}
G_n(
\widetilde\chi_{t,\varepsilon_1},
\widetilde\omega_{\varepsilon_2}
)
+
\csc^2\theta\,\kappa_{n,\theta}
\int_{\widetilde M}
\widetilde\omega_{\varepsilon_2}^{\,n}
}{
\displaystyle
n
\int_{\widetilde M}
G_1(
\widetilde\chi_{t,\varepsilon_1},
\widetilde\omega_{\varepsilon_2}
)
\wedge
\widetilde\omega_{\varepsilon_2}^{\,n-1}
}.
\end{equation}

For fixed $\epsilon_1>0$, letting $\varepsilon_2\to0^+$, and then let $\varepsilon_1\to0^+$ in~\eqref{eq:tilde-estimate}, the
continuity of intersection numbers and the projection formula give
\begin{equation}\label{eq:modified-Kahler-limit}
\begin{aligned}
\int_MG_{n-1}(\chi_t)\wedge\beta
&=
\int_{\widetilde M}
\mu^*G_{n-1}(\chi_t)\wedge\alpha\\
&\geq
c_t\int_{\widetilde M}
(\mu^*\omega)^{n-1}\wedge\alpha=
c_t\int_M\omega^{n-1}\wedge\beta,
\end{aligned}
\end{equation}
where
\begin{equation}\label{eq:c-t}
c_t
=\frac{A_t+\csc^2\theta\,\kappa_{n,\theta}V_\omega}{B_t}.
\end{equation}
Finally, letting $t\to0^+$,
we obtain
\[
\int_MG_{n-1}(\chi)\wedge\beta
\geq
c\int_M\omega^{n-1}\wedge\beta.
\]
Thus the estimate holds for every modified K\"ahler class.
Since
\(
\MN=\overline{\MK}
\) on $M$, the desired estimate for all modified
nef classes follows by continuity of integral.
\end{proof}

With the estimate developed in Theorem~\ref{thm:modified-nef}, we now
prove the exceptional family part of Theorem~\ref{thm:main-modified-nef}. Recall the set
\begin{equation}\label{eq:S}
S_c
=
\left\{
D\,\middle|\,
D\text{ is a prime divisor and }
\int_DG_{n-1}(\chi)
<
c\int_D\omega^{n-1}
\right\}.
\end{equation}

\begin{theorem}\label{thm:exceptional-family}
Assume the semistability condition~\eqref{eq:LYZ-subvariety-semistable}. Let $c>0$
be the constant in~\eqref{eq:c}
Then  $S_c$ is an exceptional family. In particular,
$S_c$ is finite and
$
\# S_c\leq \rho(M),$ where $\rho(M)$ is the Picard number of $M$.
\end{theorem}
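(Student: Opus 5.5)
The plan is to argue by contradiction using the quantitative estimate of Theorem~\ref{thm:modified-nef}. We must show that the convex cone generated by the classes of divisors in $S_c$ meets $\MN$ only at $0$. The definition of an exceptional family refers to a finite family, so we first treat an arbitrary finite subfamily $D_1,\ldots,D_m\in S_c$. We then deduce finiteness afterwards.

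First step: let $D_1,\ldots,D_m\in S_c$ be distinct. Suppose $\beta=\sum_{i=1}^m b_i\{D_i\}\in\MN$ with $b_i\geq0$. By linearity of intersection numbers and the fact that integrating a closed form against $\{D_i\}$ is integration over $D_i$,
\[
\int_MG_{n-1}(\chi)\wedge\beta-c\int_M\omega^{n-1}\wedge\beta
=\sum_{i=1}^m b_i\left(\int_{D_i}G_{n-1}(\chi)-c\int_{D_i}\omega^{n-1}\right).
\]
Each bracket is strictly negative by definition of $S_c$. If some $b_i>0$, the right-hand side is therefore strictly negative. This contradicts \eqref{eq:modified-nef}, so all $b_i=0$. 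Hence $\beta=0$, and the only element of the cone lying in $\MN$ is $0$. Thus every finite subfamily of $S_c$ is exceptional.

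Second step (finiteness): by Boucksom's linear independence result \cite[Proposition~3.11]{Boucksom}, the classes of any finite exceptional family are linearly independent in $H^{1,1}_{\BC}(M,\R)$. More precisely, they are linearly independent in the real Néron--Severi-type subspace spanned by divisor classes, whose dimension is $\rho(M)$. Hence every finite subfamily of $S_c$ has cardinality at most $\rho(M)$. This forces $S_c$ itself to be finite with $\#S_c\leq\rho(M)$. By the first step, $S_c$ is then an exceptional family.

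The computation in the first step is routine; the potential subtlety lies only in the second step. There one must make sure that the linear independence is applied to finite subfamilies before finiteness is known. One must also check that the divisor classes lie in a subspace of dimension $\rho(M)$, since $H^{1,1}$ itself could be larger. I would handle the latter by noting that classes of divisors lie in $NS(M)\otimes\R$, which has dimension $\rho(M)$.
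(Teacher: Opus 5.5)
Your proposal is correct and follows the paper's proof. Like the paper, you apply Theorem~\ref{thm:modified-nef} to a nonnegative combination of classes of divisors in $S_c$ to show it is not modified nef unless it is zero, and then use Boucksom's linear independence for exceptional families to bound the cardinality of every finite subfamily by $\rho(M)$. Your remark that divisor classes lie in $NS(M)\otimes\mathbb{R}$, which has dimension $\rho(M)$, just makes explicit a step the paper leaves implicit.
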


\begin{proof}
Suppose \(D_1,\ldots,D_j\) is a family of prime divisors in $S$, and suppose the $(1,1)$-class
\[
\alpha
=
\sum_{i=1}^jx_i\{D_i\}
\neq0,
\qquad
x_i\geq0.
\]
By the definition of $S_c$,  we have
\begin{equation}\label{eq:alpha-not-nef}
\int_MG_{n-1}(\chi)\wedge\alpha
<
c\int_M\omega^{n-1}\wedge\alpha.
\end{equation}
By Theorem~\ref{thm:modified-nef}, the class $\alpha$ cannot be
modified nef. It follows that
\[
\Cone\{\{D_1\},\ldots,\{D_j\}\}\cap\MN
=
\{0\}.
\]
This proves that $D_1,\ldots,D_j$ is an
exceptional family in the sense of Boucksom.

By~\cite[Proposition~3.11(3)]{Boucksom}, the classes
\(
\{D_1\},\ldots,\{D_j\}
\)
are linearly independent in $H^{1,1}_{BC}(M,\mathbb{R})$. Hence $j$ is bounded by the Picard number of
$M$. Since the choice of family of prime divisors in  $S_c$ was arbitrary, $S$ itself is finite
and
\(
\#S\leq\rho(M).
\)
\end{proof}

The following consequence will be used later.

\begin{corollary}\label{cor:uniform-gap}
Assume the semistability condition~\eqref{eq:LYZ-subvariety-semistable}. Define the set
\begin{equation}\label{eq:Dest}
\Dest_{n-1}(\chi,\omega,\theta)
:=
\bigcup_{\substack{
D\subset M\ \text{is a prime divisor}\\
\int_DG_{n-1}(\chi)=0
}}
D.
\end{equation}
Then $\Dest_{n-1}(\chi,\omega,\theta)$ is an analytic subvariety of
$M$. Moreover, for any prime divisor $D$ satisfying $\displaystyle\int_DG_{n-1}(\chi)>0$, there exists a uniform constant $\delta>0$ independent of $D$, such that
\begin{equation}\label{eq:uniform-gap}
\int_DG_{n-1}(\chi)
\geq
\delta\int_D\omega^{n-1}.
\end{equation}
\end{corollary}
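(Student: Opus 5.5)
The plan is to read both assertions off Theorem~\ref{thm:exceptional-family}, which says that $S_c$ is a finite family of prime divisors.

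First, let $D$ be any prime divisor with $\int_DG_{n-1}(\chi)=0$. Since $c>0$ and $\int_D\omega^{n-1}>0$, we have
\[
\int_DG_{n-1}(\chi)=0<c\int_D\omega^{n-1},
\]
so $D\in S_c$. Hence the union defining $\Dest_{n-1}(\chi,\omega,\theta)$ runs over a subfamily of the finite set $S_c$. A finite union of irreducible analytic hypersurfaces is an analytic subvariety of $M$; if the union is empty, it is the empty subvariety. This settles the first assertion.

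Second, for the uniform gap I would split the prime divisors $D$ with $\int_DG_{n-1}(\chi)>0$ into two classes.
\begin{itemize}
\item If $D\notin S_c$, then by the definition \eqref{eq:S} of $S_c$ we have $\int_DG_{n-1}(\chi)\geq c\int_D\omega^{n-1}$.
\item If $D\in S_c$, there are only finitely many such divisors. For each one, the ratio
\[
r_D:=\frac{\int_DG_{n-1}(\chi)}{\int_D\omega^{n-1}}
\]
is strictly positive, because the numerator is positive by assumption and the denominator is positive since $\omega$ is K\"ahler.
\end{itemize}
Define
\[
\delta:=\min\Bigl(\{c\}\cup\{r_D : D\in S_c,\ \textstyle\int_DG_{n-1}(\chi)>0\}\Bigr).
\]
This is a minimum of finitely many positive numbers, so $\delta>0$. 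It depends only on $(M,\chi,\omega,\theta)$, through $c$ and the finite set $S_c$, and not on $D$. Then \eqref{eq:uniform-gap} holds in both classes.

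There is no real obstacle here. The only point needing care is that the finiteness of $S_c$ is what makes the minimum over the exceptional divisors positive; the whole argument uses only Theorem~\ref{thm:exceptional-family}.
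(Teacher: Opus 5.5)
Your proposal is correct and follows the paper's proof essentially verbatim. The paper also shows that the zero-integral divisors form a subset of the finite family $S_c$, and then takes $\delta$ to be the minimum of the finitely many positive ratios over $S_c^+$, with $c$ serving as the bound for divisors outside $S_c$. Your $\min(\{c\}\cup\{r_D\})$ is the same constant.
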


\begin{proof}
Let
\[
K
:=
\left\{
D:
D\text{ is a prime divisor and }
\int_DG_{n-1}(\chi)=0
\right\}.
\]
Then $K\subset S_c$. Hence by Theorem~\ref{thm:exceptional-family}, 
\(
K
\)
is finite, and then
\(
\Dest_{n-1}(\chi,\omega,\theta)=\bigcup_{D\in K}D\)
is an analytic subvariety.

To prove the existence of the uniform gap $\delta>0$, set
\[
S_c^+
:=
\left\{
D\in S_c:
\int_DG_{n-1}(\chi)>0
\right\}.
\]
If $S_c^+=\varnothing$, then every prime divisor $D$ with
$\int_DG_{n-1}(\chi)>0$ satisfies
\[
\int_DG_{n-1}(\chi)
\geq
c\int_D\omega^{n-1}.
\]
Thus we may take $\delta=c$. Suppose now that $S^+\neq\varnothing$. Since $S^+_c$ is finite, the
number
\[
b
:=
\min_{D\in S^+_c}
\frac{
\int_DG_{n-1}(\chi)
}{
\int_D\omega^{n-1}
}
\]
is strictly positive. Set
\(
\delta:=b>0.
\) Then $b\leq c$.
If $D\in S^+_c$, then
\[
\int_DG_{n-1}(\chi)
\geq
b\int_D\omega^{n-1}
\geq
\delta\int_D\omega^{n-1}.
\]
If $D\notin S_c^+$, then $D\notin S_c$, hence
\[
\int_DG_{n-1}(\chi)
\geq
c\int_D\omega^{n-1}
\geq
\delta\int_D\omega^{n-1}.
\]
This proves~\eqref{eq:uniform-gap}.
\end{proof}

\section{A Hodge inequality and negative semidefiniteness of an intersection form}~\label{sec:hodge}

Throughout the remainder of this paper, we assume that $(\chi,\omega)$
satisfies the semisubsolution condition. For real $(1,1)$-forms
$\alpha$ and $\beta$, write
\[
G_{p,\theta}(\alpha,\beta)
=\Rea(\alpha+\ii\beta)^p-\cot\theta\,\Ima(\alpha+\ii\beta)^p.
\]
We first prove the following Hodge inequality.

\begin{proposition}\label{prop:smooth-Hodge}
Let $(X^m,\eta)$ be a compact connected K\"ahler manifold, $m\ge2$,
and let $\theta\in(0,\pi)$. Suppose that $\alpha$ is a smooth closed
real $(1,1)$-form satisfying
\[
\alpha\in\Gamma_{\eta,\theta},\qquad
G_{m,\theta}(\alpha,\eta)\ge0.
\]
If $[\gamma]\in H^{1,1}(X,\R)$ and
$\int_XG_{m-1,\theta}(\alpha,\eta)\wedge\gamma=0$, then
\begin{equation}\label{eq:smooth-Hodge}
\int_XG_{m-2,\theta}(\alpha,\eta)\wedge\gamma^2\le0,
\end{equation}
with equality if and only if $[\gamma]=0$.
\end{proposition}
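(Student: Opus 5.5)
We plan to reduce Proposition~\ref{prop:smooth-Hodge} to a pointwise Hodge-type statement for a hyperbolic quadratic form, and then integrate using a harmonic-type representative of $[\gamma]$.

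\textbf{Step 1: a pointwise Lorentzian structure.} Set $\Omega_p:=G_{p,\theta}(\alpha,\eta)$. These are closed forms, since $\alpha$ and $\eta$ are closed. At each point, diagonalize $\alpha$ with respect to $\eta$ and put $\theta_j=\arccot\lambda_j$. Following the notation in the proof of Lemma~\ref{lem:Kahler-testing}, we have
\[
\Omega_{m-1}=\tfrac1m\sum_i S_i\,\eta^{m-1}_{\hat i}
\]
up to normalization, with cofactors $S_i>0$ by Lemma~\ref{lem:descending-positivity}. The coefficients of $\Omega_{m-2}$ are the second cofactors $S_{ij}$, which are also positive.

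The key pointwise claim is the following. For a real $(1,1)$-form $\gamma$ with $\Omega_{m-1}\wedge\gamma=0$ at the point, we want
\[
\Omega_{m-2}\wedge\gamma^2\le 0,
\]
with equality only if $\gamma=0$. This is the G\aa rding/Lorentzian property of the polarization of $G_m$ on the cone $\Gamma_{\eta,\theta}$. Equivalently, the symmetric matrix $(S_{ij})_{i\neq j}$ with zero diagonal has exactly one positive eigenvalue, and its positive eigendirection pairs positively with $(S_i)$. The off-diagonal terms $\gamma_{i\bar j}$ contribute $-S_{ij}|\gamma_{i\bar j}|^2\le0$ automatically, so only the diagonal part needs work.

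I would prove this by the standard hyperbolic-polynomial argument. The polynomial $\lambda\mapsto G_m$ (shifted by $\cot\theta$, as in Lin's standard form) is Noetherian/$\Upsilon$-stable on the cone, as verified in Proposition~\ref{prop:twisted-existence}. Alternatively, one can argue by induction on $m$ via restriction to hyperplanes, which is where Lemma~\ref{lem:descending-positivity} enters. The condition $G_m(\alpha,\eta)\ge0$ is what places $\alpha$ in the correct branch and makes the ``time-like'' direction $\alpha-\cot\theta\,\eta$ satisfy
\[
\Omega_{m-2}\wedge(\alpha-\cot\theta\,\eta)^2>0 .
\]
\textbf{This pointwise linear-algebra claim is the main obstacle}, and I expect it to require the most care at $m=2$, where $\Omega_0=1$. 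For $m=2$ it reduces directly to $\det$ of a $2\times2$ Hermitian form, which is immediate.

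\textbf{Step 2: globalization.} Following the classical proof of the Hodge index theorem for mixed forms, solve the equation
\[
\Omega_{m-1}\wedge(\gamma+\ii\partial\bar\partial f)=c\,\Omega_m',
\]
where $\Omega_m'$ is a fixed positive volume form (for instance $\Omega_{m-1}\wedge(\alpha-\cot\theta\,\eta)$). This is a linear elliptic equation, because $\Omega_{m-1}$ is a positive $(m-1,m-1)$-form and hence $f\mapsto \Omega_{m-1}\wedge\ii\partial\bar\partial f$ is an elliptic second-order operator with no zeroth-order term. The constant $c$ is forced to be $0$ by the hypothesis $\int\Omega_{m-1}\wedge\gamma=0$, and $X$ is connected, so Fredholm theory applies.

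Replacing $\gamma$ by $\gamma+\ii\partial\bar\partial f$ does not change $\int\Omega_{m-2}\wedge\gamma^2$, by closedness of $\Omega_{m-2}$ and Stokes. We may therefore assume $\Omega_{m-1}\wedge\gamma=0$ pointwise. Step 1 then gives $\Omega_{m-2}\wedge\gamma^2\le0$ pointwise, and integrating yields \eqref{eq:smooth-Hodge}.

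\textbf{Step 3: equality.} If the integral vanishes, then $\Omega_{m-2}\wedge\gamma^2=0$ everywhere. By the strict part of Step 1, the representative $\gamma+\ii\partial\bar\partial f$ vanishes identically, so $[\gamma]=0$. Conversely, $[\gamma]=0$ trivially gives equality.
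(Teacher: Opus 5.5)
Your Steps 2 and 3 match the paper's globalization and are fine. The paper solves the linear elliptic equation $L\wedge\ii\partial\bar\partial v=-L\wedge\gamma$ with $L=G_{m-1,\theta}(\alpha,\eta)$, integrates using closedness of $G_{m-2,\theta}(\alpha,\eta)$, and reads off the equality case. The gap is Step 1: it carries all the content, and you only assert it. In the paper's normalization, set $\theta_i=\arccot\lambda_i$, $\delta=\theta-\sum_i\theta_i$ and $x_i=\sin\theta_i H_{i\bar i}$. The cone condition gives $|\delta|<\pi$, and $G_{m,\theta}\ge0$ then gives $\delta\ge0$. What must be proved is that $\sum_{i<j}\sin(\delta+\theta_i+\theta_j)x_ix_j<0$ whenever $\sum_i\sin(\delta+\theta_i)x_i=0$ and $x\neq0$. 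This is not a formality. For $\delta>0$, the rank-two structure of $\sin(\delta+\theta_i+\theta_j)$ turns the form on the constraint hyperplane into $-\tfrac12\bigl[(\sum_i\sin\theta_i x_i)^2/\sin\delta+\sum_i\sin(\delta+2\theta_i)x_i^2\bigr]$, and $\sin(\delta+2\theta_1)<0$ when one eigenvalue is very negative. The paper needs a real case analysis here. For $\delta=0$ it uses $\sum_{j\ge2}\tan\theta_j\le\tan\bigl(\sum_{j\ge2}\theta_j\bigr)$ together with Cauchy--Schwarz. For $\delta>0$ it applies Cauchy--Schwarz over $j\ge2$ and shows a $2\times2$ form is positive via $\sin(\delta+\theta_1)\sin(\delta+T)-\sin\theta_1\sin T=\sin\delta\sin\theta$. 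So the delicate range is $m\ge3$; the case $m=2$ is just the Hodge index theorem for the K\"ahler form $\alpha-\cot\theta\,\eta$.

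The tools you point to do not supply this.
\begin{itemize}
\item Lin's right-Noetherian/$\Upsilon$-stability property concerns the roots of the one-variable polynomials $P_{m,\theta}$ and identifies the subsolution cone. It says nothing about the signature of the Hessian $(S_{ij})$ on $\{\sum_iS_ix_i=0\}$.
\item Your ``equivalently'' is not an equivalence. One positive eigenvalue whose eigenvector pairs positively with $b$ does not give negative definiteness on $b^\perp$: take $\operatorname{diag}(1,-1)$ and $b=(1,1)$. What you need is a vector $u$ with $Au$ proportional to $b$ and $u^{T}Au>0$.
\item Since $G_{m,\theta}$ is not homogeneous, even in $\hat\lambda=\lambda-\cot\theta$, there is no Euler identity, and $\alpha-\cot\theta\,\eta$ is not such a $u$ for $m\ge3$. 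For $m=3$ with $\{i,j,k\}=\{1,2,3\}$ and $S_{ii}=0$, one has $S_i=\hat\lambda_j\hat\lambda_k-\csc^2\theta$ but $\sum_lS_{il}\hat\lambda_l=2\hat\lambda_j\hat\lambda_k$.
\end{itemize}
A genuine hyperbolic-polynomial argument is possible but still incomplete. The polynomial $P(\lambda,s)=\Ima\bigl(e^{\ii\theta}\prod_j(\lambda_j-\ii s)\bigr)$ satisfies $P(\lambda,1)=\sin\theta\,G_{m,\theta}(\lambda)$ and is hyperbolic in the direction $(1,\ldots,1,0)$ by Hermite--Biehler. G\aa rding's concavity then gives strict negativity on the constraint hyperplane at interior points of the hyperbolicity cone. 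However, the $s=1$ slice of that cone is $\{\sum_i\arccot\lambda_i<\theta\}$, i.e.\ $\delta>0$. Wherever $G_{m,\theta}(\alpha,\eta)=0$ you are on the boundary of the cone, and limits give only $\le0$. Your Step 3 needs strictness there, for instance when $G_{m,\theta}(\alpha,\eta)\equiv0$, as for the forms in Lemma~\ref{lem:rotation}. So Step 1, including the boundary case $\delta=0$, still has to be proved.
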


\begin{proof}
Choose a smooth closed representative $\gamma$ and set
$L=G_{m-1,\theta}(\alpha,\eta)$. Since $L$ is positive and closed,
the operator $v\mapsto L\wedge\ii\partial\bar\partial v/\eta^m$
is elliptic and self-adjoint, with kernel consisting of constants.
Thus the integral assumption gives a smooth real function $v$ such that
\[
H:=\gamma+\ii\partial\bar\partial v,\qquad L\wedge H=0.
\]
At a fixed point, choose $\eta$-unitary coordinates in which
$\alpha=\ii\sum_i\lambda_i\,dz_i\wedge d\bar z_i$, and write
$H=\ii\sum_{i,j}H_{i\bar j}\,dz_i\wedge d\bar z_j$. Set
$\theta_i=\arccot\lambda_i$ and $\delta=\theta-\sum_i\theta_i$.
The cone condition gives $-\pi<\delta<\pi$, while
\[
\frac{G_{m,\theta}(\alpha,\eta)}{\eta^m}
=\frac{\sin\delta}{\sin\theta\prod_i\sin\theta_i}\ge0.
\]
Hence $\delta\ge0$ and $\delta+\sum_i\theta_i=\theta<\pi$.
Writing $x_i=\sin\theta_i\,H_{i\bar i}$, direct expansion gives
\begin{equation}\label{eq:trig-primitive}
\sum_i\sin(\delta+\theta_i)x_i=0
\end{equation}
and
\begin{equation}\label{eq:Hodge-pointwise}
\begin{aligned}
\frac{m(m-1)}2\sin\theta\prod_k\sin\theta_k\,
\frac{G_{m-2,\theta}(\alpha,\eta)\wedge H^2}{\eta^m}
={}&\sum_{i<j}\sin(\delta+\theta_i+\theta_j)x_ix_j\\
&-\sum_{i<j}\sin\theta_i\sin\theta_j
\sin(\delta+\theta_i+\theta_j)|H_{i\bar j}|^2.
\end{aligned}
\end{equation}
All coefficients in the second sum are positive, since
$0<\delta+\theta_i+\theta_j\le\theta<\pi$.
We show that the first sum is negative unless $x=0$.
Assume $\theta_1\ge\cdots\ge\theta_m$.

If $\delta=0$, then \eqref{eq:trig-primitive} yields
\[
\sum_{i<j}\sin(\theta_i+\theta_j)x_ix_j
=-\sum_i\sin\theta_i\cos\theta_i\,x_i^2.
\]
The assertion is immediate when $\theta_1\le\pi/2$.
Otherwise, $\sum_{j\ge2}\theta_j<\pi-\theta_1<\pi/2$, so
$\sum_{j\ge2}\tan\theta_j
\le\tan(\sum_{j\ge2}\theta_j)<-\tan\theta_1$.
By Cauchy--Schwarz and \eqref{eq:trig-primitive},
\[
\sum_{j\ge2}\sin\theta_j\cos\theta_j\,x_j^2
\ge\frac{\sin^2\theta_1\,x_1^2}{\sum_{j\ge2}\tan\theta_j}
>-\sin\theta_1\cos\theta_1\,x_1^2
\]
if $x_1\ne0$; if $x_1=0$, strict positivity holds unless $x=0$.

If $\delta>0$, then \eqref{eq:trig-primitive} gives
\begin{equation}\label{eq:trig-quadratic}
2\sum_{i<j}\sin(\delta+\theta_i+\theta_j)x_ix_j
=-\frac{(\sum_i\sin\theta_i\,x_i)^2}{\sin\delta}
-\sum_i\sin(\delta+2\theta_i)x_i^2.
\end{equation}
The assertion follows unless $\sin(\delta+2\theta_1)<0$.
In this case, set $T=\sum_{j\ge2}\theta_j$; then
$0<T<(\pi-\delta)/2$. Since $\cot(2\theta_j)\ge\cot(2T)$
and $\sum_{j\ge2}\tan\theta_j\le\tan T$, we have
\[
\sum_{j\ge2}\frac{\sin^2\theta_j}{\sin(\delta+2\theta_j)}
\le\frac{\sum_{j\ge2}\tan\theta_j}
{2(\cos\delta+\sin\delta\cot(2T))}
\le\frac{\sin^2T}{\sin(\delta+2T)}.
\]
Writing $z=\sum_{j\ge2}\sin\theta_j\,x_j$, Cauchy--Schwarz gives
\[
\sum_{j\ge2}\sin(\delta+2\theta_j)x_j^2
\ge\frac{\sin(\delta+2T)}{\sin^2T}\,z^2.
\]
Consequently, the negative of the right-hand side of
\eqref{eq:trig-quadratic} is at least
\[
\frac1{\sin\delta}\left(
\sin^2(\delta+\theta_1)x_1^2+2\sin\theta_1\,x_1z
+\frac{\sin^2(\delta+T)}{\sin^2T}\,z^2\right).
\]
This quadratic form is positive definite, because
\[
\sin(\delta+\theta_1)\sin(\delta+T)-\sin\theta_1\sin T
=\sin\delta\,\sin\theta>0.
\]
If $x_1=z=0$, \eqref{eq:trig-quadratic} is strictly negative
unless all remaining $x_j$ vanish. This proves the assertion.

It follows from \eqref{eq:Hodge-pointwise} that
$G_{m-2,\theta}(\alpha,\eta)\wedge H^2\le0$, with equality
exactly when $H=0$. Since $G_{m-2,\theta}(\alpha,\eta)$ is closed,
\[
\int_XG_{m-2,\theta}(\alpha,\eta)\wedge\gamma^2
=\int_XG_{m-2,\theta}(\alpha,\eta)\wedge H^2\le0.
\]
Equality therefore implies $[\gamma]=[H]=0$, and the converse
follows from Stokes' theorem.
\end{proof}

We next consider the boundary case.
\begin{lemma}\label{lem:rotation}
Assume $\int_MG_n(\chi)=0$ and
$\chi\in\overline\Gamma_{\omega,\theta}$.  For 
small enough $\varepsilon>0$, denote 
\begin{equation}\label{eq:rotation}
\chi_\varepsilon=\cos\varepsilon\,\chi-\sin\varepsilon\,\omega,
 \quad \omega_\varepsilon=\sin\varepsilon\,\chi+
 \cos\varepsilon\,\omega,
 \quad \theta_\varepsilon=\theta+n\varepsilon.
\end{equation}
Then $\omega_\varepsilon$ is K\"ahler,
$\chi_\varepsilon\in\Gamma_{\omega_\varepsilon,\theta_\varepsilon}$,
and $[\chi_\varepsilon]$ contains a smooth form $\Omega_\varepsilon$
such that
$$G_{n,{\theta_\varepsilon}}(\Omega_\varepsilon,
\omega_\varepsilon)=0.$$
\end{lemma}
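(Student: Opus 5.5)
The proof rests on one algebraic identity: the rotation multiplies $\chi+\ii\omega$ by a unit complex number. Indeed,
\[
\chi_\varepsilon+\ii\omega_\varepsilon
=(\cos\varepsilon+\ii\sin\varepsilon)(\chi+\ii\omega)
=e^{\ii\varepsilon}(\chi+\ii\omega).
\]
Hence $(\chi_\varepsilon+\ii\omega_\varepsilon)^n=e^{\ii n\varepsilon}(\chi+\ii\omega)^n$, and the same holds for any representative $\chi+\ii\partial\bar\partial u$ in place of $\chi$. I would carry out three steps in order.

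\textbf{Step 1: $\omega_\varepsilon$ is K\"ahler.} By compactness $\chi\ge -C\omega$ for some $C$. Then
\[
\omega_\varepsilon\ge(\cos\varepsilon-C\sin\varepsilon)\,\omega>0
\]
for small $\varepsilon$, and $\omega_\varepsilon$ is closed.

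\textbf{Step 2: the cone condition.} Pointwise, diagonalize $\chi$ with respect to $\omega$, with eigenvalues $\lambda_j=\cot\theta_j$. In the same frame $\omega_\varepsilon$ is diagonal with entries $\sin\varepsilon\lambda_j+\cos\varepsilon$, so the eigenvalue of $\chi_\varepsilon$ relative to $\omega_\varepsilon$ is
\[
\frac{\cos\varepsilon\cot\theta_j-\sin\varepsilon}{\sin\varepsilon\cot\theta_j+\cos\varepsilon}
=\cot(\theta_j+\varepsilon).
\]
This requires $\theta_j+\varepsilon<\pi$, which holds uniformly for small $\varepsilon$ because $\theta_j\le\theta<\pi$ for all $j$ when $P_\omega(\chi)\le\theta$ and $n\ge2$. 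Therefore
\[
P_{\omega_\varepsilon}(\chi_\varepsilon)=P_\omega(\chi)+(n-1)\varepsilon\le\theta+(n-1)\varepsilon<\theta_\varepsilon,
\]
so $\chi_\varepsilon\in\Gamma_{\omega_\varepsilon,\theta_\varepsilon}$. One must also have $\theta_\varepsilon<\pi$, which again holds for $\varepsilon$ small.

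\textbf{Step 3: existence of $\Omega_\varepsilon$.} Write
\[
G_{n,\theta}(\eta,\omega)=\frac{\Ima\bigl(e^{-\ii\theta}(\eta+\ii\omega)^n\bigr)}{-\sin\theta}
\]
(up to sign convention). The rotation identity then gives, at the level of forms,
\[
G_{n,\theta_\varepsilon}(\chi_\varepsilon,\omega_\varepsilon)=\frac{\sin\theta}{\sin\theta_\varepsilon}\,G_{n,\theta}(\chi,\omega),
\]
and so
\[
\int_M G_{n,\theta_\varepsilon}(\chi_\varepsilon,\omega_\varepsilon)=\frac{\sin\theta}{\sin\theta_\varepsilon}\int_M G_n(\chi)=0.
\]
Thus the compatibility condition \eqref{eq:twisted-compatibility} holds with $f\equiv0$. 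Proposition~\ref{prop:twisted-existence} then applies with background $\omega_\varepsilon$, phase $\theta_\varepsilon$, subsolution $\chi_\varepsilon$ itself, and $f=0>-\varepsilon_{n,\theta_\varepsilon}$. It yields $\Omega_\varepsilon\in[\chi_\varepsilon]\cap\Gamma_{\omega_\varepsilon,\theta_\varepsilon}$ with $G_{n,\theta_\varepsilon}(\Omega_\varepsilon,\omega_\varepsilon)=0$.

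\textbf{Main obstacle.} The delicate part is Step 2: verifying that the eigenvalue rotation $\theta_j\mapsto\theta_j+\varepsilon$ is valid uniformly on $M$, and that the boundary inequality $P\le\theta$ becomes strict. The strictness comes from the extra margin $\varepsilon=\theta_\varepsilon-\theta-(n-1)\varepsilon$. The rest is bookkeeping of the phase identity.
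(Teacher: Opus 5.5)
Your proposal is correct and follows the paper's proof. Both use the rotation identity $\chi_\varepsilon+\ii\omega_\varepsilon=e^{\ii\varepsilon}(\chi+\ii\omega)$, the eigenvalue shift $\theta_j\mapsto\theta_j+\varepsilon$ giving $P_{\omega_\varepsilon}(\chi_\varepsilon)\le\theta+(n-1)\varepsilon<\theta_\varepsilon$, and then Lin's existence result via Proposition~\ref{prop:twisted-existence} with $f=0$. Your explicit identity $G_{n,\theta_\varepsilon}(\chi_\varepsilon,\omega_\varepsilon)=\frac{\sin\theta}{\sin\theta_\varepsilon}G_{n,\theta}(\chi,\omega)$, which gives the compatibility condition, and your check that $\theta_j\le\theta<\pi$ make precise what the paper leaves implicit in its ``principal argument'' remark.
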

\begin{proof}
For small enough $\varepsilon$, $\omega_{\varepsilon}$ is K\"ahler.
Since $\chi_\varepsilon+\ii\omega_\varepsilon
=e^{\ii\varepsilon}(\chi+\ii\omega)$,  $\theta_{\varepsilon}$ is the principal argument of $\int_{M}(\chi_{\varepsilon}+\sqrt{-1}\omega_{\varepsilon})^n$.

At any point, choose holomorphic coordinates such that
\[
\omega=\sqrt{-1}\sum_i dz^i\wedge d\bar z^i,
\qquad
\chi=\sqrt{-1}\sum_i\lambda_i\,dz^i\wedge d\bar z^i.
\]
Then the eigenvalues of $\chi_\varepsilon$ with respect to
$\omega_\varepsilon$ are
\[
\lambda_i^\varepsilon
=
\frac{\lambda_i\cos\varepsilon-\sin\varepsilon}
{\lambda_i\sin\varepsilon+\cos\varepsilon}
=
\cot(\theta_i+\varepsilon),
\qquad
\theta_i=\arccot\lambda_i.
\]
Hence $\theta_i^\varepsilon=\theta_i+\varepsilon$. Since
$\chi\in\overline{\Gamma}_{\omega,\theta}$, for any $1\le i\le n$,
\[
\sum_{j\ne i}\theta_j^\varepsilon
\le
\theta+(n-1)\varepsilon
<
\theta+n\varepsilon
=
\theta_\varepsilon,
\]
and thus
$
\chi_\varepsilon\in
\Gamma_{\omega_\varepsilon,\theta_\varepsilon}.$ By Lin's result, there exists a smooth function $u^{\varepsilon}$ such that $\chi_{\varepsilon, u_{\varepsilon}}$ solving the perturbed supercritical  LYZ equation.
\end{proof}

\begin{theorem}\label{thm:seminegativity}
Assume $\int_MG_n(\chi)=0$ and
$\chi\in\overline\Gamma_{\omega,\theta}$.  Define
\[
 L(e)=[G_{n-1}(\chi)]\cdot e,\qquad
 Q(e,f)=[G_{n-2}(\chi)]\cdot e\cdot f,
\]
for $e,f\in H^{1,1}(M,\mathbb{R})$.
Then $Q\le0$ on $\ker L$.  In particular, if
$D_1,\ldots,D_N$ are the destabilizing prime divisors, the matrix
\begin{equation}\label{eq:Q-matrix}
 Q=(q_{ij}),\qquad q_{ij}=[G_{n-2}(\chi)]\cdot D_i\cdot D_j,
\end{equation}
is negative semidefinite and $q_{ij}\ge0$ for $i\ne j$.
\end{theorem}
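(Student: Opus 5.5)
The plan is to approximate the boundary data by the rotated data of Lemma~\ref{lem:rotation}, apply Proposition~\ref{prop:smooth-Hodge} for each $\varepsilon>0$, and pass to the limit $\varepsilon\to0^+$. The entries of the matrix are then handled separately.

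\textbf{Setup.} For small $\varepsilon>0$, Lemma~\ref{lem:rotation} gives a K\"ahler form $\omega_\varepsilon$ and a representative $\Omega_\varepsilon\in[\chi_\varepsilon]$ with $G_{n,\theta_\varepsilon}(\Omega_\varepsilon,\omega_\varepsilon)=0$. I also need $\Omega_\varepsilon\in\Gamma_{\omega_\varepsilon,\theta_\varepsilon}$. This should follow from Lin's theorem as in the last step of the proof of Proposition~\ref{prop:twisted-existence}. At a minimum point of the potential difference the cone condition holds. The cone boundary $P=\theta_\varepsilon$ cannot be reached, since there the equation would force $G_n<0$. So Proposition~\ref{prop:smooth-Hodge} applies with $X=M$, $\eta=\omega_\varepsilon$, $\alpha=\Omega_\varepsilon$ and phase $\theta_\varepsilon$.

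\textbf{Cohomological limits.} Write $L_\varepsilon(e)=[G_{n-1,\theta_\varepsilon}(\chi_\varepsilon,\omega_\varepsilon)]\cdot e$ and $Q_\varepsilon(e,f)=[G_{n-2,\theta_\varepsilon}(\chi_\varepsilon,\omega_\varepsilon)]\cdot e\cdot f$. These classes depend only on $[\chi_\varepsilon]$ and $[\omega_\varepsilon]$. Using $\chi_\varepsilon+\ii\omega_\varepsilon=e^{\ii\varepsilon}(\chi+\ii\omega)$, they are polynomial in $\varepsilon$ and converge to $L$ and $Q$ as $\varepsilon\to0$.

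\textbf{Passing to the limit on $\ker L$.} Fix $e\in\ker L$. Since $G_{n-1}(\chi)\ge0$ is a nonzero closed positive form, there is a K\"ahler class $h$ with $L(h)>0$; for small $\varepsilon$ the form $L_\varepsilon$ is also nonzero. Set
\[
 e_\varepsilon=e-\frac{L_\varepsilon(e)}{L_\varepsilon(h)}\,h,
\]
which lies in $\ker L_\varepsilon$ and satisfies $e_\varepsilon\to e$, because $L_\varepsilon(h)\to L(h)>0$ and $L_\varepsilon(e)\to L(e)=0$. Proposition~\ref{prop:smooth-Hodge} gives $Q_\varepsilon(e_\varepsilon,e_\varepsilon)\le0$, and letting $\varepsilon\to0$ yields $Q(e,e)\le0$. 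The only delicate point is this existence of $h$, together with the convergence of the normalization.

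\textbf{The matrix.} Each destabilizing divisor satisfies $L(\{D_i\})=\int_{D_i}G_{n-1}(\chi)=0$, so $\Span\{\{D_i\}\}\subset\ker L$. Hence the matrix $(q_{ij})$ is negative semidefinite.

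For $i\ne j$, the intersection $D_i\cap D_j$ has pure dimension $n-2$ or is empty. Then $q_{ij}=\int_{D_i\cap D_j}G_{n-2}(\chi)$, counted with intersection multiplicities, which are positive. By Lemma~\ref{lem:descending-positivity}, $G_{n-2}(\chi)$ restricted to any $(n-2)$-dimensional subspace is positive when $n-2\ge1$, so $q_{ij}\ge0$. When $n=2$, $G_0=1$ and $q_{ij}=D_i\cdot D_j\ge0$.

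To justify the integral formula, I would compute $\int_{D_i}G_{n-2}(\chi)\wedge[D_j]$ via the restriction of $[D_j]$ to $D_i$, or equivalently via the pullback to a resolution of $D_i$. There $\{D_j\}$ restricts to an effective divisor class, and the restriction of $G_{n-2}(\chi)$ is a positive form, giving a nonnegative pairing. I expect this justification to be routine.
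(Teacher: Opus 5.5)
Your proposal is correct and follows the paper's proof essentially step for step. You rotate via Lemma~\ref{lem:rotation}, subtract from $e$ a multiple of a fixed class $h$ to land in the kernel of the perturbed linear form, apply Proposition~\ref{prop:smooth-Hodge}, and let $\varepsilon\to0$; you then handle the off-diagonal entries with Lemma~\ref{lem:descending-positivity} and the effectivity of $D_i\cdot D_j$. Your check that $\Omega_\varepsilon\in\Gamma_{\omega_\varepsilon,\theta_\varepsilon}$, i.e.\ Proposition~\ref{prop:twisted-existence} with $f=0$, is a welcome addition.

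You do leave one step unjustified: the claim $G_{n-1}(\chi)\not\equiv0$ that gives $L(h)>0$. The paper obtains this from Theorem~\ref{thm:modified-nef}, namely $L([\omega])\ge c\int_M\omega^n>0$. An elementary alternative also works. If $G_{n-1}(\chi)$ vanished identically, every partial phase would equal $\theta$, which forces $Q_\omega(\chi)=n\theta/(n-1)$. Then $G_n(\chi)<0$ everywhere, contradicting $\int_MG_n(\chi)=0$.
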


\begin{proof}
Since $\chi\in\overline\Gamma_{\omega,\theta}$, $(\chi,\omega)$ satisfies the semistability condition~\eqref{eq:LYZ-subvariety-semistable}, then by
Theorem~\ref{thm:modified-nef}, we have $L([\omega])>0$.  For
$e\in\ker L$, let $\chi_{\varepsilon,u_{\varepsilon}}$ be as above and denote by
\[
 b_\varepsilon=
 \frac{[G_{n-1,\theta_\varepsilon}(\chi_{\varepsilon,u_{\varepsilon}},
 \omega_\varepsilon)]\cdot e}
 {[G_{{n-1},{\theta_\varepsilon}}(\chi_{\varepsilon,u_{\varepsilon}},
 \omega_\varepsilon)]\cdot[\omega]}.
\]
  We have
$b_\varepsilon\to0$, and by definition
\[
 [G_{n-1,\theta_\varepsilon}(\chi_{\varepsilon,u_{\varepsilon}},
 \omega_\varepsilon)]\cdot(e-b_\varepsilon[\omega])=0.
\]
Proposition~\ref{prop:smooth-Hodge}, applied to
$e-b_\varepsilon[\omega]$, gives a nonpositive second intersection
for every $\varepsilon$.  
Passing to the limit gives $Q(e,e)\le0$.  Since each $\{D_i\}\in
\ker L$, so the 
matrix is negative semidefinite.
At last, since $G_{n-2}(\chi)>0$ in the strong sense, and the intersection
of two distinct prime divisors is an effective codimension two
cycle, hence $q_{ij}\ge0$ for $i\ne j$.
\end{proof}

\section{Divisorial rigidity and negative definiteness}~\label{sec:negative-definiteness}

Assume $(\chi,\omega)$ satisfies the semisubsolution condition.  
Recall $D_1,\ldots,D_N$ be all destabilizing prime divisors. They form a
finite exceptional family.
Recall that 
\begin{equation}\label{eq:section5-LQ}
  L(e)=[G_{n-1}(\chi)]\cdot e,
  \qquad
  Q(e,f)=[G_{n-2}(\chi)]\cdot e\cdot f.
\end{equation} for $e,f\in H^{1,1}(M,\mathbb{R})$.
By Theorem~\ref{thm:seminegativity}, $Q$ is seminegative definite on $\ker L$. 

We prove that $Q$ is negative definite on $\operatorname{Span}_{\mathbb{R}}\{\{D_1\},\ldots,\{D_N\}\}\subset\ker L$. When $M$ is a K\"ahler surface, this follows from usual Hodge index theorem. So  we assume $n\geq 3$ in this section.

\subsection{Hodge LYZ inequality on the resolution}
We first extend the equality case of
Proposition~\ref{prop:smooth-Hodge} to possibly singular prime divisors.  Motivated by~\cite{Liu2026Boundary}, we solve a twisted LYZ equation on the resolution. Here we establish a weighted
Hodge integral estimate, and we don't need to prove the higher order estimate of the solution to the equation on the resolution.

For each destabilizing prime divisor $D$, we work under the following setup.
\begin{setup}\label{setup:resolution}
Let $D$ be a destabilizing prime divisor, and let
\[
 h\colon Y=D^\nu\longrightarrow D\hookrightarrow M
\]
be the normalization map followed by the inclusion.
Choose a projective log resolution $f\colon X\to Y$
which is an isomorphism over $Y_{\mathrm{reg}}$ and such that
\( E:=\bigl(f^{-1}(Y_{\mathrm{sing}})\bigr)_{\mathrm{red}}
\)
is a simple normal crossings (SNC) divisor.
Fix a K\"ahler form $\kappa$ on $X$, and let
\[
 m:=n-1,\qquad
 g:=h\circ f,\qquad
 A_0:=g^*\chi,\qquad
 B_0:=g^*\omega.
\]
\end{setup}

\begin{lemma}
\label{lem:restriction-regularisation}
There exist constants $\ell<K$ such
that, for
\begin{equation}\label{eq:explicit-regularisation}
  A_t=A_0+Kt\kappa,
  \qquad
  B_t=B_0+t\kappa,
  \qquad t>0,
\end{equation}
$B_t$ is Kähler, the eigenvalues of $A_t$ with respect to $B_t$ lie in $[\ell,K]$, and
\begin{equation}\label{eq:regularised-phase-bounds}
  Q_{B_t}(A_t)\leq\theta,
  \qquad
  P_{B_t}(A_t)\leq\theta-\operatorname{arccot}K<\theta.
\end{equation}
Consequently,
$G_{m,\theta}(A_t,B_t)=b_tB_t^m$ for a smooth function $b_t\geq0$, and there
is a constant $c>0$, independent of $t>0$, such that every real
$(1,1)$-form $H$ satisfies
\begin{equation}\label{eq:Bt-weighted-coercivity}
  G_{m-1,\theta}(A_t,B_t)\wedge H=0
  \quad\Longrightarrow\quad
  -G_{m-2,\theta}(A_t,B_t)\wedge H^2
  \geq c|H|_{B_t}^2B_t^m.
\end{equation}
\end{lemma}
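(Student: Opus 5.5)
The plan is to obtain all three assertions pointwise from the semisubsolution condition \eqref{eq:semisubsolution}. The lower-dimensional restriction is handled by Cauchy interlacing, the degeneration of $B_0$ along $E$ by the choice $K>C_0$, and the coercivity \eqref{eq:Bt-weighted-coercivity} by a compactness version of the pointwise analysis in Proposition~\ref{prop:smooth-Hodge}.

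\emph{Eigenvalue bounds.} Since $\chi$ is smooth on compact $M$, there is $C_0>0$ with $-C_0\omega\le\chi\le C_0\omega$. Pulling back gives $-C_0B_0\le A_0\le C_0B_0$ on $X$, including along $E$ where $B_0$ degenerates. Take $\ell:=-C_0$ and any $K>C_0$, with $K$ large and to be fixed below. Then
\[
A_t-KB_t=A_0-KB_0\le0,\qquad A_t-\ell B_t=(A_0+C_0B_0)+(K+C_0)t\kappa\geq0 .
\]
Since $B_t\ge t\kappa$ is K\"ahler, the eigenvalues of $A_t$ with respect to $B_t$ lie in $[\ell,K]$. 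In particular every angle $\theta_i^t=\arccot\lambda_i^t$ lies in the compact interval $[\arccot K,\arccot\ell]\subset(0,\pi)$, uniformly in $t$ and in the point.

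\emph{Phase bounds.} At a point of $Y_{\mathrm{reg}}$ where $dg$ is injective, $A_0$ and $B_0$ are the restrictions of $\chi$ and $\omega$ to an $(n-1)$-dimensional subspace $V\subset T_xM$. Cauchy interlacing gives $\mu_i\ge\lambda_{i+1}$, where the $\lambda$'s are ordered decreasingly. Hence
\[
Q_{B_0}(A_0)\le\sum_{j\ge2}\arccot\lambda_j\le P_\omega(\chi)\le\theta .
\]
For $t>0$ the difficulty is that $A_t$ and $B_t$ are perturbed simultaneously. Writing $A_t+\ii B_t=(A_0+\ii B_0)+t(K+\ii)\kappa$, the new summand has all angles equal to $\arccot K$, which is close to $0$.

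The route I would try first is to prove that
\[
\mathcal C_\theta=\{(A,B)\,:\,B>0,\ Q_B(A)\le\theta\}
\]
is a convex cone. It is invariant under positive scaling, so it suffices to show closure under addition. For fixed $B$, the set $\{\sum\arctan\lambda_i\ge m\pi/2-\theta\}$ is convex because $m\pi/2-\theta>(m-2)\pi/2$. To pass to varying $B$, I would exploit the rotation invariance used in Lemma~\ref{lem:rotation}: $e^{\ii s}(A+\ii B)$ shifts all angles by $s$. Equivalently, one can use that the relevant sublevel set is the hyperbolicity cone of $\Ima\bigl(e^{-\ii\theta}(A+\ii B)^m\bigr)$ in the pair $(A,B)$.

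Granting this, apply closedness of the cone to the approximations $(A_0+\epsilon\kappa',B_0+\epsilon\kappa)$ and add $(Kt\kappa,t\kappa)$, which lies in $\mathcal C_\theta$ for $K$ large. This gives $Q_{B_t}(A_t)\le\theta$. For $P$, drop the largest angle: $P_{B_t}(A_t)=Q_{B_t}(A_t)-\max_i\theta_i^t\le\theta-\arccot K$, because every angle is at least $\arccot K$. Then $\delta:=\theta-Q\ge0$ gives $b_t\ge0$ via the formula $G_m/B_t^m=\sin\delta/(\sin\theta\prod\sin\theta_i)$. I expect this step, namely the joint convexity and the behavior at degenerate points of $B_0$, to be the main obstacle.

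\emph{Uniform coercivity.} In $B_t$-unitary coordinates diagonalizing $A_t$, the computation in the proof of Proposition~\ref{prop:smooth-Hodge} expresses $-G_{m-2}\wedge H^2/B_t^m$ as follows. The constraint $G_{m-1}\wedge H=0$ becomes the linear condition $\sum\sin(\delta+\theta_i)x_i=0$. The diagonal part is a quadratic form in $x_i=\sin\theta_i H_{i\bar i}$, and the off-diagonal part is $\sum_{i<j}\sin\theta_i\sin\theta_j\sin(\delta+\theta_i+\theta_j)|H_{i\bar j}|^2$, all divided by positive factors. The parameters $(\theta_1,\dots,\theta_m,\delta)$ range over a compact set $\mathcal P$: each $\theta_i\in[\arccot K,\arccot\ell]$, $\delta\ge0$, and $\delta+\theta_i+\theta_j\le\delta+P\le\theta$. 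The proof of Proposition~\ref{prop:smooth-Hodge} shows strict negativity on the constraint hyperplane for every point of $\mathcal P$, covering both the case $\delta=0$ and the case $\delta>0$. Since the coefficients depend continuously on the parameters, compactness of $\mathcal P$ times the unit sphere yields a uniform $c>0$. The off-diagonal coefficients are bounded below by $\sin^2(\arccot\ell)\,\sin(2\arccot K)$, which is positive. This gives \eqref{eq:Bt-weighted-coercivity} with $c$ independent of $t$.
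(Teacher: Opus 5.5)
\textbf{Gap in the $t>0$ phase bound.} Your argument for $Q_{B_t}(A_t)\le\theta$ when $t>0$ depends on $\mathcal C_\theta=\{(A,B):B>0,\ Q_B(A)\le\theta\}$ being closed under addition. That claim is false as soon as $m\ge2$, and here $m=n-1\ge2$.

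A counterexample already exists with commuting diagonal pairs and $m=2$:
\begin{itemize}
\item Fix $0<\epsilon<\theta/2$ and $R>0$, and set $z_1=e^{\ii\epsilon}$, $z_2=Re^{\ii(\theta-\epsilon)}$.
\item Let $(A,B)=(\mathrm{diag}(\Rea z_1,\Rea z_2),\mathrm{diag}(\Ima z_1,\Ima z_2))$, and let $(A',B')$ be the same pair with the two diagonal entries swapped.
\item Both pairs have angles $\{\epsilon,\theta-\epsilon\}$, so both have $Q=\theta$.
\item Their sum is $(\Rea w\,I,\Ima w\,I)$ with $w=z_1+z_2$. Both of its angles equal $\arg w$, which tends to $\theta-\epsilon$ as $R\to\infty$.
\item Hence $Q_{B+B'}(A+A')\to2(\theta-\epsilon)>\theta$.
\end{itemize}
So Yuan-type convexity of $\{A:Q_B(A)\le\theta\}$ for fixed $B$ does not extend to joint convexity in $(A,B)$. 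The rotation trick does not supply it. The set cannot be a hyperbolicity cone either, since hyperbolicity cones are convex. Knowing that $(A_0,B_0)$ and $(Kt\kappa,t\kappa)$ both lie in $\mathcal C_\theta$ therefore tells you nothing about their sum.

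\textbf{The fix.} What makes the step work is a fact you had already proved but did not use: $A_0\le KB_0$. Let $V=g^{-1}(D_{\mathrm{reg}})$, which is dense in $X$ and on which $B_0>0$. For $0\ne v\in T_xX$ with $x\in V$, the Rayleigh quotients $R_t(v)=A_t(v,\bar v)/B_t(v,\bar v)$ satisfy the mediant inequality
\[
R_t(v)-R_0(v)=\frac{t\kappa(v,\bar v)\bigl(KB_0(v,\bar v)-A_0(v,\bar v)\bigr)}{B_0(v,\bar v)\bigl(B_0(v,\bar v)+t\kappa(v,\bar v)\bigr)}\ge0 .
\]
By min--max, each ordered eigenvalue of $A_t$ with respect to $B_t$ is at least the corresponding one at $t=0$. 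Each angle therefore does not increase, and $Q_{B_t}(A_t)\le Q_{B_0}(A_0)\le\theta$ on $V$. For fixed $t>0$, $B_t$ is K\"ahler, so $Q_{B_t}(A_t)$ is continuous on $X$, and the bound extends from $V$ to all of $X$ by density. This also handles the degenerate locus of $B_0$ without your approximation step.

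With this replacement, the rest of your plan is essentially the paper's proof. That includes the two-sided eigenvalue bounds, interlacing at $t=0$, $P\le\theta-\arccot K$, $b_t\ge0$, and the compactness argument for the coercivity estimate. One small correction: in your parameter set, justify $\delta+\theta_i+\theta_j\le\theta$ via $\delta+Q=\theta$, because the bound through $P$ fails for $m=2$.
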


\begin{proof}
Set
\(  V:=g^{-1}(D_{\mathrm{reg}})\subset X.
\)
The normalization is an isomorphism over $D_{\mathrm{reg}}$, and $f$
is an isomorphism over $Y_{\mathrm{reg}}$. Hence
$g|_V\colon V\to D_{\mathrm{reg}}$ is biholomorphic. In particular,
$V$ is dense in $X$ and $B_0$ is positive definite on $V$.

Fix $x\in V$ and let $y=g(x)$. Let
$\nu_1\leq\cdots\leq\nu_n$ be the eigenvalues of $\chi$ with respect
to $\omega$ at $y$, and let $\mu_1\leq\cdots\leq\mu_m$ be the
eigenvalues of their restrictions to $T_yD_{\mathrm{reg}}$. Then
\[
  \nu_j\leq\mu_j\leq\nu_{j+1},
  \qquad 1\leq j\leq m=n-1.
\]
The semisubsolution condition on $M$  yields
\[
  Q_{B_0}(A_0)(x)
  =\sum_{j=1}^{m}\operatorname{arccot}\mu_j
  \leq\sum_{j=1}^{n-1}\operatorname{arccot}\nu_j
  \leq\theta,\quad x\in V.
\]

By compactness of $M$, we can choose constants $\ell<K$ such that
\[
  \ell\omega\leq\chi<K\omega,
  \qquad
  K>\max\{0,\ell,\cot(\theta/m)\},
\]
and define $A_t,B_t$ by \eqref{eq:explicit-regularisation}.
Since $B_0\geq0$, the form $B_t=B_0+t\kappa$ is K\"ahler for every
$t>0$. Moreover,
\[
  A_t-\ell B_t
  =g^*(\chi-\ell\omega)+(K-\ell)t\kappa\geq0,
  \qquad
  KB_t-A_t=g^*(K\omega-\chi)\geq0.
\]
Thus all eigenvalues of $A_t$ with respect to $B_t$ lie in the fixed
interval $[\ell,K]$.

We next compute the eigenvalues on $V$.
For $x\in V$ and $0\neq v\in T_xX$, write
$R_t(v)=A_t(v,\bar v)/B_t(v,\bar v)$, including $t=0$.
A direct calculation gives
\[
  R_t(v)-R_0(v)
  =
  \frac{
    t\kappa(v,\bar v)
    \bigl(KB_0(v,\bar v)-A_0(v,\bar v)\bigr)
  }{
    B_0(v,\bar v)
    \bigl(B_0(v,\bar v)+t\kappa(v,\bar v)\bigr)
  }
  \geq0.
\]
Let $\lambda_1(t,x)\leq\cdots\leq\lambda_m(t,x)$ denote the ordered
 eigenvalues of $A_t$  with respect to $B_t$ at $x$.
The min--max principle, applied to the same subspaces of
$T_xX$, implies
\[
  \lambda_j(t,x)\geq\lambda_j(0,x),
  \qquad 1\leq j\leq m.
\]
Consequently,
\[
  Q_{B_t}(A_t)(x)
  \leq Q_{B_0}(A_0)(x)
  \leq\theta,
  \qquad x\in V.
\]
For each fixed $t>0$, the function $Q_{B_t}(A_t)$ is continuous
on all of $X$. Since $V$ is dense, the same upper bound holds
throughout $X$. The eigenvalue upper bound gives $\operatorname{arccot}\lambda_j(t,x)\geq\arccot K$, so
\[
  P_{B_t}(A_t)
  =Q_{B_t}(A_t)-\min_j\operatorname{arccot}\lambda_j(t,x)
  \leq\theta-\arccot K<\theta.
\]
This proves \eqref{eq:regularised-phase-bounds}.
Write $\theta_j=\arccot\lambda_j(x,t)$. Then
\[
  b_t:=\frac{G_{m,\theta}(A_t,B_t)}{B_t^m}
  =
  \frac{\sin\bigl(\theta-Q_{B_t}(A_t)\bigr)}
       {\sin\theta\prod_{j=1}^{m}\sin\theta_j(t,x)}\geq0.
\]

Now we will apply 
Proposition~\ref{prop:smooth-Hodge}.
To make the independence of $t$ explicit, consider the compact set
\[
  \mathcal C
  :=
  \left\{
    \lambda\in[\ell,K]^m:
    \sum_{j=1}^{m}\operatorname{arccot}\lambda_j\leq\theta
  \right\}.
\]
On $\mathbb T^m$, let
\[
  \beta=\sqrt{-1}\sum_{j=1}^{m}dz^j\wedge d\bar z^j,
  \qquad
  A_\lambda=\sqrt{-1}\sum_{j=1}^{m}
                  \lambda_j\,dz^j\wedge d\bar z^j.
\]
For every $\lambda\in\mathcal C$, the preceding phase calculation gives
\[
  P_\beta(A_\lambda)\leq\theta-\arccot K<\theta,
  \qquad
  G_{m,\theta}(A_\lambda,\beta)\geq0.
\]
Identify constant real $(1,1)$-forms on $\mathbb T^m$ with Hermitian
matrices, and consider the normalised primitive pairs
\[
  \mathcal S
  :=
  \left\{
    (\lambda,H):
    \lambda\in\mathcal C,\quad |H|_\beta=1,\quad
    G_{m-1,\theta}(A_\lambda,\beta)\wedge H=0
  \right\}.
\]
This is a compact set.
The function
\[
  (\lambda,H)\longmapsto
  -\frac{G_{m-2,\theta}(A_\lambda,\beta)\wedge H^2}{\beta^m}
\]
is continuous, and it is strictly positive on $\mathcal S$ due to Proposition~\ref{prop:smooth-Hodge}.
It therefore has a positive minimum
$c=c(m,\theta,\ell,K)>0$.

At any $x\in X$ and for any $t>0$, choose a $B_t$-unitary frame
diagonalising $A_t$. Its eigenvalues belong to $\mathcal C$.
Applying the preceding minimum estimate in this situation and using
homogeneity of $H$ gives
\[
  G_{m-1,\theta}(A_t,B_t)\wedge H=0
  \quad\Longrightarrow\quad
  -G_{m-2,\theta}(A_t,B_t)\wedge H^2
  \geq c|H|_{B_t}^2B_t^m.
\]
This is \eqref{eq:Bt-weighted-coercivity}, with the same constant
for every $x\in X$ and $t>0$.
\end{proof}

\begin{lemma}[$O(t)$ weighted energy]\label{lem:weighted-energy}
In the notation above, suppose that $H_t$  are smooth real $(1,1)$-forms, satisfying
\begin{equation}\label{eq:weighted-energy-assumption}
  \int_X|H_t|_{B_t}^2B_t^m\leq Ct.
\end{equation}
Let
\[
  S=\{y\in Y_{\mathrm{reg}}:\operatorname{rank}dh_y\leq m-2\},
  \qquad
  U_0=f^{-1}(Y_{\mathrm{reg}}),
  \qquad
  U=f^{-1}(Y_{\mathrm{reg}}\setminus S).
\]
Then   $V=g^{-1}(D_{\mathrm{reg}})\subset U\subset U_0$, $S$ is an analytic subset on $Y_{\mathrm{reg}}$ of complex codimension at least two, and for every compactly supported smooth
$(2m-2)$-form $\zeta$ on $U$,
$\int_UH_t\wedge\zeta\to0$ as $t\to 0$.
\end{lemma}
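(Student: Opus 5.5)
The plan is to handle the three structural assertions first and then prove the convergence by a uniform $L^2$ bound on compact subsets of $U$, combined with decay off a measure-zero set.

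\emph{Structural facts.} On $V$ the map $g|_V\colon V\to D_{\mathrm{reg}}$ is biholomorphic, so $dh$ has rank $m$ at the corresponding points. Hence $V\subset U$, and $U\subset U_0$ holds by definition. The set $S$ is the rank-$\le m-2$ degeneracy locus of the holomorphic map $dh$ on the manifold $Y_{\mathrm{reg}}$, cut out locally by $(m-1)$-minors, so it is analytic.

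For codimension $\ge2$, suppose an irreducible component $W\subset S$ had dimension $m-1$. Then $d(h|_W)$ would have rank $\le m-2$ on $W_{\mathrm{reg}}$, so $\dim h(W)\le m-2$. This is impossible, because the normalization $h$ is a finite map and therefore preserves dimensions of subvarieties.

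\emph{Positivity of $B_0$ on $U$.} Since $f$ is biholomorphic over $Y_{\mathrm{reg}}$ and $\omega$ is Kähler, $B_0=g^*\omega$ has rank $\ge m-1$ at every point of $U$, because $dg$ has rank $\ge m-1$ there. Fix a compact $K\subset U$. The function $\sigma_{m-1}(B_0)$ (elementary symmetric function of the eigenvalues of $B_0$ with respect to $\kappa$) is continuous and positive on $K$, so $\sigma_{m-1}(B_0)\ge c_K>0$ there.

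Expanding $\det(B_0+t\kappa)$ in $t$, all coefficients are nonnegative. This gives
\[
B_t^m\ge t\,\sigma_{m-1}(B_0)\,\kappa^m\ge c_K t\,\kappa^m \quad\text{on } K.
\]
Also $B_t\le C\kappa$ on $X$ for $t\le1$, so $|H|_{B_t}\ge c|H|_\kappa$ for any real $(1,1)$-form $H$. Inserting both bounds into \eqref{eq:weighted-energy-assumption} yields
\[
\int_K|H_t|_\kappa^2\,\kappa^m\le C_K t^{-1}\int_X|H_t|_{B_t}^2B_t^m\le C_K.
\]
Thus $H_t$ is bounded in $L^2(K)$. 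This is the step that handles the degenerate region: the loss of one power of $t$ in the volume form is exactly compensated by the $O(t)$ energy.

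\emph{Decay off the degenerate locus.} Let $R=\{x\in U:\det_\kappa B_0(x)=0\}$. This is an analytic subset of $U$, and it is proper because $V$ is dense and $B_0>0$ on $V$. Hence $R$ has Lebesgue measure zero. On a compact $K'\subset K\setminus R$ we have $B_0\ge c\kappa$, so $B_t$ is uniformly equivalent to $\kappa$ there. The energy bound then gives $\int_{K'}|H_t|_\kappa^2\kappa^m\le Ct\to0$.

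\emph{Conclusion.} Take $\zeta$ compactly supported in $U$ and set $K=\operatorname{supp}\zeta$. Along any sequence $t_k\to0$, the bounded family $H_{t_k}$ has a subsequence converging weakly in $L^2(K)$ to some $H_*$. By the decay step, $H_*=0$ on $K\setminus R$. Since $R$ is null, $H_*=0$ almost everywhere.

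The pairing $H\mapsto\int_U H\wedge\zeta$ is a continuous linear functional on $L^2(K)$, as $\zeta$ is smooth. Hence $\int_UH_{t_k}\wedge\zeta\to0$ along the subsequence. Since every sequence has such a subsequence, $\int_U H_t\wedge\zeta\to0$ as $t\to0$.

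The main point to verify carefully is the lower bound $B_t^m\ge c_Kt\kappa^m$ on compact subsets of $U$. It relies on the rank of $dh$ being at least $m-1$ there, which is exactly why the rank-$\le m-2$ locus $S$ is removed and why its codimension matters later.
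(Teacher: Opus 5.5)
Your proof is correct. The structural part matches the paper's: analyticity of $S$ via the $(m-1)$-minors, the finite-map dimension count, and $V\subset U$. In the dimension count you should say ``dimension $\geq m-1$'' rather than ``$=m-1$''; the same argument rules out a component of dimension $m$.

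For the convergence, you and the paper use the same ingredients but package them differently. The paper lets $0\le\mu_1\le\cdots\le\mu_m$ be the eigenvalues of $B_0$ relative to $\kappa$. On $K$, $\mu_2$ is bounded below, so it gets the single pointwise bound $B_t^m\ge c_K(\mu_1+t)\kappa^m$. A weighted Cauchy--Schwarz then gives directly
\[
\Bigl|\int_U H_t\wedge\zeta\Bigr|^2\le C\int_K\frac{t}{\mu_1+t}\,\kappa^m ,
\]
and this tends to zero by dominated convergence, since $\mu_1>0$ almost everywhere.

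Your two bounds are exactly the two extreme regimes of the weight $\mu_1+t$:
\begin{itemize}
\item $B_t^m\ge c_Kt\,\kappa^m$ on $K$, which gives a uniform $L^2$ bound;
\item $B_t^m\ge c\,\kappa^m$ on compacts avoiding $R=\{\mu_1=0\}$, which gives $O(t)$ decay there.
\end{itemize}
You then replace dominated convergence by weak $L^2$ compactness and identify the limit as zero off the null set $R$. This step implicitly uses that $K\setminus R$ is $\sigma$-compact, which is fine. The paper's route is shorter and avoids subsequences. Yours is a softer compactness argument that separates cleanly the role of the rank-$\geq m-1$ condition (uniform bound) from the generic immersivity of $h$ (vanishing of the limit). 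Both yield the same conclusion.
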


\begin{proof}
Since the normalization is an isomorphism over $D_{\mathrm{reg}}$,
we have $h^{-1}(D_{\mathrm{reg}})\subset Y_{\mathrm{reg}}$ and
$\operatorname{rank}dh=m$ on this set, hence $h^{-1}(D_{\mathrm{reg}})\cap Z=\emptyset$.
Thus $V\subset U\subset U_0$.

The set $S$ is a closed analytic subset of $Y_{\mathrm{reg}}$,
locally defined by the $(m-1)\times(m-1)$ minors of $dh$.
If an irreducible component $Z\subset S$ had dimension
$r\geq m-1$, then $h|_{Z_{\mathrm{reg}}}$ would have finite fibres
and hence generic rank $r$.
This contradicts with $\operatorname{rank}dh_y\leq m-2$ on $Z$.
Thus $\dim_{\mathbb C}S\leq m-2$.

On a compact set $K_0\subset\subset U$, let
$0\leq\mu_1\leq\cdots\leq\mu_m$ be the eigenvalues of $B_0$ with respect to $\kappa$. By the definition of $S$, $\mu_2$ has a positive lower bound on $K_0$, hence for
$0<t\leq1$ one has
\begin{equation}\label{eq:weighted-local-comparison}
  B_t^m\geq c_{K_0}(\mu_1+t)\kappa^m,
  \qquad
  |H|_{B_t}^2\geq c_{K_0}|H|_\kappa^2.
\end{equation}
The second inequality uses only the upper bound $B_t\leq C_{K_0}\kappa$. Combining
\eqref{eq:weighted-local-comparison} with
\eqref{eq:weighted-energy-assumption} gives
\[
  \int_{K_0}(\mu_1+t)|H_t|_\kappa^2\kappa^m\leq C_{K_0}t.
\]
If $K_0$ contains the support of $\zeta$, weighted Cauchy--Schwarz inequality yields
\[
  \left|\int_UH_t\wedge\zeta\right|^2
  \leq C_{K_0,\zeta}t\int_{K_0}\frac{\kappa^m}{\mu_1+t}
  =C_{K_0,\zeta}\int_{K_0}\frac{t}{\mu_1+t}\,\kappa^m.
\]
The finite map $h$ is generically immersive, so $\mu_1>0$ almost everywhere.
The last integrand is bounded and converges almost everywhere to
zero. Dominated convergence theorem proves the assertion.
\end{proof}

\begin{lemma}[Exceptional support]\label{lem:exceptional-support}
Let $f\colon X\to Y$ be as above.  Recall that the reduced inverse image $E=f^{-1}(Y_{\text{sing}})$ is an SNC divisor, write $E=\sum_{\alpha} E_{\alpha}$, where $E_{\alpha}$ are its distinct irreducible components.. If $\xi\in H^2(X,\mathbb R)$ restricts
to zero on $X\setminus E$, then
\[
  \xi=\sum_\alpha c_\alpha\{E_\alpha\}.
\]
If, in addition, $\xi=g^*\zeta$, where $g=h\circ f$ and $\zeta$ is a
smooth closed form on $M$, then $\xi=0$.
\end{lemma}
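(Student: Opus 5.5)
The plan has two parts. The first is topological and uses the long exact sequence of the pair $(X,X\setminus E)$ in real cohomology,
\[
 H^2(X,X\setminus E;\R)\longrightarrow H^2(X;\R)\longrightarrow H^2(X\setminus E;\R).
\]
If $\xi$ restricts to zero on $X\setminus E$, then $\xi$ lifts to $H^2(X,X\setminus E;\R)$. Let $N$ be a tubular (regular) neighbourhood of $E$ that retracts onto $E$. By excision and Lefschetz--Poincaré duality,
\[
 H^2(X,X\setminus E)\cong H^2(N,\partial N)\cong H_{2m-2}(E;\R).
\]
Since $E$ is a compact analytic set of pure real dimension $2m-2$, the group $H_{2m-2}(E;\R)$ is freely generated by the fundamental classes of its irreducible components $E_\alpha$. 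Under the map to $H^2(X)$, the generator $[E_\alpha]$ goes to the Poincaré dual of $E_\alpha$, which is $\{E_\alpha\}$. This gives $\xi=\sum_\alpha c_\alpha\{E_\alpha\}$.

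For the second part, suppose in addition $\xi=g^*\zeta=f^*(h^*\zeta)$. The key step is the negativity lemma for the birational morphism $f$: the divisor $\sum c_\alpha E_\alpha$ is $f$-exceptional, or at least supported over $Y_{\mathrm{sing}}$. Since $\xi$ is a pullback from $Y$, its restriction to any curve $C$ contracted by $f$ is zero. The class $\sum c_\alpha\{E_\alpha\}$ is therefore numerically $f$-trivial. The negativity lemma (applied to both $\pm\sum c_\alpha E_\alpha$, which are $f$-nef and $f$-exceptional) then forces all $c_\alpha=0$ on exceptional components.

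Components $E_\alpha$ that are not $f$-exceptional need separate care. Here I would first use that $f$ is an isomorphism over $Y_{\mathrm{reg}}$ and that $Y$ is normal. Then $Y_{\mathrm{sing}}$ has codimension $\ge2$, so every component of $f^{-1}(Y_{\mathrm{sing}})$ is exceptional, i.e. maps to a set of codimension $\ge2$. Thus every $E_\alpha$ is $f$-exceptional and $\xi=0$.

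The main obstacle is making the negativity lemma argument rigorous in the (possibly non-projective) Kähler setting. Since $f$ is projective, I would cut $Y$ by general hyperplane-type slices locally, or directly apply the relative Hodge index argument. Concretely, I would first try the standard argument: take a relatively ample class $\kappa$ on $X$ over $Y$. For each exceptional component, with $\mathrm{codim} f(E_\alpha)=k\ge2$, intersect with $\kappa^{m-2}$ and general fibres to reduce to surfaces. There the intersection matrix of exceptional curves is negative definite (Grauert/Mumford), so numerical triviality on contracted curves forces the coefficients to vanish.
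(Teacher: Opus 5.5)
Your proposal is correct and follows essentially the paper's proof. The paper also lifts $\xi$ to $H^2_E(X,\mathbb R)$ and identifies its image in $H^2(X,\mathbb R)$ with the span of the $\{E_\alpha\}$, though it uses the Thom isomorphism and Mayer--Vietoris where you use Lefschetz duality with $H_{2m-2}(E;\mathbb R)$; your version is equivalent and does not even need the SNC hypothesis. It then uses normality of $Y$ to make every $E_\alpha$ $f$-exceptional and applies the negativity lemma to $\pm F$ with $F=\sum_\alpha c_\alpha E_\alpha$, exactly as you do.

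The one difference is that the paper does not reprove the negativity lemma. It cites it from BCHM, in Fujino's complex-analytic version over Stein neighbourhoods in $Y$. If you do want your slicing argument, start from a component with $c_\alpha\neq0$ whose image has maximal dimension $d$. Slice $Y$ locally by $d$ general hyperplane sections meeting that image, and only then cut by $m-2-d$ (not $m-2$) relatively ample divisors. This ensures that $F$ restricted to the resulting surface is supported on contracted curves, so Grauert's negative definiteness applies.
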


\begin{proof}
Consider the cohomology exact sequence: 
\[
  H_E^2(X,\mathbb R)\longrightarrow H^2(X,\mathbb R)
  \longrightarrow H^2(X\setminus E,\mathbb R).
\]
The Thom isomorphism and Mayer--Vietoris for the SNC union identify the
image of the first map with the span of the divisor classes $\{E_\alpha\}$;
multiple intersections have real codimension at least four and do not
contribute in degree two. Hence $\xi=\{F\}$ for an $\mathbb R$-Cartier divisor
$F=\sum_\alpha c_\alpha E_\alpha$ on $X$.
Since $Y$ is normal and
$\operatorname{codim}_{\mathbb C}Y_{\mathrm{sing}}\geq2$, every $E_\alpha$ is $f$-exceptional, and hence $F$ is $f$-exceptional. For every curve
$C$ contracted by $f$,
\[
  F\cdot C=\int_C g^*\zeta=0.
\]
 Hence both $F$ and $-F$ are $f$-nef.
The negativity lemma
\cite[Lemma~3.6.2(2)]{BCHM}, in the complex analytic setting explained
in \cite[Section~11, after Definition~11.1]{FujinoMMP},
applied over Stein neighbourhoods in $Y$, gives
$F\leq0$ and $-F\leq0$. Thus $F=0$, and hence $\xi=\{F\}=0$.
\end{proof}

For a smooth closed real $(1,1)$-form $\zeta$ on $M$, define
\begin{equation}\label{eq:LD-QD}
  L_D(\zeta)=\int_DG_{n-2}(\chi)\wedge\zeta,
  \qquad
  Q_D(\zeta)=\int_DG_{n-3}(\chi)\wedge\zeta^2.
\end{equation}
Both numbers may equivalently be computed on $X$ using $A_0,B_0$ and
$H_0=g^*\zeta$.
\begin{theorem}[Singular LYZ Hodge inequality]
\label{thm:singular-LYZ-Hodge}
Assume $(\chi,\omega)$ satisfies the semisubsolution condition. For every destabilizing prime divisor $D$,
\[
  L_D(\zeta)=0\quad\Longrightarrow\quad Q_D(\zeta)\leq0.
\]
If equality holds, then $g^*[\zeta]=0$ in $H^2(X,\mathbb R)$.
\end{theorem}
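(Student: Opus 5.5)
The idea is to run the smooth argument of Proposition~\ref{prop:smooth-Hodge} on the resolution $X$ of Setup~\ref{setup:resolution}, using the regularised pair $(A_t,B_t)$ from Lemma~\ref{lem:restriction-regularisation}. Then we let $t\to0$: the inequality comes from the limit, and the equality case from the $O(t)$ energy bound combined with Lemmas~\ref{lem:weighted-energy} and~\ref{lem:exceptional-support}.

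\emph{Reduction to $X$.} Since $\dim_{\mathbb C}D=m=n-1$ and $g\colon X\to D$ is birational onto its image, restricting gives
\[
L_D(\zeta)=\int_XG_{m-1,\theta}(A_0,B_0)\wedge g^*\zeta,\qquad Q_D(\zeta)=\int_XG_{m-2,\theta}(A_0,B_0)\wedge(g^*\zeta)^2 .
\]
All intersection numbers $\int_XG_{p,\theta}(A_t,B_t)\wedge\alpha\wedge\beta$ are polynomials in $t$, and they converge to the $t=0$ values.

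\emph{Elliptic correction.} Fix $t>0$ and set $L_t:=G_{m-1,\theta}(A_t,B_t)$. Since $P_{B_t}(A_t)<\theta$, Lemma~\ref{lem:descending-positivity} shows that $L_t$ is a closed positive $(m-1,m-1)$-form. Define
\[
c_t:=\frac{\int_XL_t\wedge g^*\zeta}{\int_XL_t\wedge\kappa}.
\]
The denominator tends to $\int_XG_{m-1,\theta}(A_0,B_0)\wedge\kappa$. This limit is positive, because $G_{m-1,\theta}(A_0,B_0)\geq0$ everywhere and is strictly positive on the dense set $V$. The numerator is a polynomial in $t$ vanishing at $t=0$, because $L_D(\zeta)=0$. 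Hence $c_t=O(t)$.

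Put $\gamma_t:=g^*\zeta-c_t\kappa$, so that $\int_XL_t\wedge\gamma_t=0$. As in the proof of Proposition~\ref{prop:smooth-Hodge}, the self-adjoint elliptic operator $v\mapsto L_t\wedge\ii\partial\bar\partial v/B_t^m$ has kernel the constants. It therefore gives $v_t$ with $H_t:=\gamma_t+\ii\partial\bar\partial v_t$ satisfying $L_t\wedge H_t=0$ pointwise. The uniform coercivity \eqref{eq:Bt-weighted-coercivity} and Stokes' theorem then give
\[
c\int_X|H_t|^2_{B_t}B_t^m\le-\int_XG_{m-2,\theta}(A_t,B_t)\wedge H_t^2=-\int_XG_{m-2,\theta}(A_t,B_t)\wedge\gamma_t^2 .
\]
The right-hand side is a polynomial expression in $t$ and $c_t$, and it tends to $-Q_D(\zeta)$. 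This proves $Q_D(\zeta)\le0$.

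\emph{Equality case.} If $Q_D(\zeta)=0$, the right-hand side vanishes at $t=0$. Since $c_t=O(t)$, it is $O(t)$, so \eqref{eq:weighted-energy-assumption} holds. Lemma~\ref{lem:weighted-energy} then gives $\int_UH_t\wedge\eta\to0$ for every compactly supported smooth $(2m-2)$-form $\eta$ on $U$. If $\eta$ is closed, Stokes' theorem gives $\int_UH_t\wedge\eta=\int_U\gamma_t\wedge\eta\to\int_Ug^*\zeta\wedge\eta$. Hence $g^*\zeta$ pairs to zero with $H^{2m-2}_c(U)$, and Poincar\'e duality shows that $g^*[\zeta]|_U=0$ in $H^2(U,\mathbb R)$.

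Next, $U_0\setminus U=f^{-1}(S)\cong S$ has complex codimension at least two in $U_0$, so real codimension at least four. Hence the restriction map $H^2(U_0,\mathbb R)\to H^2(U,\mathbb R)$ is injective, and $g^*[\zeta]$ restricts to zero on $U_0=X\setminus E$. Lemma~\ref{lem:exceptional-support}, applied to $\xi=g^*[\zeta]$, then yields $g^*[\zeta]=0$.

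\emph{Main obstacle.} I expect the delicate step to be the quantitative one: showing that the energy is genuinely $O(t)$ rather than merely $o(1)$. This needs both $c_t=O(t)$, via positivity of $\int_XG_{m-1,\theta}(A_0,B_0)\wedge\kappa$, and the $t$-independence of the coercivity constant, which Lemma~\ref{lem:restriction-regularisation} supplies. A second point to check is that codimension-two removal preserves degree-two cohomology on the possibly noncompact $U_0$; this follows from the long exact sequence with local cohomology supported in real codimension at least four.
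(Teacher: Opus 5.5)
Your proposal is correct and follows the paper's proof step for step. You regularize via $(A_t,B_t)$, make the class primitive with an $O(t)$ correction, solve the linear elliptic equation for $H_t$, and use the $t$-independent coercivity to obtain an $O(t)$ weighted energy; you then pass through Lemma~\ref{lem:weighted-energy}, Poincar\'e duality on $U$, the codimension-two extension to $U_0$, and Lemma~\ref{lem:exceptional-support}, exactly as the paper does. The only cosmetic difference is that you normalize with the fixed K\"ahler form $\kappa$ instead of $B_t$ (the paper uses $\int_X L_t\wedge B_t\to\int_D G_{n-2}(\chi)\wedge\omega>0$), and this works equally well because $\int_X g^*G_{n-2}(\chi)\wedge\kappa>0$ and $\kappa$ is trivially bounded on compact subsets of $U$.
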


\begin{proof}
Recall $m=n-1$. Use Lemma~\ref{lem:restriction-regularisation} and let
\[
  L_t=G_{m-1,\theta}(A_t,B_t),
  \qquad
  R_t=G_{m-2,\theta}(A_t,B_t),
  \qquad
  H_0=g^*\zeta.
\]
Set
\[
  a_t=\frac{\int_XL_t\wedge H_0}{\int_XL_t\wedge B_t},
  \qquad
  \xi_t=[H_0]-a_t[B_t].
\]
When $t\to 0$,  
\begin{equation}\label{eq:limiting-denominator}
  \int_XL_t\wedge B_t\to\int_XG_{m-1,\theta}(A_0,B_0)\wedge B_0
  =\int_DG_{n-2}(\chi)\wedge\omega>0,
\end{equation}
while $\displaystyle\int_X L_t\wedge H_0$ at $t=0$ is $L_D(\zeta)=0$. It follows that
\begin{equation}\label{eq:at-order}
  a_t=O(t).
\end{equation} 
Since $\displaystyle\int_X L_t\wedge \xi_t=0$, Proposition~\ref{prop:smooth-Hodge} gives $\displaystyle\int_XR_t\wedge\xi_t^2\leq0$. Passing to the limit
gives precisely $Q_D(\zeta)\leq0$.

Suppose now that $Q_D(\zeta)=0$. Then by continuity and ~\eqref{eq:at-order},
\begin{equation}\label{eq:Et-order}
  0\leq\mathcal E_t:=-\int_XR_t\wedge\xi_t^2\leq Ct.
\end{equation}
For every fixed $t>0$, solve the equation
\[
  L_t\wedge\sqrt{-1}\,\partial\bar\partial v_t
  =-L_t\wedge(H_0-a_tB_t),
\]
Its Fredholm compatibility condition is exactly the definition of $a_t$. Denote $H_t=H_0-a_tB_t+\sqrt{-1}\,\partial\bar\partial v_t.$
Since $L_t$ and $R_t$ are closed, Stokes' theorem and
\eqref{eq:Bt-weighted-coercivity} give
\[
  \mathcal E_t=-\int_XR_t\wedge H_t^2
  \geq c\int_X|H_t|_{B_t}^2B_t^m.
\]
Lemma~\ref{lem:weighted-energy} now shows that, for every compactly
supported closed $(2m-2)$-form $\Psi$ on $U$,
\[
  \int_UH_0\wedge\Psi
  =\int_UH_t\wedge\Psi+a_t\int_UB_t\wedge\Psi
  \longrightarrow0.
\]
Here the
first term tends to zero by Lemma~\ref{lem:weighted-energy}, while the
second tends to zero by \eqref{eq:at-order}, since $B_t$ is uniformly
bounded on the fixed compact support of $\Psi$.

It remains to prove that $[H_0]=0$. The pairing
\[
  H^2(U,\mathbb R)\times H_c^{2m-2}(U,\mathbb R)
  \longrightarrow\mathbb R,
  \qquad
  ([\alpha],[\Psi])\longmapsto\int_U\alpha\wedge\Psi,
\]
is nondegenerate by Poincaré duality with compact supports on the smooth
oriented manifold $U$. The preceding vanishing for every compactly
supported closed $\Psi$ therefore gives
\begin{equation}\label{eq:vanishing-on-U}
  [H_0]|_U=0\qquad\text{in }H^2(U,\mathbb R).
\end{equation}

Let $Z:=U_0\setminus U=f^{-1}(S)$. Since $f$ is an isomorphism over
$Y_{\mathrm{reg}}$, $Z$ has complex codimension at least two in $U_0$.
Excision theorem and the Thom isomorphism implies that $H_Z^2(U_0,\mathbb R)=0$. Consider the exact sequence of cohomology groups:
\[
  H_Z^2(U_0,\mathbb R)
  \longrightarrow H^2(U_0,\mathbb R)
  \longrightarrow H^2(U,\mathbb R),
\]
it implies that the restriction map $H^2(U_0,\mathbb R)\to H^2(U,\mathbb R)$ is
injective. The preceding vanishing~\eqref{eq:vanishing-on-U} hence implies
\[
  [H_0]|_{U_0}=0 \qquad\text{in }H^2(U_0,\mathbb R).
\]
Finally, $U_0=X\setminus E$. Since 
$[H_0]=g^*[\zeta]$,
Lemma~\ref{lem:exceptional-support} implies that 
$[H_0]=0$. 
\end{proof}

\subsection{Negative definiteness}

We now prove that $Q$ is negative definite on the span of 
$\{D_1\},\ldots,\{D_N\}$.
 We record the following lemma, which gives a criterion for the nefness of an effective $\mathbb{R}$-divisor. For an effective Cartier divisor on projective manifolds, nefness of its
normal bundle implies nefness of the divisor; see
\cite[Example~1.4.6]{LazarsfeldPositivityI}.
The analogous statement in the K\"ahler setting follows
from \cite[Proposition~3.3(iv)]{DP}, applied to the
current of integration over the divisor. We give a proof of the following formulation using resolutions of the
components, based on
\cite[Theorem~4.3(iii)]{DP}; see also the proof in~\cite[Proposition~25]{Liu2026Boundary} for the case $\{E\}^2=0$.

\begin{lemma}
\label{lem:effective-nefness}
Let $E=\sum_i a_iD_i$ be a nonzero effective $\mathbb{R}$-divisor on a compact
Kähler manifold $M$. For each component $D_i$, let
\(
  h_i\colon D_i^\nu\to D_i\hookrightarrow M
\)
be the normalization and let
\(
  f_i\colon \widetilde D_i\to D_i^\nu
\)
be a resolution. Set $g_i=h_i\circ f_i: \widetilde D_i\to M$.
If the pullback  
\(
  g_i^*\{E\}\in H^{1,1}(\widetilde D_i,\mathbb R)
\)
is nef class on $\widetilde D_i$ for every $i$, then $\{E\}$ is nef on $M$.
\end{lemma}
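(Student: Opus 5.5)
We will use the Demailly--P\u{a}un numerical characterization of nefness via Lelong sublevel sets (\cite[Theorem~4.3(iii)]{DP}, or equivalently \cite[Proposition~3.3(iv)]{DP}). The natural positive current in $\{E\}$ is $T=[E]=\sum_i a_i[D_i]$. Its Lelong numbers are positive exactly along $\operatorname{Supp}E=\bigcup_i D_i$ and vanish elsewhere. So the only obstruction to $\{E\}$ being nef lies along the components $D_i$. Concretely, we will show that for every $\varepsilon>0$ the class $\{E\}$ contains a smooth form $\ge-\varepsilon\omega$. Since $M$ is compact and $E$ is effective, it suffices to produce such forms near $\operatorname{Supp}E$ and glue them to $[E]$ away from it.

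The criterion we aim for is the following restriction statement. Suppose $\alpha$ is a class on $M$ containing a closed positive current $T$, and the restriction of $\alpha$ to every irreducible component $Z$ of the Lelong sets $E_c(T)$ is nef, meaning the pullback to a desingularization of $Z$ is nef. Then $\alpha$ is nef. This is the content of \cite[Theorem~4.3(iii)]{DP}, which handles components in decreasing order of dimension using Siu's decomposition. Here $E_c([E])$ for $c>0$ is either a union of some $D_i$ or is contained in the locus where several components meet. The divisorial components of $E_c([E])$ are exactly among the $D_i$. For these, the hypothesis that $g_i^*\{E\}$ is nef on $\widetilde D_i$ says precisely that $\{E\}|_{D_i}$ is nef in the required sense.

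The main step will be to make the gluing explicit. We first take a smooth representative $\eta\in\{E\}$. On each $\widetilde D_i$, nefness of $g_i^*\{E\}$ gives smooth functions $\phi_{i,\varepsilon}$ on $\widetilde D_i$ with $g_i^*\eta+\ii\partial\bar\partial\phi_{i,\varepsilon}\ge-\varepsilon g_i^*\omega-\varepsilon\kappa_i$. We must descend these to quasi-psh functions on $M$ defined near $D_i$ with loss $\varepsilon$. This is the key technical point of Demailly--P\u{a}un, achieved by an extension and regularization argument: quasi-psh extension from a subvariety, using Richberg-type gluing and the Lelong number control that comes from the resolution. Far from $\operatorname{Supp}E$ we use the potential of $[E]-\eta$, namely $\psi=\sum_i a_i\log|s_i|^2_{h_i}$. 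We then take a regularized maximum of $\psi$ with $C+\phi_\varepsilon$ near the support, with the constant chosen so that $\phi_\varepsilon$ dominates near $E$ and $\psi$ dominates away from it. The result is a smooth function $u_\varepsilon$ with $\eta+\ii\partial\bar\partial u_\varepsilon\ge-\varepsilon\omega$.

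We expect the main obstacle to be the lower-dimensional strata: the intersections $D_i\cap D_j$ and the singular loci of the $D_i$, where the Lelong number of $[E]$ jumps. These are components of $E_c$ that are not divisors, and we must check that the restriction of $\{E\}$ to them is also nef. We will deduce this from nefness on $\widetilde D_i$: any subvariety $W\subset D_i$ lifts to a subvariety $\widetilde W\subset\widetilde D_i$ surjecting onto $W$. Nefness pulls back under restriction, and nefness of the pullback to $\widetilde W$ under a surjective generically finite map descends, again by \cite{DP}. With all strata covered, \cite[Theorem~4.3(iii)]{DP} applies inductively on dimension and gives that $\{E\}$ is nef.
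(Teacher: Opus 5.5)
Your proposal has a genuine gap, and it sits exactly where the hypothesis has to be used: the passage from nefness of $g_i^*\{E\}$ on the resolution $\widetilde D_i$ back to $M$.

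In your explicit gluing step, the functions $\phi_{i,\varepsilon}$ live on $\widetilde D_i$. Since $g_i$ is not injective (it contracts the exceptional locus of $f_i$ and identifies points under normalization), these functions need not be constant on the fibres of $g_i$. So they do not define functions on $D_i$ at all, and there is nothing to extend quasi-plurisubharmonically to a neighbourhood of $D_i$ in $M$. You also never address compatibility of such extensions along $D_i\cap D_j$.

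Your restriction-criterion route has the same problem. You assert that nefness of $g_i^*\{E\}$ ``says precisely'' that $\{E\}|_{D_i}$ is nef in the sense a Lelong-set criterion on the singular space $D_i$ requires. You also assert that nefness of a pullback under a surjective generically finite map ``descends, again by Demailly--P\u{a}un''. Together these restate the lemma rather than prove it. Note too that the paper uses Demailly--P\u{a}un's Theorem~4.3(iii) as the numerical criterion, not as a statement about Lelong sets.

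The missing idea is to stay at the numerical level on $M$ throughout. By the Demailly--P\u{a}un numerical criterion, it suffices to show $\int_V\{E\}\wedge\beta^{p-1}\ge0$ for every irreducible $V$ of dimension $p$ and every K\"ahler class $\beta$.

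\begin{itemize}
\item For $V=M$ or $V\not\subset\operatorname{Supp}E$, this is immediate from effectivity.
\item If $V\subset D_i$, pick an irreducible component $W$ of $g_i^{-1}(V)$ dominating $V$ and a K\"ahler form $\kappa$ on $\widetilde D_i$. Then $(g_i)_*([W]\wedge\kappa^{\dim W-p})=c[V]$ with $c>0$. The projection formula gives $c\int_V\{E\}\wedge\beta^{p-1}=\int_Wg_i^*\{E\}\wedge g_i^*\beta^{p-1}\wedge\kappa^{\dim W-p}$, which is $\ge0$ by nefness of $g_i^*\{E\}$.
\end{itemize}

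This lifting argument is what your ``descent'' step would need in any case. Once you have it, the Lelong-set stratification, Siu decomposition and regularized-maximum gluing are unnecessary.
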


\begin{proof}
We use Demailly--Păun's numerical criterion
\cite[Theorem~4.3(iii)]{DP} for nef class. It suffices to prove that, for any $1\leq p\leq n$, any irreducible analytic subvariety $V\subset M$ of dimension $p$, and any Kähler class $\beta$, $\displaystyle\int_V \{E\}\wedge \beta^{p-1}\geq0$.

For $p=n$,  $V=M$ and $\displaystyle\int_M \{E\}\wedge\beta^{n-1}\geq0$ since $E$ is effective divisor.
For $p\leq n-1$, if $V\not\subset\operatorname{Supp}E$,  the
restriction of $E$ on $V$ is effective divisor and it gives
$\displaystyle\int_V \{E\}\wedge\beta^{p-1}\geq0$.  If $V\subset\operatorname{Supp}E$, since \(V\) is irreducible, there exists a component \(D_i\)
of \(\operatorname{Supp}(E)\) such that \(V\subset D_i\). Choose an
irreducible component $W$ of $g_i^{-1}(V)\subset\widetilde D_i$ that dominates $V$ and choose a Kähler
form $\kappa$ on $\widetilde D_i$. The support theorem gives
\[
  (g_i)_*\bigl([W]\wedge\kappa^{\dim W-p}\bigr)=c[V],
\]  
  where 
  \[c\int_V\beta^p
  =\int_W g_i^*\beta^p\wedge\kappa^{\dim W-p}>0,
\] see~\cite[Chapter~III, \S2, Corollary~2.14]{Demailly} for instance. 
The
projection formula and nefness of $g_i^*\{E\}$ yield
\[
  c\int_V\{E\}\wedge\beta^{p-1}
  =\int_Wg_i^*\{E\}\wedge g_i^*\beta^{p-1}
       \wedge\kappa^{\dim W-p}\geq0.
\]
These
inequalities hold for every irreducible $V$ and every Kähler class $\beta$.
Thus the Demailly--Păun numerical criterion
\cite[Theorem~4.3(iii)]{DP} proves that $\{E\}$ is nef.
\end{proof}

We also need the following lemma. 
\begin{lemma}\label{lem:cubic-rigidity}
Assume $(\chi,\omega)$ satisfies the semisubsolution condition. If $e\in H^{1,1}(M,\mathbb R)$ satisfies $L(e)=Q(e,e)=0$, then
\begin{equation}\label{eq:cubic-rigidity}
  [G_{n-3}(\chi)]\cdot e^3=0.
\end{equation}
\end{lemma}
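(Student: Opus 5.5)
The plan is to perturb $e$ within $\ker L$ and exploit that $Q\le0$ on $\ker L$ (Theorem~\ref{thm:seminegativity}) is stable under deformations of $\chi$ in its class. The introduction hints at this: ``a perturbation with both signs yields $[G_{n-3}(\chi)]\cdot\{E\}^3=0$''. I would perturb $\chi$ in the direction of $e$. Fix a smooth closed representative $\eta$ of $e$ and consider $\chi+s\eta$ for small real $s$. Note that $\frac{d}{ds}[G_p(\chi+s\eta)]=p[G_{p-1}(\chi+s\eta)]\cdot e$, since $\frac{d}{dx}P_{p,\theta}=pP_{p-1,\theta}$ componentwise in the polynomial identity defining $G_p$. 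Hence
\[
[G_n(\chi+s\eta)]=[G_n(\chi)]+n s L(e)+\tbinom n2 s^2Q(e,e)+\tbinom n3 s^3[G_{n-3}(\chi)]\cdot e^3+O(s^4),
\]
and by hypothesis the first three terms vanish.

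Step one is to build forms to which the seminegativity argument applies. The issue is that $\chi+s\eta$ need not lie in $\overline\Gamma_{\omega,\theta}$, nor have $\int G_n=0$. I would therefore not apply Theorem~\ref{thm:seminegativity} directly. Instead I would use the rotation of Lemma~\ref{lem:rotation} together with the strict cone $\Gamma_{\omega_\varepsilon,\theta_\varepsilon}$, which is open. For fixed small $\varepsilon$, the form $\chi_\varepsilon+s\cos\varepsilon\,\eta$ stays in $\Gamma_{\omega_\varepsilon,\theta_\varepsilon'}$ for $|s|\le s_0(\varepsilon)$, where the phase $\theta'_\varepsilon(s)$ is the argument of the perturbed total integral. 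Since $G$ depends on the phase, the correct bookkeeping is through the complex number $Z(s)=\int_M(\chi+s\eta+\ii\omega)^n$. Its expansion is $Z(s)=\sum_k\binom nk s^k\int(\chi+\ii\omega)^{n-k}\wedge e^k$, and the hypotheses say that the $s^1$ and $s^2$ coefficients are real multiples of $e^{\ii\theta}$ direction-compatible (i.e.\ they have vanishing $G$-component). Lin's theorem then gives solutions in $[\chi+s\eta]$ for the rotated data, and Proposition~\ref{prop:smooth-Hodge} yields a Hodge inequality at the perturbed point.

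Step two is to test the perturbed Hodge inequality against $e$ itself, corrected by a multiple of $[\omega]$ to lie in the kernel of the perturbed $L_s(\cdot)=[G_{n-1}(\chi+s\eta)]\cdot(\cdot)$. Set $e_s=e-b_s[\omega]$ with $b_s=L_s(e)/L_s([\omega])$. Since $L_s(e)=L(e)+(n-1)sQ(e,e)+\tbinom{n-1}2 s^2 C+O(s^3)$ with $C=[G_{n-3}(\chi)]\cdot e^3$, we get $b_s=O(s^2)$. Then
\[
Q_s(e_s,e_s)=Q(e,e)+(n-2)sC+O(s^2)=(n-2)sC+O(s^2)\le0
\]
for all small $s$ of both signs; the phase shift contributes only $O(s^2)$ corrections because the first-order change of $Z$ is phase-aligned. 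Taking $s\to0^\pm$ after dividing by $s$ gives $C\le0$ and $C\ge0$, hence $C=0$, which is \eqref{eq:cubic-rigidity}. When $n=3$, $G_0=1$ and the statement reads $e^3=0$; the same argument applies.

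The main obstacle is step one: justifying $Q_s\le0$ on $\ker L_s$ for perturbed data that is no longer exactly a semisubsolution at phase $\theta$. I would handle this with the $\varepsilon$-rotation and a double limit (first $\varepsilon\to0$ for fixed $s$, which requires $s_0(\varepsilon)$ not to degenerate). Alternatively, I would use the closed-cone version: replace $\chi$ by $\chi+s\eta+\tau\omega$ to restore membership in $\Gamma$ at the cost of $O(\tau)$ errors, and choose $\tau=o(s)$. The concern is that the admissible $s$-range may shrink with $\varepsilon$. If that fails, I would instead apply Proposition~\ref{prop:smooth-Hodge} directly to the smooth solutions $\chi_{\varepsilon,u_\varepsilon}$ with the test class $e+s[\text{perturbation}]$, expanding the quadratic Hodge inequality to third order in a two-parameter family.
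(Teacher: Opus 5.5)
Your skeleton is the paper's. You deform $\chi$ along $e$, apply Proposition~\ref{prop:smooth-Hodge} to a smooth solution in the deformed class, and correct $e$ by $b_s[\omega]$ with $b_s=O(s^2)$. You then read off $(n-2)sC$, with $C=[G_{n-3}(\chi)]\cdot e^3$, for both signs of $s$, and your expansions are right. But the step you flag as the main obstacle is a genuine gap, and none of your fixes closes it.

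\begin{itemize}
\item \textbf{Rotation.} With Lemma~\ref{lem:rotation}, the cone margin of $\chi_\varepsilon$ is only $\varepsilon$ wherever $P_\omega(\chi)=\theta$. So pointwise admissibility of $\chi_\varepsilon+s\cos\varepsilon\,\eta$ in general forces $|s|\lesssim\varepsilon$, and you cannot let $\varepsilon\to0$ at fixed $s$. At fixed $\varepsilon>0$, the Hodge quantity at $s=0$ is strictly negative, by the equality case of Proposition~\ref{prop:smooth-Hodge}, since $e\neq0$ is not a multiple of $[\omega]$. It has no reason to be $o(\varepsilon)$. It therefore competes with $(n-2)sC$ on the whole admissible range, and you get at best a bound on $|C|$, not $C=0$.
\item \textbf{Adding $\tau\omega$.} The same happens with $\chi+s\eta+\tau\omega$. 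Pointwise admissibility needs $\tau\gtrsim|s|$, so $\tau=o(s)$ is impossible, and the $O(\tau)$ error is of the same order as the term you want to detect.
\item \textbf{Varying only the test class.} This fallback cannot work at all. Hodge inequalities for solutions in the \emph{undeformed} class only see the quadratic form $[G_{n-2,\theta_\varepsilon}(\chi_\varepsilon,\omega_\varepsilon)]$. The form $G_{n-3}$ enters only when the class of $\chi$ is deformed.
\item \textbf{The class $[\chi+s\eta]$ is typically unstable.} On a destabilizing divisor,
\[
\int_{D_i}G_{n-1}(\chi+s\eta)=(n-1)sQ(e,\{D_i\})+\tbinom{n-1}{2}s^2[G_{n-3}(\chi)]\cdot e^2\cdot\{D_i\}+O(s^3).
\]
Once the linear coefficient vanishes, the $s^2$ coefficient is $\leq0$ by Theorem~\ref{thm:singular-LYZ-Hodge}. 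The phase of $\int_M(\chi+s\eta+\ii\omega)^n$ moves only by $O(s^3)$ under your hypotheses, so letting the phase vary cannot compensate.
\end{itemize}

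The missing idea is to decide solvability of the deformed class numerically, instead of through a pointwise admissible representative. The paper does this in three steps.
\begin{enumerate}
\item It polarizes $Q\leq0$ on $\ker L$ (Theorem~\ref{thm:seminegativity}) at $e$, where $Q(e,e)=0$. This gives $Q(e,f)=0$ for all $f\in\ker L$, in particular $Q(e,\{D_i\})=0$, which kills the linear term on each destabilizing divisor.
\item It deforms by $\chi_s=\chi-s\eta+Ks^2\omega$. The correction is $O(s^2)$, so it does not affect the coefficient $-(n-2)s[G_{n-3}(\chi)]\cdot e^3$. For $K$ large it makes $\int_{D_i}G_{n-1}(\chi_s)>0$ and $\int_MG_n(\chi_s)=nKL([\omega])s^2+O(|s|^3)>0$ for both signs of $s$.
\item It verifies uniform strict stability of $[\chi_s]$ along the test family $\chi_s+r\omega$. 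The pointwise positivity of $G_p(\chi)$ for $p\leq n-2$ (Lemma~\ref{lem:descending-positivity}) survives small perturbations. The uniform gap of Corollary~\ref{cor:uniform-gap} handles all non-destabilizing divisors at once.
\end{enumerate}
Chen's criterion and Proposition~\ref{prop:twisted-existence} then give $\widehat\chi_s\in[\chi_s]\cap\Gamma_{\omega,\theta}$ with $G_n(\widehat\chi_s)=b_s\omega^n$, $b_s>0$, at the fixed phase $\theta$. So Proposition~\ref{prop:smooth-Hodge} applies with no $\varepsilon$-dependent error, even though $\chi_s$ itself is not pointwise admissible.
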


\begin{proof}
If $f\in\ker L$,  by the seminegative definiteness of $Q$ on $\ker L$, we have  
\[
  Q(e+rf,e+rf)=2rQ(e,f)+r^2Q(f,f)\leq 0, \quad r\in\mathbb{R}.
\]
It implies that $Q(e,f)=0$. In particular, $Q(e,D_i)=0$ for every $i$, since
$D_i\in\ker L$.

Choose a smooth representative $\eta\in e$ and set
\begin{equation}\label{eq:chi-s-cubic}
  \chi_s=\chi-s\eta+Ks^2\omega,
  \qquad 0<|s|\ll 1,
\end{equation}
where $K>0$ is a sufficiently large constant that will be chosen later.

For $1\leq p\leq n-2$, $G_p(\chi)|_V>0$ for any $x\in M$  and any $p$-dimensional subspace $V\subset T_x M$ by Lemma ~\ref{lem:descending-positivity}.  By compactness of $M$, for $0<|s|<<1$,
\begin{equation}\label{eq:Gp-strong-positive}
    G_p(\chi_s)|_V>0,\quad 1\leq p\leq n-2, \quad V\subset T_x M.
    \end{equation}

 For a
destabilizing prime divisor $D_i$,  a direct expansion gives
\begin{equation}\label{eq:null-divisor-expansion}
\begin{split}
  \int_{D_i}G_{n-1}(\chi_s)
  & =-(n-1)sQ(e,D_i)+(n-1)s^2
  \left(KA_i+\frac{n-2}{2}B_i\right)+O(|s|^3),
\end{split}
\end{equation}
where
\[
  A_i=\int_{D_i}G_{n-2}(\chi)\wedge\omega>0,
  \qquad
  B_i=[G_{n-3}(\chi)]\cdot e^2\cdot D_i.
\]
The linear term in \eqref{eq:null-divisor-expansion} vanishes. Since every $A_i$ is positive, one may choose a large
$K$ such that $KA_i+\frac{n-2}{2}B_i>0$ for $1\leq i\leq N$. Then $\displaystyle\int_{D_i}G_{n-1}(\chi_s)>0$
for sufficiently small $0<|s|<<1$.

For every other prime divisor $D'$, by Corollary~\ref{cor:uniform-gap}, there exists a uniform gap $\delta>0$ independent of $D'$, such that
\[
  \int_{D'}G_{n-1}(\chi_s)
  \geq (\delta-C|s|)\int_{D'}\omega^{n-1}>0.
\]
In conclusion, for any prime divisor $D$, we have 
\begin{equation}\label{eq:along-divisor}
      \int_{D}G_{n-1}(\chi_s)>\delta'\int_M \omega^{n-1},
      \end{equation}
where $\delta'$ is independent of $D$. Clearly $L([\omega])>0$; see Theorem~\ref{thm:modified-nef} for instance. By the assumptions $L(e)=Q(e,e)=0$, the top-degree expansion is
\begin{equation}\label{eq:top-expansion-cubic}
  \int_MG_n(\chi_s)=nKL([\omega])s^2+O(|s|^3)>0.
\end{equation}
Moreover,
\begin{equation}\label{eq:G_n-1-w}
[G_{n-1}(\chi_s)]\cdot[\omega]>L([\omega])/2>0, \quad\text{for }  0<|s|<<1.
\end{equation}

Consider the test family $\chi_s+r\omega$, $r\geq0$.
By \eqref{eq:Gp-strong-positive}, ~\eqref{eq:along-divisor},
\eqref{eq:top-expansion-cubic} and \eqref{eq:G_n-1-w},
there exists $\varepsilon_s>0$, independent of $r$ and $V$,
such that, for every $1\leq p\leq n$ and every
$p$-dimensional subvariety $V\subset M$,
\begin{equation}\label{eq:Gp-test-family-expansion}
\begin{aligned}
 \int_VG_p(\chi_s+r\omega)
 &=\sum_{j=0}^p\binom pj r^j
   \int_VG_{p-j}(\chi_s)\wedge\omega^j\geq(n-p)\varepsilon_s\int_V\omega^p.
\end{aligned}
\end{equation}
Thus the  test family satisfies the
uniform stable criterion of Chen~\cite[Proposition~5.2]{Chen}. Then Chen's criterion gives a strict subsolution in $[\chi_s]$, and a smooth $\widehat\chi_s\in[\chi_s]$ satisfying
$G_n(\widehat\chi_s)=b_s\omega^n$ with $b_s>0$.

Let
\[
  c_s=\frac{[G_{n-1}(\chi_s)]\cdot e}
             {[G_{n-1}(\chi_s)]\cdot[\omega]},
  \qquad
  e_s=e-c_s[\omega].
\]
Then $[G_{n-1}(\widehat \chi_s)]\cdot e_s=[G_{n-1}(\chi_s)]\cdot e_s=0$. We apply Proposition~\ref{prop:smooth-Hodge} to $(\widehat\chi_s, \omega,e_s)$, then it yields
$[G_{n-2}(\chi_s)]\cdot e_s^2\leq 0$. Since $[G_{n-1}(\chi_s)]\cdot e$ at $s=0$ is $L(e)=0$, and its first derivative is
$-(n-1)Q(e,e)=0$, we have $c_s=O(s^2)$. A direct expansion gives
\[
  [G_{n-2}(\chi_s)]\cdot e_s^2
  =-(n-2)s[G_{n-3}(\chi)]\cdot e^3+O(s^2)\leq0.
\]
Taking first $s>0$ and then $s<0$ proves
\eqref{eq:cubic-rigidity}. 
\end{proof}

 We now prove that the intersection matrix $Q$ is negative definite.
This is equivalent to the assertion of Theorem~\ref{strgenthen}.
\begin{theorem}[Negative definiteness]
\label{thm:negative-definiteness}
Assume $(\chi,\omega)$ satisfies the semisubsolution condition.
Let $D_1,\ldots,D_N$ be all the destabilizing prime
divisors. Then the $N\times N$ real matrix
\[
  Q=(q_{ij}),
  \qquad  
  q_{ij}=[G_{n-2}(\chi)]\cdot[D_i]\cdot[D_j],
\]
is negative definite.
\end{theorem}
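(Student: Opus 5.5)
We argue by contradiction along the lines sketched in the introduction. By Theorem~\ref{thm:seminegativity}, $Q$ is negative semidefinite with $q_{ij}\geq0$ for $i\neq j$. Suppose $Q$ is not negative definite; then $\ker Q\neq0$.

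\textbf{Step 1: a nonnegative kernel vector.} Since $-Q$ is positive semidefinite with nonpositive off-diagonal entries, for $x\in\ker Q$ we write $|x|=(|x_1|,\dots,|x_N|)$. Then
\[
0\leq -|x|^TQ|x|\leq -x^TQx=0,
\]
because $q_{ij}\geq0$ gives $q_{ij}|x_i||x_j|\geq q_{ij}x_ix_j$. Hence $|x|$ attains the minimum of the semidefinite form $-Q$, so $|x|\in\ker Q$. Set $a_i=|x_i|\geq0$, not all zero, and $E=\sum a_iD_i$. Then $e=\{E\}$ satisfies $L(e)=0$, since each $D_i$ is destabilizing, and $Q(e,e)=0$. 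Moreover $Q(e,\{D_i\})=0$ for each $i$, either directly from $Qa=0$ or from the first step of Lemma~\ref{lem:cubic-rigidity}.

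\textbf{Step 2: cubic vanishing and restriction to components.} Lemma~\ref{lem:cubic-rigidity} gives $[G_{n-3}(\chi)]\cdot e^3=0$. Expanding, $\sum_i a_i Q_{D_i}(\zeta)=0$, where $\zeta$ is a representative of $e$ and $Q_{D_i}$ is as in \eqref{eq:LD-QD}. Also $L_{D_i}(\zeta)=[G_{n-2}(\chi)]\cdot D_i\cdot e=Q(e,\{D_i\})=0$ for all $i$. Theorem~\ref{thm:singular-LYZ-Hodge} then gives $Q_{D_i}(\zeta)\leq0$ for each $i$. Since $a_i\geq0$ and the weighted sum is zero, $Q_{D_i}(\zeta)=0$ whenever $a_i>0$. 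The equality case of Theorem~\ref{thm:singular-LYZ-Hodge} yields $g_i^*e=0$ in $H^2(\widetilde D_i,\R)$ for every component $D_i$ of $E$.

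\textbf{Step 3: contradiction.} A zero class is nef, so Lemma~\ref{lem:effective-nefness} shows that $e=\{E\}$ is nef on $M$, and in particular modified nef. But $e$ is a nonzero element of the cone generated by $\{D_1\},\dots,\{D_N\}$. It is nonzero because these classes are linearly independent by Theorem~\ref{thm:exceptional-family}, as the destabilizing divisors lie in $S_c$. This contradicts the exceptional family property, so $Q$ is negative definite.

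The main technical weight lies in Theorem~\ref{thm:singular-LYZ-Hodge} and Lemma~\ref{lem:cubic-rigidity}, which are already available. Within this proof, the delicate points are the Perron--Frobenius-type passage to a nonnegative kernel vector in Step 1 and checking that the hypotheses $L_{D_i}(\zeta)=0$ hold componentwise. The latter needs $Q(e,\{D_i\})=0$ for every $i$, and this follows from $Qa=0$.
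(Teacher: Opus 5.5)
Your proposal is correct and follows the paper's proof essentially step by step. It uses the same passage from a kernel vector $x$ to $|x|$ via $q_{ij}\geq0$ and semidefiniteness, then Lemma~\ref{lem:cubic-rigidity} with the componentwise inequality of Theorem~\ref{thm:singular-LYZ-Hodge} to force $Q_{D_i}(\zeta)=0$ on each component of $E$, and finally the equality case and Lemma~\ref{lem:effective-nefness} to make $\{E\}$ nef, contradicting Theorem~\ref{thm:exceptional-family}. As in the paper, this relies on the section's standing assumption $n\geq3$; for $n=2$ the statement is the Hodge index theorem.
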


\begin{proof}
By Theorem~\ref{thm:seminegativity}, the matrix $Q$ is negative semidefinite,
and $q_{ij}\geq0$ for $i\neq j$.
We argue by contradiction, assuming that $Q$ is  not negative definite, then $Q$ is a singular matrix.

Then there exists a vector $0\neq x$ such that $Qx=0$. Set
$a=(|x_1|,\ldots,|x_N|)^T$.
Since $q_{ij}\geq 0$, we have
\[
  a^TQa-x^TQx
  =
  2\sum_{i<j}q_{ij}
  \bigl(|x_i||x_j|-x_ix_j\bigr)
  \geq0.
\]
Since $x^TQx=0$ and $Q$ is negative semidefinite,
we obtain $a^TQa=0$, and hence $Qa=0$.

Write $a=(a_1,\ldots,a_N)^T.$ Without loss of generality, suppose that
$a_i>0$ for $1\leq i\leq r$ and $a_i=0$ for $i>r$.
Set
\[
  E=\sum_{i=1}^r a_iD_i,
  \qquad
  \{E\}=\sum_{i=1}^r a_i\{D_i\}.
\]
The divisor $E$ is nonzero and effective. Recall 
every destabilizing divisor satisfies $L(\{D_i\})=0$. Then
\[
  L(\{E\})=0,
  \qquad
  Q(\{E\},\{D_j\})
  =\sum_{i=1}^r a_iq_{ij}
  =(Qa)_j=0
  \quad(1\leq j\leq N).
\]
In particular,
$Q(\{E\},\{E\})=0$. Then by Lemma~\ref{lem:cubic-rigidity}, we have
\begin{equation}\label{eq:cubic-E-zero}
  0=[G_{n-3}(\chi)]\cdot \{E\}^3
   =\sum_{i=1}^r a_i\int_{D_i}G_{n-3}(\chi)\wedge\{E\}^2.
\end{equation}
On the other hand, since $\displaystyle\int_{D_i} G_{n-2}(\chi)\wedge \{E\}=Q(\{E\},\{D_j\})=0$, hence by Theorem~\ref{thm:singular-LYZ-Hodge} applied on $D_i$,  we have \begin{equation}\label{eq:Gn-3-seminegative}
 \int_{D_i}G_{n-3}(\chi)\wedge\{E\}^2
  \leq0,
  \qquad 1\leq i\leq r.
\end{equation}
 Since  $a_i>0$ for $1\leq i\leq r$, by~\eqref{eq:cubic-E-zero} and~\eqref{eq:Gn-3-seminegative},  $\displaystyle\int_{D_i}G_{n-3}(\chi)\wedge\{E\}^2=0$ for  $1\leq i\leq r$.

For each $1\leq i\leq r$, let
$g_i\colon X_i\to D^\nu\to D\hookrightarrow M$ be the map obtained
from a resolution of the normalization of $D_i$ constructed in Setup~\ref{setup:resolution}.
Applying the equality case of
Theorem~\ref{thm:singular-LYZ-Hodge} to $\{E\}$  gives
\[
  g_i^*\{E\}=0
  \quad\text{in }H^2(X_i,\mathbb R),
  \qquad 1\leq i\leq r.
\]
Then by Lemma~\ref{lem:effective-nefness}, $\{E\}$ is a nef class on $M$, and hence modified nef on $M$. Then 
$$\{E\}\in
\Cone\{\{D_1\},\ldots,\{D_j\}\}\cap\MN. $$
By Theorem~\ref{thm:exceptional-family}, $\{E\}=0$, and  $a_i=0$ for $1\leq i\leq N$ since $\{D_i\}$ are linearly independent, so $a=0$, which is a contradiction. Therefore $Q$ is negative
definite.
\end{proof}

The following corollary is a key ingredient for the perturbation to obtain a stable class by subtracting the corresponding
effective divisor $E$ from $[\chi]$.
\begin{corollary}
\label{cor:common-divisorial-direction}
If $N>0$, there are positive rational numbers $a_i$ such that
$E=\sum_i a_iD_i$ satisfies
\begin{equation}\label{eq:common-direction}
  [G_{n-2}(\chi)]\cdot \{E\}\cdot \{D_j\}<0,
  \qquad 1\leq j\leq N.
\end{equation}
\end{corollary}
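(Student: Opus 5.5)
The argument is pure linear algebra on the matrix $Q=(q_{ij})$ of Theorem~\ref{thm:negative-definiteness}. By that theorem $Q$ is negative definite, and by Theorem~\ref{thm:seminegativity} we have $q_{ij}\ge0$ for $i\ne j$. Set $P=-Q$. Then $P$ is a symmetric positive definite matrix with nonpositive off-diagonal entries, i.e.\ a nonsingular M-matrix (a Stieltjes matrix). The goal is to find a vector $a$ with positive rational entries such that $Pa$ has all entries strictly positive. Indeed, $[G_{n-2}(\chi)]\cdot\{E\}\cdot\{D_j\}=\sum_i a_i q_{ij}=(Qa)_j$, so this is exactly \eqref{eq:common-direction}.

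\emph{Step 1.} Show that $P^{-1}$ has nonnegative entries with positive diagonal. Write $P=sI-B$ with $s>\max_i p_{ii}$, so that $B\ge0$ entrywise. Since $P$ is positive definite, every eigenvalue of the symmetric matrix $B$ is less than $s$. By Perron--Frobenius the spectral radius of $B$ is its largest eigenvalue, so $\rho(B)<s$. Hence
\[
P^{-1}=s^{-1}\sum_{k\ge0}(B/s)^k\ge0
\]
entrywise, and its diagonal entries are at least $1/s>0$. Alternatively, one can cite the standard M-matrix fact directly.

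\emph{Step 2.} Let $\mathbf 1=(1,\dots,1)^T$ and set $a^0:=P^{-1}\mathbf 1$. Each entry satisfies $a^0_i\ge (P^{-1})_{ii}>0$, so $a^0$ is entrywise positive, and $Pa^0=\mathbf 1>0$. The entries of $a^0$ are real but may be irrational.

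\emph{Step 3.} The conditions $a_i>0$ and $(Pa)_j>0$ are strict linear inequalities, so they define an open set containing $a^0$. Choosing a rational vector $a$ sufficiently close to $a^0$ therefore preserves both, and this gives the desired positive rationals $a_i$.

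The only point requiring care is the positivity of $P^{-1}\mathbf 1$, which is Step 1. This is exactly where the sign condition $q_{ij}\ge0$ is used: without it, negative definiteness alone would not produce an effective direction.
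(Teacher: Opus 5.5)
Your proof is correct, and it produces the same vector $a=(-Q)^{-1}\mathbf 1$ as the paper. The mechanism for showing this vector is positive is different.

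The paper does not use M-matrix theory. It solves $Qa=-\mathbf 1$, splits $a=a^+-a^-$ into positive and negative parts, and pairs with $a^-$:
\begin{itemize}
\item Since $a^+$ and $a^-$ have disjoint supports and $q_{ij}\ge0$ for $i\ne j$, one gets $\langle Qa^+,a^-\rangle\ge0$.
\item If $a^-\ne0$, negative definiteness gives $-\langle Qa^-,a^-\rangle>0$, so $\langle Qa,a^-\rangle>0$. This contradicts $\langle-\mathbf 1,a^-\rangle<0$.
\end{itemize}
Hence $a\ge0$. The paper then leaves strict positivity as ``not hard to see''. It does follow: $a_i=0$ would give $(Qa)_i=\sum_{j\ne i}q_{ij}a_j\ge0$, whereas $(Qa)_i=-1$.

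Your Neumann-series and Perron--Frobenius argument proves something stronger: $(-Q)^{-1}$ is entrywise nonnegative with diagonal entries at least $1/s$. So strict positivity of $a^0$ comes out directly, and any nonnegative nonzero right-hand side would also give a nonnegative solution. The cost is an appeal to Perron--Frobenius, whereas the paper's sign-splitting uses only the quadratic form.

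If you want to avoid Perron--Frobenius, there is an elementary substitute. For symmetric $B\ge0$ entrywise, the bound $|x^TBx|\le|x|^TB|x|$ already shows that the spectral radius of $B$ equals its largest eigenvalue.

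Both proofs rest on the same two inputs: negative definiteness of $Q$ and $q_{ij}\ge0$ off the diagonal. Both finish with the same rational approximation using openness of the strict inequalities.
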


\begin{proof}
Since $Q$ is negative definite ,we can solve $Qa=-\mathbf 1$ for a vector $a=(a_1,\ldots,a_N)\in \mathbb{R}^N$. Write $a=a^+-a^-$ with $a^+,a^-$ are positive part and negative part respectively. since $q_{ij}\geq0$ for $i\neq j$, we have
$\langle Qa^+,a^-\rangle\geq0$. If $a^-\neq0$, negative definiteness gives
$-\langle Qa^-,a^-\rangle>0$, hence
$\langle Qa,a^-\rangle>0$, contradicting
$\langle-\mathbf1,a^-\rangle<0$. Thus $a\geq0$ and it's not hard to see $a_i>0$ for $1\leq i\leq N$. Set $E=\sum_i a_iD_i$, then it satisfies ~\eqref{eq:common-direction}. 
 By approximation, we can choose all $a_i$ are rational numbers.
\end{proof}

\section{A logarithmic singular subsolution}
\label{sec:logbar}

Throughout this section, we assume $(\chi,\omega)$ satisfies the semisubsolution condition.
We construct a logarithmic singular subsolution by subtracting a small
effective divisor $E$ from $[\chi]$. The resulting class satisfies strict
inequalities on all proper subvarieties, whereas its top-dimensional
integral is negative. Thus the numerical criterion of
\cite[Theorem~1.3]{ChuLeeTakahashi} does not apply directly.
We follow the continuity argument of
\cite[Section~7]{ChuLeeTakahashi}, based on
\cite[Section~5]{Chen}, and explain the modifications
needed to allow a negative normalizing constant.

\subsection{Perturbation of the class}

Suppose first that $N>0$. Consider the divisor
$E=\sum_{i=1}^N a_iD_i$ given in Corollary~\ref{cor:common-divisorial-direction}, with $a_i\in\mathbb Q_{>0}$.
Then $L(E)=0$ and $Q(E,D_i)<0$ for every $i$.
Choose smooth Hermitian metrics $h_i$ on $\mathcal O(D_i)$, and denote
their normalized curvature forms by $\theta_i$. Let $s_i$ be the canonical section of $\mathcal O(D_i)$
with zero divisor $D_i$. The Poincar\'e--Lelong formula gives
\begin{equation}
\label{eq:logbar-PL}
 \frac{\ii}{2\pi}\partial\bar\partial
 \log\|s_i\|_{h_i}^2
 =[D_i]-\theta_i
\end{equation}
in the sense of currents.
Set $\theta_E=\sum_{i=1}^N a_i\theta_i$
and $\chi_s=\chi-s\theta_E$.

\begin{lemma}
\label{lem:logbar-perturbation}
For  sufficiently small $s>0$,  for $1\leq p\leq n-2$, one has
$
 G_p(\chi_s)|_V>0$ for any $x\in M$ and any $p$-dimensional subspace $V\subset T_xM$,
 $\displaystyle\int_DG_{n-1}(\chi_s)>0$ for every prime divisor $D$,
and $\displaystyle\int_MG_{n-1}(\chi_s)\wedge\omega>0$. Moreover,
\begin{equation}
\label{eq:logbar-negative-defect}
 \int_MG_n(\chi_s)=\binom n2s^2Q(E,E)+O(s^3)<0.
\end{equation}
\end{lemma}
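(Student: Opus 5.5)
The plan is to expand every quantity polynomially in $s$, exactly as in the proof of Lemma~\ref{lem:cubic-rigidity}. The first-order term along the destabilizing divisors will be controlled by Corollary~\ref{cor:common-divisorial-direction}. All other prime divisors will be handled by the uniform gap of Corollary~\ref{cor:uniform-gap}.

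\textbf{Pointwise conditions.} By Lemma~\ref{lem:descending-positivity}, since $\chi\in\overline\Gamma_{\omega,\theta}$, we have $G_p(\chi)|_V>0$ for $1\le p\le n-2$ at every point and on every $p$-dimensional subspace $V$. The set of pairs $(x,V)$ is compact, as a Grassmannian bundle over $M$. Hence this strict positivity survives the smooth perturbation $\chi_s=\chi-s\theta_E$ for $s$ small.

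Next, $\int_MG_{n-1}(\chi_s)\wedge\omega=L([\omega])+O(s)$. Here $L([\omega])>0$ by Theorem~\ref{thm:modified-nef}, since $[\omega]$ is modified nef and $c>0$. So this quantity stays positive for small $s$.

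\textbf{Divisor conditions.} Since $G_{n-1}$ is polynomial in $\chi$ with $\tfrac{d}{d\chi}G_p=pG_{p-1}$, for a prime divisor $D$ we get
\[
\int_DG_{n-1}(\chi_s)=\int_DG_{n-1}(\chi)-(n-1)s\int_DG_{n-2}(\chi)\wedge\theta_E+O(s^2)\int_D\omega^{n-1}.
\]
The $O(s^2)$ constant is uniform in $D$, because the forms involved are bounded by multiples of $\omega^{n-1}$ and $\int_D$ of a form dominated by $C\omega^{n-1}$ is at most $C\int_D\omega^{n-1}$.

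If $D=D_j$ is destabilizing, the zeroth-order term vanishes. The linear coefficient is $-(n-1)s\,Q(E,D_j)>0$ by \eqref{eq:common-direction}. So the integral is positive for small $s>0$; there are only finitely many $D_j$.

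If $D$ is not destabilizing, Corollary~\ref{cor:uniform-gap} gives $\int_DG_{n-1}(\chi)\ge\delta\int_D\omega^{n-1}$. The remaining terms are bounded by $C s\int_D\omega^{n-1}$ with $C$ independent of $D$. Hence the integral is at least $(\delta-Cs)\int_D\omega^{n-1}>0$.

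\textbf{Top-degree defect.} The expansion is
\[
\int_MG_n(\chi_s)=\int_MG_n(\chi)-nsL(\{E\})+\binom n2s^2Q(\{E\},\{E\})+O(s^3).
\]
The first term vanishes since $\theta=\theta_0$. The second vanishes since each $\{D_i\}\in\ker L$. For the third, $Q(\{E\},\{E\})=\sum_ja_jQ(E,D_j)<0$, because $a_j>0$ and \eqref{eq:common-direction} holds. This gives \eqref{eq:logbar-negative-defect}.

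\textbf{Main obstacle.} The only delicate point is the uniformity over infinitely many non-destabilizing divisors. This needs the gap $\delta$ from Corollary~\ref{cor:uniform-gap} together with the fact that the error term is comparable to $\int_D\omega^{n-1}$ uniformly in $D$. Both are available.
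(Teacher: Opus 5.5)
Your proof is correct and follows the paper's argument essentially step for step. You use Lemma~\ref{lem:descending-positivity} with compactness for the pointwise bounds, and the linear term $-(n-1)sQ(E,D_j)>0$ from Corollary~\ref{cor:common-divisorial-direction} on the destabilizing divisors. Elsewhere you use the uniform gap $(\delta-Cs)\int_D\omega^{n-1}$ from Corollary~\ref{cor:uniform-gap}, and in top degree the constant and linear terms vanish, with $Q(E,E)=\sum_j a_jQ(E,D_j)<0$. You also justify why the error constant is uniform in $D$: a smooth $(n-1,n-1)$-form restricted to any hyperplane is dominated by $C\,\omega^{n-1}$. The paper only asserts this uniformity.
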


\begin{proof}
For $1\leq p\leq n-2$, $G_p(\chi_s)|_V>0$ follows from
Lemma~\ref{lem:descending-positivity} and compactness. For any destabilizing prime divisor $D_i$, we have
$\displaystyle\int_{D_i}G_{n-1}(\chi_s)=-(n-1)sQ(E,D_i)+O(s^2)>0$.
For the other prime divisors, the uniform gap in
Corollary~\ref{cor:uniform-gap} gives
\[
 \int_DG_{n-1}(\chi_s)\geq(\delta-Cs)\int_D\omega^{n-1}>0,
\]
with $C$ independent of $D$.
$\displaystyle\int_MG_{n-1}(\chi_s)\wedge\omega>0$ at $s=0$ by
Theorem~\ref{thm:modified-nef}, and remains positive for small $s$.
Finally, the constant and linear terms in expansion of $\displaystyle\int_MG_n(\chi_s)$
vanish, while
$Q(E,E)=\sum_i a_iQ(E,D_i)<0$. This proves
\eqref{eq:logbar-negative-defect}.
\end{proof}

 Chen's twisted existence theorem
\cite[Proposition~5.5]{Chen}   dealt with the case that right-hand side of the LYZ equation may change sign, while its integral is
nonnegative. We therefore use Lemma~\ref{lem:logbar-twisted-phase} below
for the equations with negative integral. The other change is to obtain
the positive volume estimate needed for mass concentration without
assuming positivity of the right-hand side.

\subsection{The twisted equation and a volume estimate}

For clarity, we recall
$G_{d,\theta}(A,B)=\Rea(A+\ii B)^d-\cot\theta\,\Ima(A+\ii B)^d$. We record the twisted existence statement.

\begin{lemma}
\label{lem:logbar-twisted-phase}
Let $(Y^d,\omega)$ be a compact K\"ahler manifold, $d\geq2$, and let
$0<\theta<\Theta<\pi$, with $\Theta-\theta<\theta/(d-1)$.
There is $\epsilon(d,\theta,\Theta)>0$, depending only on these
parameters, with the following property. Suppose that $[\eta]$ contains
a smooth form $\eta_v$ with $P_\omega(\eta_v)<\theta$, and that
\[
 f>-\epsilon(d,\theta,\Theta),\qquad
 \int_Yf\omega^d=\int_YG_{d,\theta}(\eta,\omega),
 \qquad f\in C^\infty(Y,\mathbb R).
\]
Then there is a smooth representative $\eta_u\in[\eta]$ satisfying
\begin{equation}
\label{eq:logbar-twisted-phase}
 G_{d,\theta}(\eta_u,\omega)=f\omega^d,\qquad
 P_\omega(\eta_u)<\theta,\qquad Q_\omega(\eta_u)<\Theta.
\end{equation}
\end{lemma}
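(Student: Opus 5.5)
We follow the proof of Proposition~\ref{prop:twisted-existence}: reduce to Lin's solvability theorem \cite[Theorem~1.4]{LinSolvability} for a right-Noetherian polynomial, then use the phase identity to exclude the boundary of the cone. The new ingredient is the extra bound $Q_\omega(\eta_u)<\Theta$. This must come from the lower bound on $f$ together with $\Theta-\theta<\theta/(d-1)$.

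\emph{Step 1: solvability.} Set $A_{d,\theta}=1/(\sin\theta\sin^{d-1}(\theta/(d-1)))$, and take $\epsilon(d,\theta,\Theta)\le\frac12 A_{d,\theta}$, to be shrunk in Step 3. For $a>-\epsilon$, the polynomial $p_a(x)=P_{d,\theta}(x)-a$ satisfies $p_a(\cot(\theta/(d-1)))=-A_{d,\theta}-a<0$. Exactly as before, $p_a$ is strictly right-Noetherian. The $\Upsilon_1$-cone of $\mathcal F_a$ is $\{P<\theta\}$, so $\eta_v-\cot\theta\,\omega$ is a $\mathcal C$-subsolution. Lin's theorem, together with the compatibility condition, then gives a smooth $\eta_u\in[\eta]$ with $G_{d,\theta}(\eta_u,\omega)=f\omega^d$.

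\emph{Step 2: the cone.} At a minimum point of $u-v$, we have $P_\omega(\eta_u)\le P_\omega(\eta_v)<\theta$. If $P_\omega(\eta_u)=\theta$ somewhere, the identity
\[
f=\frac{\sin(\theta-Q)}{\sin\theta\prod_i\sin\theta_i}
\]
forces $f\le -A_{d,\theta}$, a contradiction. By connectedness, $P_\omega(\eta_u)<\theta$ everywhere.

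\emph{Step 3: the bound on $Q$.} Write $Q=\sum_i\theta_i$, and let $\theta_{\min}$ be the smallest angle. From $P<\theta$ we get $Q=P+\theta_{\min}<\theta+\theta/d$, so $\theta_{\min}<\theta/(d-1)$. Suppose $Q\ge\Theta$, i.e.\ $\theta-Q\le\theta-\Theta<0$, with $\theta-Q>-\theta/(d-1)>-\pi$.
\begin{itemize}
\item The numerator satisfies $\sin(\theta-Q)\le-\sin\bigl(\min(\Theta-\theta,\pi-(\Theta-\theta))\bigr)$. More simply, $\sin(\theta-Q)\le -m$ where $m=\min\{\sin(\Theta-\theta),\sin(\theta/(d-1))\}>0$, since $Q-\theta\in[\Theta-\theta,\theta/(d-1))$ and $\sin$ is concave there.
\item The denominator satisfies $\sin\theta\prod_i\sin\theta_i\le 1$.
\end{itemize}
Hence $f\le -m$ at such a point. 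Choosing $\epsilon\le m$ then contradicts $f>-\epsilon$. So
\[
\epsilon(d,\theta,\Theta)=\min\Bigl\{\tfrac12A_{d,\theta},\ \sin(\Theta-\theta),\ \sin\bigl(\theta/(d-1)\bigr)\Bigr\}
\]
works, and the set $\{Q\ge\Theta\}$ is empty. No continuity argument is needed, because the bound is pointwise.

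The main obstacle is Step 3: we must check that $Q-\theta$ stays in an interval on which $\sin$ is bounded below. This is exactly where $\Theta-\theta<\theta/(d-1)$ enters, since $Q-\theta<\theta_{\min}<\theta/(d-1)<\pi$. If the concavity bound at the endpoint $\theta/(d-1)$ is delicate, the fallback is to use the strict inequality $Q-\theta<\theta_{\min}$ directly, which keeps $\sin(Q-\theta)$ bounded away from zero.
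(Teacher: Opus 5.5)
Your proposal is correct and is essentially the paper's proof. You obtain $\eta_u$ with $P_\omega(\eta_u)<\theta$ from Proposition~\ref{prop:twisted-existence}, whose proof you reproduce, and then use $f=-\sin(Q-\theta)/(\sin\theta\prod_i\sin\theta_i)$ with $Q-\theta\in[\Theta-\theta,\theta/(d-1))$ to force $-f\geq\min\{\sin(\Theta-\theta),\sin(\theta/(d-1))\}$. The paper keeps the extra factor $1/\sin\theta$ and you drop it, which only changes the constant. One small slip: $P_\omega<\theta$ gives $Q<\theta+\theta_{\min}<\theta+\theta/(d-1)$ (equivalently $(d-1)Q<d\theta$), not $Q<\theta+\theta/d$; your next conclusion $Q-\theta<\theta_{\min}<\theta/(d-1)$ is still correct and is all you use.
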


\begin{proof}
Let $\varepsilon_{d,\theta}$ be the constant in
Proposition~\ref{prop:twisted-existence}. Choose
\begin{equation}
\label{eq:logbar-phase-threshold}
 \epsilon(d,\theta,\Theta)=\frac12\min\left\{
 \varepsilon_{d,\theta},\,
 \frac{\min\{\sin(\Theta-\theta),\sin(\theta/(d-1))\}}{\sin\theta}
 \right\}.
\end{equation}
Proposition~\ref{prop:twisted-existence} gives a solution with $P_\omega(\eta_u)<\theta$.
We only need to check the bound for $Q_\omega$.
Write the eigenvalues as $\lambda_i=\cot\theta_i$, with
$\theta_i\in(0,\pi)$, and let  $\delta= Q_\omega(\eta_u)-\theta$.
The partial-phase inequalities imply
$-\theta<\delta<\theta/(d-1)<\pi$. The equation can be written as
\begin{equation}
\label{eq:logbar-phase-identity}
 f=-\frac{\sin\delta}{\sin\theta\prod_i\sin\theta_i}.
\end{equation}
If $f\geq0$, then $ Q_\omega(\eta_u)\leq\theta$. If $f<0$ and $ Q_\omega(\eta_u)\geq\Theta$, then
\[
 -f\geq
 \frac{\min\{\sin(\Theta-\theta),\sin(\theta/(d-1))\}}{\sin\theta},
\]
contrary to \eqref{eq:logbar-phase-threshold}. Thus $ Q_\omega(\eta_u)<\Theta$.
\end{proof}

We also need the following pointwise lemma to obtain the positive volume lower bound needed for mass concentration,
allowing the normalizing constant to be negative.

\begin{lemma}
\label{lem:logbar-volume}
Let $A$ and $B$ be real $(1,1)$-forms on a complex vector space
of dimension $d$, with $B>0$. Fix
$0<\theta<\Theta_1<\Theta_2<\pi$, and assume that
$Q_B(A)<\Theta_1$. Set
\[
 \Psi:=A-\cot\Theta_2\,B,
 \qquad a:=\cot\Theta_1-\cot\Theta_2>0.
\]
Then there exists $C=C(d,\theta,\Theta_1,\Theta_2)>0$ such that
\[
 \Psi\geq aB,\qquad |G_{d,\theta}(A,B)|\leq C\Psi^d.
\]
Moreover, if
\[
 G_{d,\theta}(A,B)=(R-1+c)B^d,
 \qquad R\geq0,\qquad c\geq-c_0
\]
for some $c_0\geq0$, then
\begin{equation}
\label{eq:logbar-positive-concentration}
 \Psi^d\geq bR B^d,
 \qquad b:=\bigl(C+(1+c_0)a^{-d}\bigr)^{-1}>0.
\end{equation}
In particular, $b$ depends only on
$d,\theta,\Theta_1,\Theta_2$ and $c_0$.
\end{lemma}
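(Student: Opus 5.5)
The plan is to work pointwise and diagonalize. Choose a $B$-unitary frame diagonalizing $A$, with eigenvalues $\lambda_i=\cot\theta_i$, where $\theta_i\in(0,\pi)$. Then $\Psi$ is diagonal with entries $\lambda_i-\cot\Theta_2$.

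\textbf{Step 1: $\Psi\ge aB$.} Since $Q_B(A)=\sum_i\theta_i<\Theta_1$ and each $\theta_i>0$, every single angle satisfies $\theta_i<\Theta_1$. Because $\cot$ is decreasing on $(0,\pi)$, this gives $\lambda_i>\cot\Theta_1$. Hence $\lambda_i-\cot\Theta_2>a>0$, which proves $\Psi\ge aB$.

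\textbf{Step 2: $|G_{d,\theta}(A,B)|\le C\Psi^d$.} I will use the phase identity from the earlier proofs:
\[
\frac{G_{d,\theta}(A,B)}{B^d}=\frac{\sin(\theta-\sum_i\theta_i)}{\sin\theta\prod_i\sin\theta_i}.
\]
Its absolute value is at most $\bigl(\sin\theta\prod_i\sin\theta_i\bigr)^{-1}$. The factor $1/\sin\theta_i=\sqrt{1+\lambda_i^2}$ must be bounded by $C'(\lambda_i-\cot\Theta_2)$. For this, split into two ranges of $\lambda_i$.
\begin{itemize}
\item If $\lambda_i\ge|\cot\Theta_2|+1$, then $\lambda_i-\cot\Theta_2\ge\lambda_i/2$ up to constants, and $\sqrt{1+\lambda_i^2}\le 2\lambda_i$.
\item If $\lambda_i$ lies in the bounded interval $(\cot\Theta_1,|\cot\Theta_2|+1)$, then $\sqrt{1+\lambda_i^2}$ is bounded above, while $\lambda_i-\cot\Theta_2\ge a$.
\end{itemize}
In both cases $\sqrt{1+\lambda_i^2}\le C'(\lambda_i-\cot\Theta_2)$ with $C'$ depending only on $\Theta_1,\Theta_2$. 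Taking the product over $i$ gives $|G_{d,\theta}|\le C\Psi^d$ with $C=C'^d/\sin\theta$. This step needs no assumption on $\theta$ beyond $\theta\in(0,\pi)$.

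\textbf{Step 3: the concentration bound.} Suppose $G_{d,\theta}(A,B)=(R-1+c)B^d$. Then
\[
RB^d=G_{d,\theta}(A,B)+(1-c)B^d\le C\Psi^d+(1+c_0)B^d,
\]
using $c\ge-c_0$ and Step 2. Step 1 gives $B^d\le a^{-d}\Psi^d$. Combining these, $RB^d\le\bigl(C+(1+c_0)a^{-d}\bigr)\Psi^d$, which is the stated bound with $b=\bigl(C+(1+c_0)a^{-d}\bigr)^{-1}$.

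The only delicate point is the uniformity of $C$ in Step 2 as some $\lambda_i\to\infty$ (that is, $\theta_i\to0$). The two-range comparison above handles this.
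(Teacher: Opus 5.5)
Your proof is correct and follows the paper's argument. You diagonalize to get $\lambda_i>\cot\Theta_1$, bound $|G_{d,\theta}(A,B)|/B^d$ by $\frac{1}{\sin\theta}\prod_i\sqrt{1+\lambda_i^2}$, compare each factor with $\lambda_i-\cot\Theta_2$, and absorb $(1+c_0)B^d$ via $B^d\le a^{-d}\Psi^d$. The only cosmetic difference is that the paper obtains the factor comparison in one line, $\sqrt{1+\lambda_i^2}\le\bigl(1+\sqrt{1+\cot^2\Theta_2}/a\bigr)(\lambda_i-\cot\Theta_2)$, where you split into two ranges of $\lambda_i$.
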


\begin{proof}
Choose a $B$-unitary frame in which $A$ is diagonal, with
eigenvalues $\lambda_1,\ldots,\lambda_d$.
$Q_B(A)<\Theta_1$ implies
$\arccot\lambda_i<\Theta_1$, and hence
$\lambda_i>\cot\Theta_1$ for every $i$.
Thus $\Psi\geq aB$.
Since $\lambda_i-\cot\Theta_2\geq a$, we have
\[
 \sqrt{1+\lambda_i^2}
 \leq \left(1+\frac{\sqrt{1+\cot\Theta_2^2}}{a}\right)(\lambda_i-\cot\Theta_2).
\]
Using
$|\operatorname{Re}z-\cot\theta\,\operatorname{Im}z|
\leq |z|/\sin\theta$, we obtain
\[
 \frac{|G_d^\theta(A,B)|}{B^d}
 \leq \frac{1}{\sin\theta}
       \prod_{i=1}^d\sqrt{1+\lambda_i^2}
 \leq C\prod_{i=1}^d(\lambda_i-\cot\Theta_2)
 =C\frac{\Psi^d}{B^d},
\]
where one may take
\[
 C=\frac{1}{\sin\theta}
   \left(1+\frac{\sqrt{1+\cot\Theta_2^2}}{a}\right)^d.
\]

Finally, set $D:=\Psi^d/B^d$. The inequality $\Psi\geq aB$
gives $D\geq a^d$. Therefore
\[
 R=\frac{G_d^\theta(A,B)}{B^d}+1-c
 \leq CD+1+c_0
 \leq \bigl(C+(1+c_0)a^{-d}\bigr)D,
\]
which proves \eqref{eq:logbar-positive-concentration}.
\end{proof}

\subsection{A numerical criterion for the twisted equation}

Let $\eta_{t,0}$, $t\geq0$, be a smooth test family from
$\eta$: $\eta_{0,0}=\eta$,
$\eta_{t_2,0}-\eta_{t_1,0}>0$ for $t_2>t_1$, and
$\eta_{t,0}>\cot(\theta/n)\omega$ for all sufficiently large $t$.
We record the following lemma, which is a consequence of the proof of
\cite[Theorem~3.1]{ChuLeeTakahashi}.

\begin{lemma}
\label{lem:logbar-local-subsolution}
Suppose that, for every $t\geq0$ and every proper irreducible analytic
subvariety $W\subsetneq M$ of dimension $p\geq1$,
\begin{equation}
\label{eq:logbar-proper-stability}
 \int_WG_{p,\theta}(\eta_{t,0},\omega)>0.
\end{equation}
Then, for every $t\geq0$ and proper analytic subset $Z\subsetneq M$,
there is a neighbourhood $U_Z$ and $v\in C^\infty(U_Z)$ such that
\[
 Q_\omega(\eta_{t,0}+\ii\partial\bar\partial v)<\theta
 \quad\text{on }U_Z.
\]
\end{lemma}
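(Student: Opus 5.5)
The plan is to follow the induction-on-dimension scheme in the proof of \cite[Theorem~3.1]{ChuLeeTakahashi}, which itself adapts Chen's mass-concentration argument \cite[Section~5]{Chen}. We argue by induction on $\dim Z$, and for a fixed $t$ reduce to the case that $Z$ is irreducible. Note that the hypothesis \eqref{eq:logbar-proper-stability} involves only proper subvarieties, and the conclusion concerns only neighbourhoods of proper analytic subsets. The negative top-degree integral of the whole class therefore never enters; this is exactly why the lemma survives in our setting.

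\emph{Base case and inductive step.} If $\dim Z=0$, $Z$ is a finite set of points. Near each point we choose a local potential $v=A|z|^2$ with $A$ large, so that all eigenvalues exceed $\cot(\theta/n)$ and hence $Q_\omega<\theta$. Then we shrink $U_Z$. For $\dim Z=p\geq1$, first apply the inductive hypothesis to $Z_{\mathrm{sing}}$ (of smaller dimension), giving a local subsolution $v_1$ on a neighbourhood $U_1$ of $Z_{\mathrm{sing}}$. On the regular part, use \eqref{eq:logbar-proper-stability} for $W=Z$, for all subvarieties of $Z$, and along the whole test family $\eta_{t',0}$, $t'\geq t$. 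Together with Chen's argument on a resolution $\widetilde Z$ of $Z$, this solves the twisted $p$-dimensional equation of Lemma~\ref{lem:logbar-twisted-phase} type on $\widetilde Z$, with background a slightly perturbed pullback metric. The continuity method along $t'$ produces a form in $[\eta_{t,0}|_{Z}]$ whose restriction to $TZ$ has $Q<\theta$ on $Z$ away from $U_1$. Here strict positivity of $\int_Z G_{p,\theta}$ is what gives $Q_{\omega|_Z}<\theta$ rather than mere equality.

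\emph{Extension and gluing.} Next we extend the restricted potential from $Z_{\mathrm{reg}}\setminus U_1$ to a neighbourhood in $M$. We add $A\,\mathrm{dist}^2$ in the normal directions (locally, $A\sum|w_j|^2$ in normal coordinates); since large normal eigenvalues contribute arbitrarily small angles, $Q_\omega<\theta$ persists on a tubular neighbourhood. We then glue with $v_1$ by a regularized maximum: Chu--Lee--Takahashi's gluing lemma, using convexity of the sublevel set $\{Q_\omega<\theta\}$ and the standard trick of subtracting constants so that the appropriate function dominates on each overlap. Finally we shrink to obtain $U_Z$.

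\emph{Main obstacle.} The hard step is producing the subsolution on $Z$ itself from the purely numerical hypothesis when $Z$ is singular. This requires mass concentration on the resolution and passing the estimate back down, which is precisely the content of \cite[Theorem~3.1]{ChuLeeTakahashi}. I expect to quote that argument essentially verbatim. The one point to check is that it uses only \eqref{eq:logbar-proper-stability} on subvarieties of $Z$, plus the behaviour of $\eta_{t,0}$ for large $t$, where $\eta_{t,0}>\cot(\theta/n)\omega$ gives a trivial subsolution to start the continuity path. It must not use nonnegativity of $\int_M G_n$.
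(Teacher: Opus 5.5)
Your proposal follows the same route as the paper. Both reduce the lemma to the proof of Chu--Lee--Takahashi's Theorem~3.1 (the induction, the twisted equations on resolutions, and the extension to a neighbourhood with regularized-maximum gluing). Both observe that for each proper subvariety the only numerical input is its own strictly positive $p$-dimensional integral from \eqref{eq:logbar-proper-stability}, that the sign of $\int_M G_{n,\theta}(\eta_{t,0},\omega)$ is never used, and both handle a fixed $t$ by running the argument along the shifted family $\eta_{t+s,0}$, $s\geq0$.
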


\begin{proof}
The proof of \cite[Theorem~3.1]{ChuLeeTakahashi} applies using only
\eqref{eq:logbar-proper-stability}. Indeed, for each subvariety of
dimension $p<n$, the positive  integral required in
\cite[Lemma~3.3]{ChuLeeTakahashi} is its own $p$-dimensional integral,
which is strictly positive by hypothesis. The induction, the equations
on the resolutions, and the construction in Sections~4--6 of that paper
therefore remain unchanged. The extension to a neighbourhood is given
in the proof of its Theorem~3.1 in Section~7. None of these steps uses
the  integral  $\displaystyle\int_M G_n(\eta_{t,0},\omega)$, although its nonnegativity is
included in the statement of that theorem. To obtain the assertion at
a given $t$, apply the same proof to the family
$\eta_{t+s,0}$, $s\geq0$.
\end{proof}

\begin{theorem}
\label{thm:logbar-signed-criterion}
Let $n\geq2$ and $0<\theta<\pi$. There are constants
$\epsilon_{n,\theta}>0$ and
$\Theta_0\in(\theta,\pi)$, depending only on $n,\theta$, such that the
following holds. Define
\[
 \tau=\frac{\int_M G_{n,\theta}(\eta,\omega)}{\int_M\omega^n},
 \qquad \tau>-\epsilon_{n,\theta}.
\]
Then the following conditions are equivalent:
\begin{enumerate}[label=\textup{(\roman*)},leftmargin=*,itemsep=2pt]
\item For every test family from $\eta$, one has
\eqref{eq:logbar-proper-stability} and
\begin{equation}
\label{eq:logbar-ambient-stability}
 \int_MG_{n,\theta}(\eta_{t,0},\omega)
 \geq\tau\int_M\omega^n\qquad(t\geq0).
\end{equation}
\item These inequalities hold along one such test family.
\item There is a smooth representative $\eta_u\in[\eta]$
satisfying
\begin{equation}
\label{eq:logbar-signed-solution}
 G_{n,\theta}(\eta_u,\omega)=\tau\omega^n,
 \qquad \eta_u\in\Gamma_{\omega,\theta,\Theta_0}.
\end{equation}
\end{enumerate}
\end{theorem}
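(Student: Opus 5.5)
The implications (i)$\Rightarrow$(ii) are trivial, and I would prove (iii)$\Rightarrow$(i) and (ii)$\Rightarrow$(iii).

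\textbf{(iii)$\Rightarrow$(i).} Let $\eta_u\in\Gamma_{\omega,\theta,\Theta_0}$ solve \eqref{eq:logbar-signed-solution}, and take any test family. Write $\eta_{t,0}=\eta+\sigma_t$ with $\sigma_t\geq0$ cohomologically, and use the representative $\eta_u+\sigma_t$. For a proper subvariety $W$ of dimension $p$, Lemma~\ref{lem:descending-positivity} gives $G_p(\eta_u)|_W>0$ pointwise on $W_{\mathrm{reg}}$. Expanding $G_p(\eta_u+\sigma_t)=\sum_j\binom pj\sigma_t^j\wedge G_{p-j}(\eta_u)$ shows that all terms are nonnegative and the $j=0$ term is strictly positive, which yields \eqref{eq:logbar-proper-stability}. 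For the top degree, the same expansion gives $\int_MG_n(\eta_{t,0})\geq\int_MG_n(\eta_u)=\tau\int_M\omega^n$, since $G_{n-j}(\eta_u)\geq0$ for $j\geq1$. Here I need a smooth semipositive representative of $\sigma_t$; for a general test family I would approximate, or use the definition of test families by smooth forms $\eta_{t,0}$ together with $\eta_{t,0}-\eta\geq0$ as forms.

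\textbf{(ii)$\Rightarrow$(iii).} I would follow the continuity method of \cite[Section~7]{ChuLeeTakahashi} and \cite[Section~5]{Chen}. For large $t$, $\eta_{t,0}>\cot(\theta/n)\omega$, so $\eta_{t,0}$ is itself a subsolution. Let $I$ be the set of $t$ for which $[\eta_{t,0}]$ admits a solution of $G_n(\eta_{t,u},\omega)=\tau_t\omega^n$ in $\Gamma_{\omega,\theta,\Theta_0}$, where $\tau_t$ is the normalized integral. Since $\tau_t\geq\tau>-\epsilon_{n,\theta}$, Lemma~\ref{lem:logbar-twisted-phase} (with $\epsilon_{n,\theta}\leq\epsilon(n,\theta,\Theta_0)$ and $\Theta_0-\theta<\theta/(n-1)$) gives both openness and solvability once a strict subsolution is available. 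Closedness reduces to producing a strict subsolution at $t_\infty=\inf I$. Arguing as in Chen, I would solve twisted equations $G_n(\eta_{t,u},\omega)=(R\,\text{bump}-1+c)\omega^n$ with right-hand side concentrating mass near a point $x$, for $t$ slightly above $t_\infty$. Lemma~\ref{lem:logbar-volume} then gives $\Psi^n\geq bR\,\omega^n$ even though $c$ may be negative; this is exactly where the signed normalization $\tau<0$ must be handled. As $R\to\infty$, the weak limits of $\Psi=\eta_u-\cot\Theta_2\omega$ are Kähler currents with positive Lelong number at $x$, i.e.\ currents in $[\eta_{t_\infty,0}]$ that are subsolutions away from small neighbourhoods of the singular set. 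Lemma~\ref{lem:logbar-local-subsolution} supplies local subsolutions near the proper analytic set where the current is singular, and the gluing of \cite[Section~7]{ChuLeeTakahashi} via regularized maxima then produces a global smooth $v$ with $P_\omega(\eta_{t_\infty,0}+\ii\partial\bar\partial v)<\theta$. Applying Lemma~\ref{lem:logbar-twisted-phase} with $f=\tau_{t_\infty}$ puts $t_\infty\in I$, so by connectedness $I=[0,\infty)$, and at $t=0$ we obtain \eqref{eq:logbar-signed-solution}.

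\textbf{Main obstacle.} I expect the hardest step to be the mass concentration with a negative constant. Chen's argument uses positivity of the right-hand side to bound the volume of $\Psi$ from below, and Lemma~\ref{lem:logbar-volume} is designed to replace this. The other delicate point is checking that the inductive subvariety argument does not use the nonnegativity of $\int_MG_n$. Here I would rely on Lemma~\ref{lem:logbar-local-subsolution} and choose $\epsilon_{n,\theta}$ small relative to the constants in Lemmas~\ref{lem:logbar-twisted-phase} and~\ref{lem:logbar-volume}, so that all twisted right-hand sides stay above $-\epsilon(n,\theta,\Theta_0)$.
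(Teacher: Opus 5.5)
Your (iii)$\Rightarrow$(i) argument is fine. Expanding $G_p\bigl(\eta_u+(\eta_{t,0}-\eta)\bigr)$ binomially in the semipositive form $\eta_{t,0}-\eta$ is equivalent to the paper's argument, which notes that $\eta_u+(\eta_{t,0}-\eta)$ stays a strict subsolution and that $\int_MG_n$ is nondecreasing along the family.

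The gap is in (ii)$\Rightarrow$(iii), at the step that should give a strict subsolution at $t_*=\inf I$. First, a right-hand side $R\,\mathrm{bump}-1+c$ is roughly $-1+c_t$ away from the bump. That is in general far below the threshold $-\epsilon(n,\theta,\Theta_0)$ of Lemma~\ref{lem:logbar-twisted-phase}, which is always $<1/2$ in absolute value. So you cannot solve it with the bound $Q_\omega<\Theta_0$ that Lemma~\ref{lem:logbar-volume} requires. The paper concentrates only a fixed small amount of mass: $R_{r,\ell}=\sigma_{r,\ell}^{2n}/\Omega^{2n}$ with $\sigma_{r,\ell}=\Omega+\ell^2\ii\partial\bar\partial\psi_r\geq(1-\beta)\Omega$ and $\ell$ fixed. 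Then $R_{r,\ell}-1+C_Kc_t>-\tfrac12\epsilon(2n,\widetilde\theta,\Theta_1)$ everywhere.

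More fundamentally, concentrating mass at a single point $x\in M$ gives no uniform gain. The regularized-maximum gluing needs a current $T\in[\eta_{t_*,0}]-\delta[\omega]$, with $\delta>0$ independent of $t$, whose local regularizations satisfy $P_\omega\leq\theta$ and $Q_\omega\leq\Theta$. Only then is $T+\delta\omega$ strict off the Lelong set $E_\epsilon$, where Lemma~\ref{lem:logbar-local-subsolution} takes over. A weak limit of solutions in $[\eta_{t,0}]$ with positive Lelong number at $x$ satisfies $P_\omega\leq\theta$ only weakly. You already had that from the smooth subsolutions $\eta_t$, $t\in I$, and nothing improves away from $x$. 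Your phrase ``subsolutions away from small neighbourhoods of the singular set'' hides exactly the missing strictness.

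The missing idea is Chen's diagonal trick, which is what the paper does:
\begin{enumerate}
\item On $M\times M$, solve the twisted equation at a shifted phase $\widetilde\theta>\theta$ in the class $[\pi_1^*\eta_t+K\pi_2^*\omega]$, with right-hand side concentrating along the diagonal $\Delta$. Here $K$ is chosen so that \eqref{eq:logbar-product-compatibility} holds. This is also why $\epsilon_{n,\theta}$ must satisfy $\epsilon_{n,\theta}\leq\frac18C_K^{-1}\epsilon(2n,\widetilde\theta,\Theta_1)$, not only $\epsilon_{n,\theta}\leq\epsilon(n,\theta,\Theta_0)$.
\item Lemma~\ref{lem:logbar-volume} gives diagonal mass $\geq d_0[\Delta]$, with $d_0$ uniform in $t$.
\item The fibre integral $T_{t,r}=\frac{1}{(n+1)V_K}(\pi_1)_*\Ima(\eta_{t,r}+\ii\pi_2^*\omega)^{n+1}$ lies in $[\eta_{t,0}]$ and satisfies $P_\omega<\theta$, $Q_\omega<\Theta$ by Chen's Lemma~5.12 and Proposition~5.13.
\item Because $\Delta$ lies over every point of $M$, the truncation lemmas of Chu--Lee--Takahashi turn this diagonal mass into a gain of $\delta\omega$ everywhere. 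This yields $T_t\in[\eta_{t,0}]-\delta[\omega]$ with $\delta$ uniform as $t\downarrow t_*$.
\end{enumerate}
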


\begin{proof}
The implication (i)$\Rightarrow$(ii) is immediate.
For (iii)$\Rightarrow$(i), let $\alpha_t=\eta_{t,0}-\eta\geq0$.
The representative $\widehat\eta+\alpha_t$ is a strict subsolution,
so \eqref{eq:logbar-proper-stability} holds.
Moreover, $\dot\alpha_t\geq0$ and
\[
 \frac{d}{dt}\int_MG_{n,\theta}(\widehat\eta+\alpha_t,\omega)
 =n\int_M\dot\alpha_t\wedge
 G_{n-1,\theta}(\widehat\eta+\alpha_t,\omega)\geq0.
\]
This proves \eqref{eq:logbar-ambient-stability}.

For (ii)$\Rightarrow$(iii), fix a test family as in (ii). We follow the
proof of~\cite[Proposition 5.2]{Chen} and ~\cite[Theorem~1.3]{ChuLeeTakahashi}.

We first consider a continuity path.
Choose the phase constants as follows:
\begin{equation}
\label{eq:logbar-phase-parameters}
 \begin{gathered}
 \sigma=\min\left\{\frac{\theta}{16n},
 \frac{\pi-\theta}{16(n+1)},\frac\pi{32}\right\},\qquad
 \Theta_0=\theta+\sigma,\quad\Theta=\theta+2\sigma,\\
K=\cot(2\sigma),\quad
 \widetilde\theta=\theta+2n\sigma,\quad
 \Theta_1=\widetilde\theta+\sigma,\quad \Theta_2=\widetilde\theta+2\sigma.
 \end{gathered}
\end{equation}
They lie in $(0,\pi)$ and satisfy
$\Theta_0-\theta<\theta/(n-1)$ and
$\Theta_1-\widetilde\theta<\widetilde\theta/(2n-1)$.
Let
\begin{equation}
\label{eq:logbar-numerical-threshold}
 \begin{split}
 C_K&=\frac{\sin\theta}{\sin\widetilde\theta}(1+K^2)^{n/2}, \qquad \epsilon_{n,\theta}
 =\frac18\min\left\{
 \epsilon(n,\theta,\Theta_0),\,
 C_K^{-1}\epsilon(2n,\widetilde\theta,\Theta_1)\right\}.
 \end{split}
\end{equation}
For $t\geq0$, write
$c_t=\int_MG_{n,\theta}(\eta_{t,0},\omega)/\int_M\omega^n$.
Thus $c_t\geq\tau>-\epsilon_{n,\theta}$.
Let $I$ be the set of $t\subset[0,+\infty]$ for which
\[
 G_{n,\theta}(\eta_t,\omega)=c_t\omega^n,
 \qquad \eta_t\in[\eta_{t,0}]\cap\Gamma_{\omega,\theta,\Theta_0}
\]
has a smooth solution. Lemma~\ref{lem:logbar-twisted-phase} shows that
$I$ contains all sufficiently large $t$ and is relatively open.
If $s\in I$, then $[s,+\infty)\subset I$, since for any $s'>s$, adding
$\eta_{s',0}-\eta_{s,0}>0$ to a strict subsolution at $s$ gives one at
$s'$. Let $t_*=\inf I$.

We then construct a equation on the product.
For $t\in I$ close to $t_*$, consider $X=M\times M$ and
\[
 \Omega=\pi_1^*\omega+\pi_2^*\omega,\qquad
 \widetilde\eta_t=\pi_1^*\eta_t+K\pi_2^*\omega.
\]
By our choice, $\Theta_0+2(n-1)\sigma=\widetilde\theta-\sigma$, hence $P_\Omega(\widetilde\eta_t)<\widetilde\theta$.
Also $Q_\Omega(\widetilde\eta_t)<\Theta_1$.
Let $\Delta\subset X=M\times M$ be the diagonal. Choose a finite
open cover $\{U_j\}$ of $X$, local holomorphic generators
$\{f_{j,k}\}_k$ of the ideal of $\Delta$ on $U_j$, and a 
smooth partition of unity $\{\rho_j\}$ subordinate to $\{U_j\}$. Set
\[
 \psi_r=\log\left(\sum_j\rho_j\sum_k|f_{j,k}|^2+r^2\right),
 \qquad \sigma_{r,\ell}=\Omega+\ell^2\ii\partial\bar\partial\psi_r\in[\Omega].
\]
Choose $0<\beta<1$ so small that
$(1-\beta)^{2n}-1>-\epsilon(2n,\widetilde\theta,\Theta_1)/4$.
Then choose $\ell>0$ sufficiently small that
$\sigma_{r,\ell}\geq(1-\beta)\Omega$ for all sufficiently small $r$.
This positive $\ell$ is fixed independently of $t$.
Define
\begin{equation}
\label{eq:logbar-product-rhs}
 R_{r,\ell}=\frac{\sigma_{r,\ell}^{2n}}{\Omega^{2n}},\qquad
 F_{t,r}=R_{r,\ell}-1+C_Kc_t.
\end{equation}
The identity $K+\ii=(1+K^2)^{1/2}e^{2\ii\sigma}$ gives
\begin{equation}
\label{eq:logbar-product-compatibility}
 \begin{split}
 \int_XG_{2n}^{\widetilde\theta}(\widetilde\eta_t,\Omega)
 &=\binom{2n}{n}C_K
 \left(\int_MG_{n,\theta}(\eta_t,\omega)\right)
 \left(\int_M\omega^n\right)\\
 &=C_Kc_t\int_X\Omega^{2n}=\int_XF_{t,r}\Omega^{2n}.
 \end{split}
\end{equation}
Furthermore,
\[
 F_{t,r}\geq(1-\beta)^{2n}-1-C_K\epsilon_{n,\theta}
 >-\tfrac12\epsilon(2n,\widetilde\theta,\Theta_1).
\]
When $c_t<0$, the integral in
\eqref{eq:logbar-product-compatibility} is negative. Thanks to Lemma~\ref{lem:logbar-twisted-phase}, we can obtain a smooth closed
form $\eta_{t,r}\in[\widetilde\eta_t]$ satisfying
\begin{equation}
\label{eq:logbar-product-equation}
 G_{2n}^{\widetilde\theta}(\eta_{t,r},\Omega)=F_{t,r}\Omega^{2n},\qquad
 P_\Omega(\eta_{t,r})<\widetilde\theta,\quad Q_\Omega(\eta_{t,r})<\Theta_1.
\end{equation}

Apply Lemma~\ref{lem:logbar-volume} with $d=2n$,
$\theta=\widetilde\theta$ and $c_0=C_K\epsilon_{n,\theta}$.
There are constants $a,b>0$, independent of $t,r$, such that
\begin{equation}
\label{eq:logbar-uniform-product-mass}
 \Psi_{t,r}:=\eta_{t,r}-\cot \Theta_2\,\Omega\geq a\Omega,\qquad
 \Psi_{t,r}^{2n}\geq b\sigma_{r,\ell}^{2n}.
\end{equation}
This replaces the estimate obtained from a positive right-hand side
in \cite[equation~(5.2)]{ChuLeeTakahashi}. In particular, both the
concentration parameter and the lower bound in
\eqref{eq:logbar-uniform-product-mass} remain fixed as $t\downarrow t_*$.

We now apply the mass concentration, truncation and gluing arguments of ~\cite[Section 5]{Chen} and ~\cite[Sections~5--7]{ChuLeeTakahashi}. We specify the estimates that
allow those arguments to be used here.

Choose $C_0>\max\{0,-\cot\Theta_2\}$ and let
$H_{t,r}=\eta_{t,r}+C_0\Omega$. Then $H_{t,r}$ are K\"ahler forms
also satisfying \eqref{eq:logbar-uniform-product-mass}
with $\Psi_{t,r}$ replaced by $H_{t,r}$. Since
\(
  [H_{t,r}]
  =\pi_1^*[\eta_{t,0}]
   +K\pi_2^*[\omega]+C_0[\Omega],
\) we have
\[
  \int_{M\times M} H_{t,r}^k\wedge\Omega^{2n-k}\leq C_k,
  \qquad 1\leq k\leq n,
\]
where $C_k$ is independent of $r$ and of $t\in I$
sufficiently close to $t_*$. Fix such a $t$.
By weak compactness of positive currents, there exists a
subsequence $r_j\downarrow0$ such that
\[
  H_{t,r_j}^k\rightharpoonup\Xi_{t,k},
  \qquad 1\leq k\leq n,
\]
where each $\Xi_{t,k}$ is a closed positive $(k,k)$-current.
By a similar argument as~\cite[p.~596]{Chen} or~\cite[Lemma~5.1]{ChuLeeTakahashi},
using \eqref{eq:logbar-uniform-product-mass}, we obtain
\[
 \Xi_{t,n}\geq d_0[\Delta],\qquad d_0>0,
\]
where $d_0$ is independent of $t$; and for $k\leq n-1$, $\Xi_{t,k}\wedge \Omega^{n-k}$ have no
mass on $\Delta$. For the $n$th power, we have the decomposition
\[
\begin{aligned}
 \eta_{t,r}^{n}
 ={}&H_{t,r}^{n}
 +\sum_{\substack{2\leq j\leq n\\ j\ \mathrm{even}}}
   \binom nj C_0^j H_{t,r}^{n-j}\wedge \Omega^j-\sum_{\substack{1\leq j\leq n\\ j\ \mathrm{odd}}}
   \binom nj C_0^j H_{t,r}^{n-j}\wedge \Omega^j.
\end{aligned}
\]
Thus, if $\Upsilon_{t,n}$ denotes the weak limit of
$\eta_{t,r}^{n}$, then
\(
 \mathbf{1}_{\Delta}\Upsilon_{t,n}
 =\mathbf{1}_{\Delta}\Xi_{t,n}
 \geq d_0[\Delta].
\)

We consider the fiber integral 
\begin{equation}
\label{eq:logbar-fibre-integral}
 T_{t,r}=\frac{1}{(n+1)V_K}(\pi_1)_*
 \Ima(\eta_{t,r}+\ii\pi_2^*\omega)^{n+1},\qquad
 V_K=\Ima(K+\ii)^n\int_M\omega^n>0.
\end{equation}
Then $T_{t,r}\in[\eta_{t,0}]$. By
\cite[Lemma~5.12 and Proposition~5.13]{Chen}, it satisfies
$P_\omega(T_{t,r})<\theta$ and $Q_\omega(T_{t,r})<\Theta$.
These estimates use only
the second and third conditions in \eqref{eq:logbar-product-equation}.

Apply the truncation estimates
\cite[Lemma~5.5, Corollary~5.6 and Lemmas~5.7--5.11]{ChuLeeTakahashi}
in the form for both $P_\omega$ and $Q_\omega$ described in
~\cite[Section~7]{ChuLeeTakahashi}. 
The uniform diagonal mass and the phase bounds give a fixed $\delta>0$ and currents
\[
 T_t\in[\eta_{t,0}]-\delta[\omega]
\]
whose local regularizations satisfy $P_\omega\leq\theta$,
$Q_\omega\leq\Theta$; see 
\cite[Definition~5.3 and Theorem~5.4]{ChuLeeTakahashi}. 
The local regularizations are taken on a fixed finite coordinate
cover, with a range of convolution radius independent of $t$.
The constant $\delta>0$ is also uniform in $t$.

Write $T_t=\eta_{t,0}-\delta\omega
+\sqrt{-1}\partial\bar\partial u_t$, with $\sup_Mu_t=0$.
Since $T_t\geq\cot\Theta\,\omega$ and $\eta_{t,0}$ varies smoothly,
we have $\sqrt{-1}\partial\bar\partial u_t\geq-C\omega$
uniformly for $t$ close to $t_*$.
Choose $t_j\in I$ with $t_j\downarrow t_*$.
After passing to a subsequence, $u_{t_j}\to u$ in $L^1(M)$ and
\(  T_{t_j}\rightharpoonup
  T:=\eta_{t_*,0}-\delta\omega+\sqrt{-1}\partial\bar\partial u
  \in[\eta_{t_*,0}]-\delta[\omega].
\)
For each fixed convolution radius, the regularized local potentials
converge in $C^\infty$ on relatively compact subsets where convolution
is defined. Consequently, the local regularizations of $T$ satisfy
$P_\omega\leq\theta$ and $Q_\omega\leq\Theta$.

Finally choose a sufficiently small
fixed Lelong number  $\epsilon>0$. The set
$E_{\epsilon}=\{x:\nu(T-\cot(\Theta)\omega,x)\geq\epsilon\}$ is a proper
analytic subset. Lemma~\ref{lem:logbar-local-subsolution} gives a smooth
strict subsolution near this set. The regularised maximum construction
of \cite[Lemmas~6.2--6.3 and Section~7]{ChuLeeTakahashi} then gives a
global smooth strict subsolution in $[\eta_{t_*,0}]$.
Lemma~\ref{lem:logbar-twisted-phase} implies $t_*\in I$.
Relative openness forces $t_*=0$, and proves (iii).
\end{proof}

\begin{corollary}
\label{cor:logbar-small-defect}
Suppose that $n\geq3$ and
$G_{p,\theta}(\eta,\omega)>0$ for $1\leq p\leq n-2$. Assume also that
\[
 \int_DG_{n-1}^\theta(\eta,\omega)>0
 \quad\text{for every prime divisor }D,\qquad
 \int_M G_{n-1,\theta}(\eta,\omega)\wedge\omega>0.
 \]
If $\int_MG_{n,\theta}(\eta,\omega)/\int_M\omega^n>
-\epsilon_{n,\theta}$, then $[\eta]$ contains the
solution in Theorem~\ref{thm:logbar-signed-criterion}.
\end{corollary}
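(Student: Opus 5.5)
The plan is to reduce Corollary~\ref{cor:logbar-small-defect} to the implication (ii)$\Rightarrow$(iii) of Theorem~\ref{thm:logbar-signed-criterion}. We apply it to the simplest test family $\eta_{t,0}=\eta+t\omega$, $t\geq0$. This family is admissible: it is increasing in $t$, and $\eta+t\omega>\cot(\theta/n)\omega$ for all large $t$ by compactness. It remains to verify the two conditions in (ii) along this family, namely \eqref{eq:logbar-proper-stability} for every $t\geq0$ and \eqref{eq:logbar-ambient-stability} with $\tau=\int_MG_{n,\theta}(\eta,\omega)/\int_M\omega^n>-\epsilon_{n,\theta}$.

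\textbf{Step 1: positivity on proper subvarieties.} I would use the binomial expansion already employed in the proof of Lemma~\ref{lem:Kahler-testing}: for a $p$-dimensional irreducible $W$,
\[
\int_WG_p(\eta+t\omega)=\sum_{j=0}^p\binom pj t^j\int_WG_{p-j}(\eta)\wedge\omega^j .
\]
Consider first $p\leq n-2$. Every term $G_{p-j}(\eta)\wedge\omega^j$ with $p-j\geq1$ is a strictly positive top form on $W_{\mathrm{reg}}$, because $G_{p-j}(\eta)$ is positive on every $(p-j)$-dimensional subspace and we are in the hypothesis range $p-j\leq n-2$. The $j=p$ term is $t^p\int_W\omega^p\geq0$. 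Hence the whole integral is positive. Now take $p=n-1$, so $W=D$ is a prime divisor. The $j=0$ term $\int_DG_{n-1}(\eta)$ is positive by hypothesis. The terms with $1\leq j\leq n-2$ involve $G_{n-1-j}$ with $n-1-j\leq n-2$, so they are positive as before, and the last term is nonnegative. I expect this to be essentially routine. The only care needed is that pointwise positivity of $G_q|_V$ for $q<p$ gives positivity of $G_q\wedge\omega^{p-q}$ restricted to $p$-planes. This follows by diagonalizing $\omega$ on the $p$-plane and summing over $q$-dimensional coordinate subplanes, just as in the proof of Lemma~\ref{lem:Kahler-testing}.

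\textbf{Step 2: ambient monotonicity.} Differentiating in $t$,
\[
\frac{d}{dt}\int_MG_n(\eta+t\omega)=n\int_MG_{n-1}(\eta+t\omega)\wedge\omega
=n\sum_{j}\binom{n-1}{j}t^j\int_MG_{n-1-j}(\eta)\wedge\omega^{j+1}.
\]
The $j=0$ term is positive by the hypothesis $\int_MG_{n-1,\theta}(\eta,\omega)\wedge\omega>0$. The other terms are positive by the pointwise positivity of $G_q$ for $q\leq n-2$, together with $G_0=1$. Therefore $t\mapsto\int_MG_n(\eta_{t,0})$ is increasing, which gives \eqref{eq:logbar-ambient-stability} with the given $\tau$.

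Having verified (ii), Theorem~\ref{thm:logbar-signed-criterion} yields the smooth $\eta_u\in[\eta]$ with $G_{n,\theta}(\eta_u,\omega)=\tau\omega^n$ and $\eta_u\in\Gamma_{\omega,\theta,\Theta_0}$. The main obstacle is the bookkeeping in Step 1 for $p=n-1$. The subtle point is that only the leading term relies on the divisor hypothesis, while every lower term relies on strict pointwise positivity, and that positivity is available exactly up to degree $n-2$; this is where $n\geq3$ and the hypotheses are used. Everything else is a direct citation of the theorem.
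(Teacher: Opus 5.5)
Your proposal is correct and follows the paper's proof. The paper also uses the test family $\eta+t\omega$, expands $\int_W G_{p,\theta}(\eta+t\omega,\omega)$ binomially, reads off positivity for $p<n$ and $\int_M G_{n,\theta}(\eta+t\omega,\omega)\geq\tau\int_M\omega^n$ from the signs of the coefficients, and then invokes (ii)$\Rightarrow$(iii) of Theorem~\ref{thm:logbar-signed-criterion}; your derivative computation in Step 2 is the same expansion written via $\frac{d}{dt}$.
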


\begin{proof}
Consider the test family $\eta+t\omega$. With $G_{0,\theta}=1$, one has
\begin{equation}
\label{eq:logbar-canonical-expansion}
 \int_WG_{p,\theta}(\eta+t\omega,\omega)
 =\sum_{k=0}^p\binom pk t^{p-k}
 \int_WG_{k,\theta}(\eta,\omega)\wedge\omega^{p-k}.
\end{equation}
For $p<n$, all coefficients are nonnegative and the constant term is
positive. For $p=n$, every nonconstant coefficient is nonnegative.
Thus \eqref{eq:logbar-proper-stability} and
\eqref{eq:logbar-ambient-stability} hold, and the theorem applies.
\end{proof}

\begin{samepage}
\subsection{Construction of the logarithmic singular subsolution}

\begin{theorem}
\label{thm:logbar-barrier}
\label{thm:log-subsolution}
Assume $(\chi,\omega)$ satisfies the semisubsolution condition, $N>0$, and let
$Z=\bigcup_{i=1}^ND_i$. There are positive rational numbers $b_i$,
a smooth closed form $\widehat\chi$, and $\varphi\in C^\infty(M,\mathbb R)$
such that
\[
 [\widehat\chi]=[\chi]-\sum_i b_i[D_i],\qquad
 \widehat\chi\in\Gamma_{\omega,\theta,\Theta_0},
\]
and
\begin{equation}
\label{eq:logbar-rho}
 \rho=\varphi+\frac1{2\pi}\sum_i b_i\log\|s_i\|_{h_i}^2
\end{equation}
is quasi-plurisubharmonic, has analytic singularities precisely along
$Z$, and satisfies
\begin{equation}
\label{eq:logbar-current-identity}
 \chi+\ii\partial\bar\partial\rho
 =\widehat\chi+\sum_i b_i[D_i]\geq\widehat\chi.
\end{equation}
On $M\setminus Z$,  $\chi+\ii\partial\bar\partial\rho
 =\widehat\chi$. 
\end{theorem}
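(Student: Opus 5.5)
We take $E=\sum_i a_iD_i$ from Corollary~\ref{cor:common-divisorial-direction}, with $a_i\in\mathbb Q_{>0}$, and set $\eta=\chi_s=\chi-s\theta_E$ for a small rational $s>0$, to be fixed below. The plan is to show that $[\chi_s]$ admits a smooth solution of the signed twisted equation from Theorem~\ref{thm:logbar-signed-criterion}, and then to move the curvature form $s\theta_E$ back into $\chi$ using the Poincar\'e--Lelong formula \eqref{eq:logbar-PL}. The coefficients will be $b_i:=sa_i$, which are positive rationals.

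\textbf{Step 1: choosing $s$.} Lemma~\ref{lem:logbar-perturbation} supplies, for all small $s>0$, the hypotheses of Corollary~\ref{cor:logbar-small-defect}:
\begin{itemize}
\item[] $G_p(\chi_s)>0$ in the strong sense for $1\le p\le n-2$;
\item[] $\int_DG_{n-1}(\chi_s)>0$ for every prime divisor $D$;
\item[] $\int_MG_{n-1}(\chi_s)\wedge\omega>0$.
\end{itemize}
The only remaining hypothesis is the lower bound $\tau>-\epsilon_{n,\theta}$ on the normalized top integral. By \eqref{eq:logbar-negative-defect},
\[
\tau=\frac{\binom n2 s^2Q(E,E)+O(s^3)}{V_\omega}.
\]
This is negative but of order $s^2$, so it exceeds $-\epsilon_{n,\theta}$ once $s$ is small. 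Since $\epsilon_{n,\theta}$ depends only on $n$ and $\theta$, this is a genuine smallness condition on $s$. We fix such a rational $s$. When $n=2$ the same argument applies, because the conditions on $G_p$ for $p\le n-2$ are then vacuous and Theorem~\ref{thm:logbar-signed-criterion} holds for $n\ge2$; we invoke it directly with the test family $\eta+t\omega$, as in the proof of the corollary.

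\textbf{Step 2: the subsolution and $\rho$.} Corollary~\ref{cor:logbar-small-defect} gives a smooth $\widehat\chi=\chi_s+\ii\partial\bar\partial\varphi\in\Gamma_{\omega,\theta,\Theta_0}$, where $\varphi\in C^\infty(M)$. In cohomology,
\[
[\widehat\chi]=[\chi]-s\sum_i a_i[\theta_i]=[\chi]-\sum_i b_i[D_i].
\]
Define $\rho$ by \eqref{eq:logbar-rho}. We must reconcile normalizations: \eqref{eq:logbar-PL} carries the factor $\tfrac{\ii}{2\pi}\partial\bar\partial$, so in the convention of the statement $\frac{1}{2\pi}\ii\partial\bar\partial\log\|s_i\|^2=[D_i]-\theta_i$. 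Then
\[
\chi+\ii\partial\bar\partial\rho=\chi+\ii\partial\bar\partial\varphi+\sum_i b_i([D_i]-\theta_i)=\widehat\chi+\sum_i b_i[D_i].
\]
This current is at least $\widehat\chi$, and it equals $\widehat\chi$ off $Z$ because each $[D_i]$ is supported on $D_i$. Quasi-plurisubharmonicity follows because $\ii\partial\bar\partial\rho\ge\widehat\chi-\chi\ge -C\omega$. The analytic singularities are exactly along $Z$, since every $b_i>0$ and $\varphi$ together with the metrics $h_i$ are smooth.

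\textbf{Main obstacle.} The substantive input is entirely in Step 1, namely the applicability of Theorem~\ref{thm:logbar-signed-criterion} with negative $\tau$, and that is already established. What remains is bookkeeping. First, the smallness of $s$ must be compatible with each of the finitely many conditions in Lemma~\ref{lem:logbar-perturbation}; this holds because that lemma's conditions are uniform over all prime divisors, thanks to the gap in Corollary~\ref{cor:uniform-gap}. Second, $s$ must be taken rational so that the $b_i$ are rational.
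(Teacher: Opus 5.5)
Your proposal is correct and follows the paper's proof essentially verbatim. You perturb to $\chi_s=\chi-s\theta_E$ with small rational $s$, invoke Lemma~\ref{lem:logbar-perturbation} and Corollary~\ref{cor:logbar-small-defect} (the normalized top integral is $O(s^2)$ from below) to get $\widehat\chi\in\Gamma_{\omega,\theta,\Theta_0}$, and recover $\rho$ with $b_i=sa_i$ via the $\partial\bar\partial$-lemma and Poincar\'e--Lelong. Your separate treatment of $n=2$, where the corollary is stated only for $n\ge3$, is a small addition the paper omits; the paper in turn makes the analytic-singularity claim precise by choosing $q$ with $qb_i\in\mathbb N$, which is where the rationality of the $b_i$ is actually used.
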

\end{samepage}

\begin{proof}
For $N>0$,  we can apply Lemma~\ref{lem:logbar-perturbation} and
Corollary~\ref{cor:logbar-small-defect} apply to $\chi_s$ for a
sufficiently small rational $s>0$, since its normalised top-degree
integral tends to zero from below. They give
$\widehat\chi\in[\chi]-s[E]$ with
$\widehat\chi\in\Gamma_{\omega,\theta,\Theta_0}$.
Let $b_i=sa_i$. The $\partial\bar\partial$-lemma gives a smooth
function $\varphi$ such that
$\widehat\chi=\chi-\sum_i b_i\theta_i+
\ii\partial\bar\partial\varphi$.
Formula~\eqref{eq:logbar-PL} proves
\eqref{eq:logbar-current-identity}.

Choose $q\in\mathbb N$ with $qb_i\in\mathbb N$ for every $i$.
For local defining functions $f_i$ of the divisors,
$\rho=(2\pi q)^{-1}\log|\prod_i f_i^{qb_i}|^2$
up to a smooth function. Thus $\rho$ has the asserted analytic
singularities and is quasi-plurisubharmonic.
\end{proof}

\section{Uniform estimates and locally smooth potential solutions}
\label{sec:bounded-LYZ}

Throughout this section, we work under the assumptions of
Theorem~\ref{thm:regularity of the weak solution}. Recall 
$D_1,\ldots,D_N$ are the destabilizing prime divisors and 
$
 Z=\bigcup_{i=1}^N D_i.
$
By Theorem~\ref{thm:log-subsolution}, after adding a constant to
$\rho$ we may assume that $\sup_M\rho=0$, and there is a smooth
strict subsolution $\widehat\chi$ such that
\begin{equation}\label{eq:section7-barrier}
 \chi+\ii\partial\bar\partial\rho
 =\widehat\chi+\sum_i b_i[D_i]
 \quad\text{on }M,
 \qquad
 \chi+\ii\partial\bar\partial\rho=\widehat\chi
 \quad\text{on }M\setminus Z.
\end{equation}
In particular, $\rho\in C^\infty(M\setminus Z)$ and
$\rho\to-\infty$ along $Z$. By its construction in Section~6, we 
 fix a constant $\Theta_*\in( \theta, \pi)$, such that
\begin{equation}\label{eq:hat-chi-phase}
 P_\omega(\widehat\chi)<\theta,\qquad
 Q_\omega(\widehat\chi)<\Theta_*.
\end{equation}

\subsection{The approximating twisted LYZ equations}

For $t>0$, set $\chi_t:=\chi+t\omega$ and
\begin{equation}\label{eq:ct-LYZ}
 c_t:=\frac{\int_MG_{n,\theta}(\chi_t,\omega)}
 {\int_M\omega^n}.
\end{equation}

\begin{lemma}\label{lem:ct-properties}
There exist constants $c,C,t_0>0$ such that
\begin{equation}\label{eq:ct-linear}
 0<ct\leq c_t\leq Ct,\qquad 0<t\leq t_0.
\end{equation}

\end{lemma}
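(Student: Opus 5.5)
The plan is to expand $c_t$ as a polynomial in $t$ and read off its leading behaviour. Using $\frac{d}{dt}G_{p,\theta}(\chi+t\omega)=p\,G_{p-1,\theta}(\chi+t\omega)\wedge\omega$, which follows from $P'_{m,\theta}=mP_{m-1,\theta}$ as in the proof of Proposition~\ref{prop:twisted-existence}, we get
\[
 c_t\int_M\omega^n=\int_MG_n(\chi_t)=\sum_{k=0}^n\binom nk t^{n-k}\int_MG_k(\chi)\wedge\omega^{n-k}.
\]
This is the same expansion already used for $A_t$ in the proof of Lemma~\ref{lem:Kahler-testing}. In the present section $\theta=\theta_0$, so the constant term $\int_MG_n(\chi)$ vanishes. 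The linear coefficient is $n\int_MG_{n-1}(\chi)\wedge\omega=nL([\omega])$.

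Next we show that every coefficient is nonnegative and that the linear one is strictly positive. Under the semisubsolution assumption \eqref{eq:semisubsolution}, $\chi\in\overline\Gamma_{\omega,\theta}$. Lemma~\ref{lem:descending-positivity} then gives pointwise positivity $G_k(\chi)>0$ for $1\le k\le n-2$ and $G_{n-1}(\chi)\ge0$. Hence $\int_MG_k(\chi)\wedge\omega^{n-k}\ge0$ for $1\le k\le n-1$, and the $k=0$ term is $t^n\int_M\omega^n>0$. This could also be deduced from the semistability condition \eqref{eq:LYZ-subvariety-semistable} with the K\"ahler class $[\omega]$. For strict positivity of the linear term, apply Theorem~\ref{thm:modified-nef} with $\beta=[\omega]$:
\[
 L([\omega])\ge c\int_M\omega^n>0,
\]
which the paper already noted in the proof of Theorem~\ref{thm:seminegativity}.

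Finally we combine the two observations. Dropping the nonnegative higher-order terms gives the lower bound
\[
 c_t\ge t\cdot\frac{nL([\omega])}{\int_M\omega^n}.
\]
For the upper bound, $c_t/t$ is a polynomial in $t$ with no constant remainder, so $c_t\le Ct$ on $(0,t_0]$ with $C$ the sum of the absolute values of the coefficients for, say, $t_0=1$.

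There is no real obstacle; the only point requiring care is verifying that the constant term vanishes and the linear coefficient is strictly positive. This is exactly where the compatibility normalization $\theta=\theta_0$ and the quantitative estimate of Theorem~\ref{thm:modified-nef} enter.
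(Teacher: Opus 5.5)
Your proposal is correct and follows the paper's proof. Both arguments expand $\int_M G_n(\chi+t\omega)$ in powers of $t$, use $\int_M G_n(\chi)=0$ (since $\theta=\theta_0$) to remove the constant term, and invoke Lemma~\ref{lem:descending-positivity} for nonnegativity of the remaining coefficients. Both then apply Theorem~\ref{thm:modified-nef} with $\beta=[\omega]$ to get strict positivity of the linear coefficient $n\int_M G_{n-1}(\chi)\wedge\omega$.
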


\begin{proof}
Since
$\int_MG_n(\chi)=0$,
\[
 c_t\int_M\omega^n
 =nt\int_MG_{n-1}(\chi)\wedge\omega
 +\sum_{k=0}^{n-2}\binom nk t^{n-k}
   \int_MG_k(\chi)\wedge\omega^{n-k}.
\]
The semisubsolution condition and
Lemma~\ref{lem:descending-positivity} make all terms on the right
nonnegative. Moreover, Theorem~\ref{thm:modified-nef}, applied to
$[\omega]$, gives
$\int_MG_{n-1}(\chi)\wedge\omega>0$. The conclusion follows
immediately.
\end{proof}

Since $\chi\in\overline\Gamma_{\omega,\theta}$, we have
$\chi+t\omega\in \Gamma_{\omega,\theta}$ for every $t>0$.
Proposition~\ref{prop:twisted-existence} therefore gives, for
$0<t\leq t_0$, a smooth function $u_t$ with
$\sup_Mu_t=0$, such that
\begin{equation}\label{eq:approx-LYZ}
 \eta_t:=\chi+t\omega+\ii\partial\bar\partial u_t
 \in\Gamma_{\omega,\theta},\qquad
 G_{n,\theta}(\eta_t,\omega)=c_t\omega^n.
\end{equation}
If $\lambda_i=\cot\theta_i, 1\le i\le n$ are eigenvalues of $\eta_t$ with
respect to $\omega$, then
\begin{equation}\label{eq:phase-density}
 c_t=
 \frac{\sin\!\left(\theta-\sum_i\theta_i\right)}
 {\sin\theta\prod_i\sin\theta_i}.
\end{equation}
Since $c_t>0$ and $\eta_t\in\Gamma_{\omega,\theta}$, we have
\begin{equation}\label{eq:full-phase-less-theta}
 Q_\omega(\eta_t)=\sum_i\theta_i<\theta.
\end{equation}
In particular, 
\begin{equation}\label{eq:eta-lower}
 \eta_t>\cot\theta\,\omega ,
\end{equation}
which gives a uniform
quasi-plurisubharmonic lower bound for the potentials $u_t$.

\subsection{A local \(C^0\) estimate}

We use the following standard lower-contact-set form of the ABP
estimate.

\begin{lemma}[ABP estimate]\label{lem:ABP-LYZ}
Let $\Omega\subset\mathbb R^N$ be a bounded open set with
$\operatorname{diam}(\Omega)\leq D$, and 
$w\in C^2(\Omega)\cap C^0(\overline\Omega)$. Suppose that
$w(x_0)=m$ and $w\geq m+\varepsilon$ on $\partial\Omega$. If
\[
 \Gamma_w:=\left\{x\in\Omega:\ 
 \exists\,\xi\in B_{\varepsilon/(2D)}(0)
 \text{ such that }
 w(y)\geq w(x)+\xi\cdot(y-x)\ \forall y\in\Omega\right\},
\]
then $D^2_{\mathbb R}w\geq0$,
$w<m+\varepsilon/2$ on $\Gamma_w$, and
\begin{equation}\label{eq:ABP-LYZ}
 |B_1(0)|\left(\frac{\varepsilon}{2D}\right)^N
 \leq\int_{\Gamma_w}\det_{\mathbb R}
       (D^2_{\mathbb R}w)\,dx .
\end{equation}
\end{lemma}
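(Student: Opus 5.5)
This is the classical Alexandrov--Bakelman--Pucci argument via lower supporting hyperplanes, and I would prove the three assertions in order.

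First I would check that $\Gamma_w$ has the stated properties. Let $x\in\Gamma_w$ with slope $\xi$, $|\xi|<\varepsilon/(2D)$. The affine function $\ell(y)=w(x)+\xi\cdot(y-x)$ touches $w$ from below at the interior point $x$. So $Dw(x)=\xi$, and by the second-order condition at an interior minimum of $w-\ell$ we get $D^2_{\R}w(x)\geq0$. Next I would evaluate the supporting inequality at $y=x_0$. This gives $m=w(x_0)\geq w(x)+\xi\cdot(x_0-x)\geq w(x)-|\xi|D$, hence $w(x)\leq m+|\xi|D<m+\varepsilon/2$.

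The main step is to show that the gradient image covers the small ball:
\[
B_{\varepsilon/(2D)}(0)\subset Dw(\Gamma_w).
\]
Fix $\xi$ with $|\xi|<\varepsilon/(2D)$ and minimize the continuous function $y\mapsto w(y)-\xi\cdot y$ over $\overline\Omega$. On $\partial\Omega$ we have
\[
w(y)-\xi\cdot y\geq m+\varepsilon-\xi\cdot y .
\]
At $x_0$ the value is $m-\xi\cdot x_0$. Their difference is at least $\varepsilon-|\xi|\,|y-x_0|>\varepsilon-\varepsilon/2>0$. Hence the minimum is attained at an interior point $x$. Minimality gives exactly $w(y)\geq w(x)+\xi\cdot(y-x)$ for all $y\in\Omega$, so $x\in\Gamma_w$, and $Dw(x)=\xi$ by differentiability.

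Finally I would apply the area formula to the $C^1$ map $Dw$ on $\Gamma_w$. Its Jacobian there is $\det D^2_{\R}w\geq0$. This gives
\[
|B_1(0)|\left(\frac{\varepsilon}{2D}\right)^N=\bigl|B_{\varepsilon/(2D)}(0)\bigr|\leq|Dw(\Gamma_w)|\leq\int_{\Gamma_w}\det_{\R}(D^2_{\R}w)\,dx ,
\]
which is \eqref{eq:ABP-LYZ}.

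The obstacle I anticipate is technical rather than conceptual. The set $\Gamma_w$ is relatively closed in $\Omega$, hence measurable. For a merely $C^1$ map $Dw$ I would use the area inequality $|Dw(A)|\leq\int_A|\det D(Dw)|$, which holds for locally Lipschitz maps. To apply it I would exhaust $\Gamma_w$ by compact subsets $\Gamma_w\cap\{\operatorname{dist}(\cdot,\partial\Omega)\geq1/k\}$, on each of which $w\in C^2$ up to the set and $Dw$ is Lipschitz, and then pass to the limit by monotone convergence.
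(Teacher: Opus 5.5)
Your proof is correct, and it is the standard lower-contact-set ABP argument that the paper relies on without writing out (the paper only calls the lemma standard). One minor inaccuracy: because the slope bound is strict, $\Gamma_w$ is the relatively closed contact set intersected with the open set $\{|Dw|<\varepsilon/(2D)\}$, so it is Borel but need not be relatively closed. This is harmless, and your exhaustion is not even needed, since $\Gamma_w\subset\{w\le m+\varepsilon/2\}$, which is a compact subset of $\Omega$ near which $Dw$ is Lipschitz.
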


The following lemma will be used for the uniform estimate.
\begin{lemma}\label{lem:pointwise-LYZ}
There exist $\varepsilon_0,R>0$, independent of sufficiently small
$t$, with the following property. Suppose
$\eta\in\Gamma_{\omega,\theta}$ satisfies
\[
 G_{n,\theta}(\eta,\omega)=c_t\omega^n,\qquad
 \eta\geq\widehat\chi+t\omega-\varepsilon_0\omega .
\]
Then
\begin{equation}\label{eq:Y-upper}
 \eta\leq R\omega .
\end{equation}
\end{lemma}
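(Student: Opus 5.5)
The plan is to compare the eigenvalues of $\eta$ with those of the fixed strict subsolution $\widehat\chi$. Then the explicit phase formula for $G_{n,\theta}$ turns the smallness of $c_t$ (Lemma~\ref{lem:ct-properties}) into an upper bound for the largest eigenvalue. No PDE argument is needed: the statement is purely pointwise.

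\emph{Step 1 (choice of $\varepsilon_0$).} By \eqref{eq:hat-chi-phase} and compactness of $M$, $P_\omega(\widehat\chi)\le\theta-2\sigma$ for some $\sigma>0$. Since $P_\omega$ depends continuously on the eigenvalues, I fix $\varepsilon_0>0$ so small that
\[
P_\omega(\widehat\chi-\varepsilon_0\omega)\le\theta-\sigma \quad\text{on } M .
\]

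\emph{Step 2 (eigenvalue comparison at a point).} Let $\lambda_1\le\cdots\le\lambda_n$ be the eigenvalues of $\eta$ and $\mu_1\le\cdots\le\mu_n$ those of $\widehat\chi-\varepsilon_0\omega$, both with respect to $\omega$. Since $\eta\ge\widehat\chi-\varepsilon_0\omega$ (the term $t\omega\ge0$ only helps), the min--max principle gives $\lambda_j\ge\mu_j$, hence $\theta_j:=\arccot\lambda_j\le\arccot\mu_j$. Dropping the largest eigenvalue, i.e.\ the smallest angle, gives
\[
\sum_{j<n}\theta_j\le\sum_{j<n}\arccot\mu_j=P_\omega(\widehat\chi-\varepsilon_0\omega)\le\theta-\sigma ,
\]
so $Q_\omega(\eta)\le\theta-\sigma+\theta_n$.

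\emph{Step 3 (use the equation).} There are two cases.
\begin{itemize}
\item If $\theta_n\ge\sigma/2$, then $\lambda_n\le\cot(\sigma/2)$ and we are done.
\item If $\theta_n<\sigma/2$, then $\theta-Q_\omega(\eta)\in[\sigma/2,\theta)$. Hence $\sin(\theta-Q_\omega(\eta))\ge m:=\min\{\sin(\sigma/2),\sin\theta\}>0$.
\end{itemize}
In the second case I use \eqref{eq:phase-density} (the same identity holds for any $\eta$ solving the equation), together with $\sin\theta_j\le1$ for $j<n$ and $\sin\theta_n=(1+\lambda_n^2)^{-1/2}$. This gives
\[
c_t=\frac{\sin(\theta-Q_\omega(\eta))}{\sin\theta\prod_i\sin\theta_i}\ge\frac{m}{\sin\theta}\sqrt{1+\lambda_n^2}.
\]
Lemma~\ref{lem:ct-properties} gives $c_t\le Ct\le Ct_0$, so $\lambda_n\le Ct_0\sin\theta/m$.

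Taking $R=\max\{\cot(\sigma/2),\,Ct_0\sin\theta/m\}$ proves \eqref{eq:Y-upper}. The constant $R$ is independent of $t\le t_0$ and of the point, since $\sigma$ and $m$ depend only on $\widehat\chi,\omega,\theta$.

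The only delicate point is Step~2: the ordered min--max comparison must be matched with the definition of $P_\omega$ as the maximal partial sum, which is the sum over all angles except the smallest. Everything else is the explicit phase identity.
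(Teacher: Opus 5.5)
Your proposal is correct and is essentially the paper's argument. Monotonicity of $P_\omega$ in the Hermitian order, which you justify via min--max, gives a uniform margin $P_\omega(\eta)\le\theta-\sigma$; a case split on the angle $\theta_n$, combined with the phase identity \eqref{eq:phase-density} and the bound $c_t\le Ct$ from Lemma~\ref{lem:ct-properties}, then bounds the top eigenvalue. The differences are cosmetic: the paper runs the case split for every index $i$ rather than only the largest eigenvalue, and your lower bound $\sin(\theta-Q_\omega(\eta))\ge\min\{\sin(\sigma/2),\sin\theta\}$ is slightly more careful than the paper's $\sin\delta$.
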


\begin{proof}
Since $\widehat\chi$ is a subsolution, after decreasing
$\varepsilon_0$ and $t_0$ we can find $\delta>0$ such that
\begin{equation}\label{eq:strict-phase-margin}
 P_\omega\bigl(\widehat\chi+(t-\varepsilon_0)\omega\bigr)
 \leq\theta-2\delta,\qquad 0<t\leq t_0.
\end{equation}
The operator $P_\omega$ is decreasing with respect to the Hermitian
matrix order, so the assumed lower bound for $\eta$ gives
$P_\omega(\eta)\leq\theta-2\delta$.

Choose $\omega$-unitary coordinates in which $\eta$ is diagonal and
write its eigenvalues as $\lambda_i=\cot\theta_i$,
$\theta_i\in(0,\pi)$. let
$S_i:=\sum_{j\neq i}\theta_j$. Then
$S_i\leq\theta-2\delta$ for every $i$, while
$\theta(\lambda):=\sum_i\theta_i<\theta$ by~\eqref{eq:phase-density}.

For any $1\le i\le n$. If $\theta_i\geq\delta$, then
$\lambda_i\leq\cot\delta$. If $\theta_i<\delta$, then
\[
 \theta-S=\theta-S_i-\theta_i\geq\delta,
\]
and hence, using~\eqref{eq:phase-density},
\[
 c_t\geq
 \frac{\sin\delta}{\sin\theta\,\sin\phi_i}.
\]
Since $c_t$ is uniformly bounded by Lemma~\ref{lem:ct-properties},
this gives a uniform positive lower bound for $\sin\theta_i$, and
then a uniform upper bound for $\lambda_i$. Thus we have ~\eqref{eq:Y-upper}.
\end{proof}

Then We can prove the local \(C^0\) estimate.
\begin{proposition}
\label{prop:local-C0-LYZ}
There exists a constant $C>0$, independent of $t$, such that
\begin{equation}\label{eq:local-barrier-estimate}
 u_t\geq\rho-C\qquad\text{on }M\setminus Z.
\end{equation}
Consequently, for every $K\Subset M\setminus Z$ there is a uniform 
$C_K>0$, such that
\begin{equation}\label{eq:local-C0-LYZ}
 -C_K\leq u_t\leq0\qquad\text{on }K.
\end{equation}
\end{proposition}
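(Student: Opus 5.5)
We prove \eqref{eq:local-barrier-estimate} by an ABP argument in the spirit of Sz\'ekelyhidi's $C^0$ estimate for $\mathcal C$-subsolutions, using the singular barrier $\rho$ in place of a smooth subsolution. Write $w:=u_t-\rho$ on $M\setminus Z$. Since $u_t$ is smooth and $\rho\to-\infty$ along $Z$, we have $w\to+\infty$ near $Z$. Hence $\min_{M\setminus Z}w$ is attained at some point $x_0\in M\setminus Z$, and it suffices to bound $m:=w(x_0)$ from below uniformly in $t$. Since $\sup_Mu_t=0$ and $\rho\le0$, the difficulty lies entirely in a lower bound. The second assertion \eqref{eq:local-C0-LYZ} then follows at once: on $K\Subset M\setminus Z$ the function $\rho$ is bounded below, so $\rho-C\ge-C_K$ there.

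\emph{Localization.} On $M\setminus Z$ we have $\chi+\ii\partial\bar\partial\rho=\widehat\chi$ by \eqref{eq:section7-barrier}. Therefore
\[
\eta_t=\widehat\chi+t\omega+\ii\partial\bar\partial w .
\]
Take a coordinate ball $B_1(x_0)$ with $\omega$ comparable to the Euclidean metric, and set $\tilde w:=w+\varepsilon|x-x_0|^2$ for the small $\varepsilon=\varepsilon_0/C$ coming from Lemma~\ref{lem:pointwise-LYZ}.

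The ball may meet $Z$. This causes no problem, since $\tilde w=+\infty$ on $Z$, so the lower contact set stays away from $Z$. One can also apply Lemma~\ref{lem:ABP-LYZ} on the open set $B_1(x_0)\setminus\{\tilde w>m+\varepsilon\}$. By construction $\tilde w\ge m+\varepsilon$ on $\partial B_1$ and $\tilde w(x_0)=m$.

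\emph{Pointwise bounds on the contact set.} On the contact set $\Gamma$ we have $D^2\tilde w\ge0$, so $\ii\partial\bar\partial w\ge-\varepsilon C\omega\ge-\varepsilon_0\omega$. Consequently $\eta_t\ge\widehat\chi+t\omega-\varepsilon_0\omega$ there. Lemma~\ref{lem:pointwise-LYZ} then gives $\eta_t\le R\omega$, i.e.\ $\ii\partial\bar\partial w\le C'\omega$ on $\Gamma$. This is the crucial use of the strict subsolution margin \eqref{eq:strict-phase-margin}, together with $c_t$ being uniformly bounded by Lemma~\ref{lem:ct-properties}. Since $\tilde w$ is convex at contact points, the real Hessian is controlled by the complex one, giving $\det D^2_{\mathbb R}\tilde w\le C''$ on $\Gamma$.

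ABP \eqref{eq:ABP-LYZ} then yields $c\varepsilon^{2n}\le C''|\Gamma|$. On $\Gamma$ we also have $\tilde w<m+\varepsilon/2$, so $w\le m+\varepsilon/2$ on a set of Lebesgue measure at least $c_1>0$, with $c_1$ independent of $t$.

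\emph{Closing the estimate (main obstacle).} To conclude we need a uniform $L^1$ bound on $w=u_t-\rho$. The potentials $u_t$ are $C\omega$-psh, since $\eta_t>\cot\theta\,\omega$ by \eqref{eq:eta-lower}, and satisfy $\sup u_t=0$. Compactness of normalized quasi-psh functions therefore gives $\int_M|u_t|\omega^n\le C$. The barrier $\rho$ has logarithmic singularities, so $\rho\in L^1$. Hence $\int_M|w|\le C$.

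Combining, $|m|c_1\le\int_{\{w\le m+\varepsilon/2\}}|w|\le C$ whenever $m\ll0$, so $m\ge-C$. The delicate point I expect is making sure the ABP localization is legitimate where the ball touches $Z$. I would handle it by noting that the region $\{w<m+\varepsilon\}$ is relatively compact in $M\setminus Z$; this holds because $w\to+\infty$ at $Z$, although not with uniform rate in $t$. That lack of uniformity is harmless, since ABP uses only the boundary inequality and not any quantitative distance from $Z$.
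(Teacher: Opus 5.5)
Your proposal is correct and follows essentially the same route as the paper. Both arguments set $v_t=u_t-\rho$, localize at its minimum with a quadratic perturbation, and use Lemma~\ref{lem:pointwise-LYZ} on the ABP contact set to bound $\det D^2_{\mathbb R}$, which gives a uniform lower bound on the volume of the contact set; both then close with the uniform $L^1$ bound on $u_t-\rho$. The only differences are cosmetic: the paper uses the connected component of $\{w_t<m_t+\varepsilon\}\cap(B_r\setminus Z)$ as the ABP domain and a Green function estimate for $\int_M|u_t|\,\omega^n$, whereas you use the sublevel set in the ball and quasi-psh compactness (your set $B_1(x_0)\setminus\{\tilde w>m+\varepsilon\}$ should read $B_1(x_0)\cap\{\tilde w<m+\varepsilon\}$ to be open).
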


\begin{proof}
Let
$
v_t=u_t-\rho$ on $M\setminus Z$.
By~\eqref{eq:section7-barrier},
\begin{equation}\label{eq:eta-vt}
\eta_t=\widehat\chi+t\omega+\ii\partial\bar\partial v_t
 \qquad\text{on }M\setminus Z,
\end{equation}
and $v_t\to+\infty$ along $Z$.

By~\eqref{eq:eta-lower},
\(
 \ii\partial\bar\partial u_t
 =\eta_t-\chi-t\omega\geq-C\omega.
\)
Since $\sup_Mu_t=0$, the Green function estimate gives
\(
 \int_M|u_t|\,\omega^n\leq C.
\)
As $\rho\in L^1(M)$, we have
\begin{equation}\label{eq:vt-L1}
 \int_M|v_t|\,\omega^n\leq C.
\end{equation}

Let
$
m_t=\inf_{M\setminus Z}v_t.
$
The minimum is attained at $p_t\in M\setminus Z$. Choose
holomorphic coordinates centered at $p_t$ on a ball $B_{2r}$, with
$r>0$ independent of $t$, and define
\[
w_t(z)=v_t(z)+\varepsilon r^{-2}|z|^2,
\]
where $\varepsilon>0$ is fixed and small. Then
\[
w_t(0)=m_t,\qquad
w_t\geq m_t+\varepsilon
\quad\text{on }\partial B_r.
\]
Let $\Omega_t$ be the component containing $0$ of
$
\{w_t<m_t+\varepsilon\}\cap(B_r\setminus Z).
$
Since $v_t\to+\infty$ along $Z$,
\(
w_t\geq m_t+\varepsilon\)
on \(\partial\Omega_t.
\)

Lemma~\ref{lem:ABP-LYZ} gives a set $\Gamma_t\subset\Omega_t$ such that
\begin{equation}\label{eq:ABP-contact-LYZ}
 D^2_{\mathbb R}w_t\geq0,\qquad
 w_t<m_t+\frac{\varepsilon}{2}
 \quad\text{on }\Gamma_t,
\end{equation}
and
\begin{equation}\label{eq:ABP-integral-LYZ}
 c\leq
 \int_{\Gamma_t}
 \det(D^2_{\mathbb R}w_t)\,dx.
\end{equation}
On $\Gamma_t$,
\(
\ii\partial\bar\partial v_t\geq-C\varepsilon\omega.
\)
Choose $\varepsilon$ so that $C\varepsilon\leq\varepsilon_0$.
Lemma~\ref{lem:pointwise-LYZ} and~\eqref{eq:eta-vt} give
\(
\eta_t\leq R\omega\) on \(\Gamma_t.
\)
Thus
\[
0\leq D^2_{\mathbb R}w_t\leq C
\qquad\text{on }\Gamma_t,
\]
and
\[
\det (D^2_{\mathbb R}w_t)\leq C.
\]
Together with~\eqref{eq:ABP-integral-LYZ},
\[
\operatorname{Vol}_\omega(\Gamma_t)\geq c>0.
\]
Since $v_t\leq w_t<m_t+\varepsilon/2$ on $\Gamma_t$, for
$m_t<-\varepsilon$, 
\[
C\geq\int_{\Gamma_t}|v_t|\,\omega^n
\geq
\left(-m_t-\frac{\varepsilon}{2}\right)
\operatorname{Vol}_\omega(\Gamma_t),
\]
while the first inequality follows from ~\eqref{eq:vt-L1}.
Thus $m_t\geq-C$, which gives
\[
u_t\geq\rho-C
\qquad\text{on }M\setminus Z.
\]
Since $\rho$ is bounded below on every compact subset of
$M\setminus Z$,~\eqref{eq:local-C0-LYZ} follows.
\end{proof}

\subsection{A uniform global \(C^0\) estimate}

We first record a quantitative property of the
supercritical cone.

\begin{lemma}\label{lem:quantitative-property}
Let $\chi\in\overline{\Gamma}_{\omega,\theta}$ and
$\widehat\chi\in\Gamma_{\omega,\theta}$ be smooth real $(1,1)$-forms
on $M$, where $0<\theta<\pi$. Set
$
B_s:=(1-s)\chi+s\widehat\chi,\qquad 0\le s\le1.
$
Then $B_s\in\Gamma_{\omega,\theta}$ for $s>0$, and there exists
a constant $\delta>0$ such that
\[
G_{n-1,\theta}(B_s,\omega)\ge\delta s\,\omega^{n-1},
\qquad 0\le s\le1.
\]
\end{lemma}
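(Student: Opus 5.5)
The plan has two steps. First I show that $B_s\in\Gamma_{\omega,\theta}$ for $s>0$ using concavity of the cone. Then I get the linear lower bound on $G_{n-1,\theta}(B_s,\omega)$ from concavity of a suitable function defining the cone, combined with the boundary inequality $G_{n-1,\theta}(\chi)\geq0$ from Lemma~\ref{lem:descending-positivity}.

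For membership, recall that the set $\{\lambda:\max_i\sum_{j\ne i}\arccot\lambda_j<\theta\}$ is convex; this is the $\Upsilon_1$-cone identified in the proof of Proposition~\ref{prop:twisted-existence}. Its matrix version $\Gamma_{\omega,\theta}$ is therefore a convex open set at each point. Here $\chi$ lies in the closure and $\widehat\chi$ in the interior. A convex combination with positive weight on an interior point is interior, so $B_s\in\Gamma_{\omega,\theta}$ for every $s\in(0,1]$.

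For the quantitative bound, fix a point $x$ and a unit vector $v$, and write $F_v(A):=G_{n-1,\theta}(A,\omega)\wedge \ii v\wedge\bar v/\omega^n$. Up to a positive dimensional constant, this is the $(n-1)$-dimensional LYZ operator on the hyperplane $v^\perp$ applied to the restriction of $A$. I would use the known fact that on the cone where all $(n-2)$-partial phases are $<\theta$, the function $G_{n-1}^{1/(n-1)}$ is concave; this holds because $G_{n-1}$ is a hyperbolic polynomial in direction $\omega$ (Gårding), or alternatively it can be read off from Lin's $\Upsilon$-stability. Restrictions of $B_s$ to $v^\perp$ stay in the $(n-1)$-dimensional cone by the interlacing argument used in Lemma~\ref{lem:restriction-regularisation}. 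Concavity then gives
\[
F_v(B_s)^{1/(n-1)}\ \ge\ (1-s)F_v(\chi)^{1/(n-1)}+sF_v(\widehat\chi)^{1/(n-1)}\ \ge\ s\,F_v(\widehat\chi)^{1/(n-1)},
\]
since $F_v(\chi)\ge0$ by Lemma~\ref{lem:descending-positivity}. By compactness of $M$ and of the unit sphere bundle, $\min_{x,v}F_v(\widehat\chi)=:\delta_1>0$, because $G_{n-1}(\widehat\chi)>0$ strictly. This yields $F_v(B_s)\ge \delta_1 s^{n-1}$. That is not linear in $s$, so one more step is needed.

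To upgrade to linear order, I would instead use concavity of $G_{n-1}$ itself along the segment, in the weaker form of a tangent-line inequality at the endpoint $s=0$. Set $\phi(s)=F_v(B_s)$, a polynomial of degree $\le n-1$ in $s$ with $\phi(0)\ge0$. Its derivative is $\phi'(0)=\sum_{i}\partial F_v(\chi)\cdot(\widehat\chi-\chi)$. On the hyperbolic cone, $\phi^{1/(n-1)}$ is concave and $\phi$ is increasing in the direction $\widehat\chi-\chi$ modulo the cone. The cleaner route is a case split by compactness. Where $\phi(0)\ge\delta_2$, continuity in $(x,v,s)$ gives $\phi(s)\ge\delta_2/2$ for small $s$. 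Near points where $\phi(0)=0$, the restricted $\chi$ lies on the boundary of the $(n-1)$-cone. There $\widehat\chi$ is a strictly interior point, so the derivative $\phi'(0)$, which is the pairing of the normal $\partial F_v(\chi)$ with $\widehat\chi-\chi$, is bounded below by a positive constant. This holds because the normal cone of a hyperbolic polynomial at a nonzero boundary point strictly separates the interior, and uniform positivity follows by compactness of the set of such $(x,v)$. Since $\phi''$ is bounded uniformly, this gives $\phi(s)\ge cs$ for $s\le s_0$. For $s\ge s_0$, the $s^{n-1}$ bound above gives $\phi(s)\ge\delta_1s_0^{n-1}\ge \delta_1 s_0^{n-1}s$.

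I expect the main obstacle to be the uniformity of $\phi'(0)>0$ near boundary points where $\phi(0)$ is small but nonzero, together with justifying concavity of $G_{n-1}$ on the restricted cone. I would handle both with a single compactness argument on the compact set $\{(x,v)\}$, using that $F_v$ is strictly increasing along the interior direction $\omega$ and that $\widehat\chi-\chi\ge$ (cone interior direction) pointwise in the hyperbolic sense.
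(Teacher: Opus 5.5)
There is a genuine gap at the decisive step, although it can be repaired. The step that carries all the weight is $\phi'(0)>0$ at every $(x,v)$ with $\phi(0)=0$. You justify it with a claim that is false in the generality you state.

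The normal cone of the convex set $\{q_H<\theta\}$ does strictly separate interior points; here $H=v^\perp$ and $q_H(A)=Q_{\omega|_H}(A|_H)$. But $\phi'(0)$ is the pairing of $\widehat\chi-\chi$ with the \emph{gradient} $DF_v(\chi)$. The gradient of a hyperbolic polynomial can vanish at nonzero boundary points of its cone. For example, take $\det$ on $3\times3$ Hermitian matrices and $A=e_1e_1^*$; then $\det((1-s)A+sI)=s^2$.

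So you must prove nondegeneracy: $DF_v(\chi)\neq0$ wherever $q_H(\chi)=\theta$. This does hold here, for the following reasons.
\begin{itemize}
\item $\frac{d}{dt}F_v(\chi+t\omega)|_{t=0}$ is a positive multiple of $G_{n-2}(\chi)\wedge\omega\wedge\ii v\wedge\bar v/\omega^n$, which is positive by Lemma~\ref{lem:descending-positivity}. Explicitly, at a point with $\sum_j\theta_j=\theta$ it is a positive multiple of $\sum_j\sin^2\theta_j/\prod_j\sin\theta_j$.
\item Since $F_v>0$ on the open convex set $\{q_H<\theta\}$ and $F_v(\chi)=0$, you get $DF_v(\chi)\cdot(\widehat\chi-\chi)\ge0$.
\item Strictness follows from $DF_v(\chi)\neq0$: perturb the interior point $\widehat\chi$ slightly against the gradient to contradict equality.
\end{itemize}
Your last sentence mentions monotonicity along $\omega$, but this link to $\phi'(0)$ is exactly what is missing.

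The uniformity you single out as the main obstacle is routine. $\phi'_{x,v}(0)$ is continuous and positive on the compact zero set, so it is bounded below on a neighbourhood of it. Off that neighbourhood, $\phi(0)$ is bounded below.

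Two smaller corrections:
\begin{itemize}
\item Gårding's theorem needs a homogeneous polynomial. You must homogenize (replace $\lambda_j+\ii$ by $\lambda_j+\ii\mu$) and then slice at $\mu=1$.
\item $F_v^{1/(n-1)}$ is concave on $\{q_H<\theta\}$, not on the cone where the $(n-2)$-partial phases are $<\theta$. On the latter cone $F_v$ can be negative.
\end{itemize}
In fact Gårding is unnecessary. For $s\ge s_0$, compactness together with $q_H(B_s)\le P_\omega(B_s)<\theta$ already gives a uniform positive lower bound.

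For comparison, the paper avoids both the $s^{n-1}$ loss and the boundary analysis by using a different concave function. The map $A\mapsto\cot q_H(A)$ is concave (Chen's Lemma~5.6(9) in dimension $n-1$), so $\cot P_\omega=\min_H\cot q_H$ is concave too. This gives the linear phase gap $\cot P_\omega(B_s)\ge\cot\theta+\kappa s$ at once. The identity
$\operatorname{Re}\prod_{j\neq i}(\lambda_j+\ii)-\cot\theta\operatorname{Im}\prod_{j\neq i}(\lambda_j+\ii)=R_i\sin\Theta_i(\cot\Theta_i-\cot\theta)$,
with $R_i\sin\Theta_i\ge b$, then converts this gap directly into $G_{n-1,\theta}(B_s,\omega)\ge b\kappa s\,\omega^{n-1}$. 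No case split and no transversality argument at the boundary are needed.
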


\begin{proof}
For a complex hyperplane $H\subset T_x^{1,0}M$, write
\(
q_H(A):=Q_{\omega|_H}(A|_H).
\)
Then $P_\omega(A)=\max_H q_H(A)$.
By \cite[Lemma~5.6(9)]{Chen}, with $f=0$ and applied
in dimension $n-1$, the function $A\mapsto\cot q_H(A)$ is
concave on the convex supercritical phase domain
$\{A:q_H(A)<\pi\}$.

Since $P_\omega(\chi)\le\theta$ and
$P_\omega(\widehat\chi)<\theta$, compactness of $M$ gives
a constant $\kappa>0$ such that
\[
\cot P_\omega(\chi)\ge\cot\theta,\qquad
\cot P_\omega(\widehat\chi)\ge\cot\theta+\kappa.
\]
Thus
\[
\begin{aligned}
\cot P_\omega(B_s)
&\ge(1-s)\cot P_\omega(\chi)
+s\cot P_\omega(\widehat\chi)\ge\cot\theta+\kappa s.
\end{aligned}
\]
As $\cot$ is strictly decreasing on $(0,\pi)$, we obtain
$P_\omega(B_s)\le\theta$, with strict inequality for $s>0$.

At a fixed point, choose an $\omega$-unitary frame in which
$B_s$ is diagonal, with eigenvalues $\lambda_1,\ldots,\lambda_n$.
Set
\[
\Theta_i:=\sum_{j\ne i}\arccot\lambda_j,\qquad
R_i:=\prod_{j\ne i}\sqrt{1+\lambda_j^2}.
\]
Since $\Theta_i\le P_\omega(B_s)$, the preceding inequality yields
\(
\cot \Theta_i-\cot\theta\ge\kappa s.
\)
The forms $B_s$, $0\le s\le1$, constitute a smooth compact
family, so their eigenvalues are uniformly bounded.
Hence there exists $\sigma>0$ such that
\(
0<\sigma\le \Theta_i\le\theta<\pi
\)
for all $x\in M$, $s\in[0,1]$, and $1\le i\le n$.
Since $R_i\ge1$, there is a constant $b>0$, independent
of $x,s$, such that $R_i\sin \Theta_i\ge b$. Therefore
\[
\begin{aligned}
&\operatorname{Re}\prod_{j\ne i}(\lambda_j+\sqrt{-1})
-\cot\theta\,
 \operatorname{Im}\prod_{j\ne i}(\lambda_j+\sqrt{-1})
=R_i\sin \Theta_i\,(\cot \Theta_i-\cot\theta)
\ge b\kappa s.
\end{aligned}
\]
Comparing the diagonal coefficients of
$G^\theta_{n-1}(B_s,\omega)$ and $\omega^{n-1}$ gives
\[
G^\theta_{n-1}(B_s,\omega)
\ge b\kappa s\,\omega^{n-1}.
\]
Taking $\delta=b\kappa$ completes the proof.
\end{proof}

Inspired by the trick of Jeffres ~\cite{Jeffres2000}, we use the bounded barrier $\psi\in C^0(M)\cap C^\infty(M\setminus Z) $ defined by
\[
 \psi=e^{a\rho}\quad\text{on }M\setminus Z,\qquad
 \psi|_Z=0,
\]
where $a$ is small positive constant and then 
$0\leq\psi\leq1$.
Since $\rho\leq0$ and $\rho\to-\infty$ along $Z$, the extension
$\psi|_Z=0$ is continuous and $0\leq\psi\leq1$.
\begin{lemma}[Bounded  barrier]
\label{lem:bounded-LYZ-barrier}
For all small $a>0$, 
\begin{equation}\label{eq:tilde-chi-Gamma}
\widetilde\chi:=\chi+\ii\partial\bar\partial\psi
 \in\Gamma_{\omega,\theta}
 \qquad\text{on }M\setminus Z,
\end{equation}
and the function 
\begin{equation}\label{eq:Gn-blowup-LYZ}
 \frac{G_{n,\theta}(\widetilde\chi,\omega)}{\omega^n}(z)
 \longrightarrow+\infty
 \qquad\text{as }z\to Z.
\end{equation}
Moreover, for every smooth function $u$ on $M$, the function
$u-\psi$ cannot attain a local minimum on $Z$.
\end{lemma}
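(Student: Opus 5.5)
The plan is to compute $\ii\partial\bar\partial\psi$ explicitly on $M\setminus Z$, compare $\widetilde\chi$ with the segment $B_s=(1-s)\chi+s\widehat\chi$ of Lemma~\ref{lem:quantitative-property}, and then use the rank-one gradient term to produce the blow-up. Throughout, $\widehat\chi$ and $\rho$ are those of Theorem~\ref{thm:log-subsolution}, normalized so that $\sup_M\rho=0$.

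\textbf{Step 1: cone membership.} On $M\setminus Z$ we have $\chi+\ii\partial\bar\partial\rho=\widehat\chi$ by \eqref{eq:section7-barrier}. Hence
\[
 \ii\partial\bar\partial\psi=a\psi\,\ii\partial\bar\partial\rho+a^2\psi\,\ii\partial\rho\wedge\bar\partial\rho
 =a\psi(\widehat\chi-\chi)+a^2\psi\,\ii\partial\rho\wedge\bar\partial\rho .
\]
Setting $s:=a\psi\in(0,a]$ on $M\setminus Z$, this gives
\[
 \widetilde\chi=B_s+P,\qquad P:=a^2\psi\,\ii\partial\rho\wedge\bar\partial\rho\geq0 .
\]
Since $s>0$ and $s\le a\le 1$, Lemma~\ref{lem:quantitative-property} gives $B_s\in\Gamma_{\omega,\theta}$. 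Because $P_\omega$ is decreasing in the matrix order, adding the semipositive form $P$ keeps us in the cone, so $\widetilde\chi\in\Gamma_{\omega,\theta}$ on $M\setminus Z$. This proves \eqref{eq:tilde-chi-Gamma}.

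\textbf{Step 2: blow-up of $G_n$.} The form $P$ has rank one, so every term of the multilinear expansion of $G_{n,\theta}(B_s+P,\omega)$ containing $P^{k}$ with $k\ge 2$ vanishes. Therefore
\[
 G_{n,\theta}(\widetilde\chi,\omega)=G_{n,\theta}(B_s,\omega)+n\,G_{n-1,\theta}(B_s,\omega)\wedge P .
\]
The first term is uniformly bounded, since $\{B_s\}_{0\le s\le 1}$ is a compact smooth family. By Lemma~\ref{lem:quantitative-property}, the second term is bounded below by
\[
 n\,\delta s\,\omega^{n-1}\wedge P=\delta\,a^3\psi^2\,|\partial\rho|^2_\omega\,\omega^n .
\]
So it suffices to show that $\psi^2|\partial\rho|^2_\omega\to\infty$ as $z\to Z$.

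Locally near a point of $Z$ we can write $\rho=(2\pi q)^{-1}\log|F|^2+\text{smooth}$, where $F=\prod_i f_i^{qb_i}$ is holomorphic with zero set $Z$. This gives $\psi\asymp|F|^{2\gamma}$ with $\gamma=a/(2\pi q)$, and $|\partial\rho|\ge c\,|\nabla F|/|F|-C$. Here I invoke the Łojasiewicz gradient inequality $|\nabla F|\ge c|F|^{\vartheta}$ with $\vartheta<1$, valid near $Z$ after covering $Z$ by finitely many charts. It yields
\[
 \psi^2|\partial\rho|^2\ge c\,|F|^{4\gamma+2\vartheta-2}-C .
\]
Choosing $a$ small enough that $4\gamma<2-2\vartheta$, uniformly over the finitely many charts, makes the right-hand side tend to $+\infty$. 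This proves \eqref{eq:Gn-blowup-LYZ}.

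I expect this step to be the main obstacle. A naive lower bound for $|\partial\rho|$ can fail near the singular points and crossings of $Z$, because the terms $b_i\,df_i/f_i$ may cancel. The Łojasiewicz inequality is what I would use to handle this uniformly; if it proves awkward, I would instead pass to a log resolution of $(M,Z)$, where the bound is elementary.

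\textbf{Step 3: no minimum on $Z$.} Fix $z_0\in Z$ and a complex line through $z_0$, parametrized by $w$, along which $F$ vanishes to some finite order $k$ at $w=0$. Along this line $\psi\le C|w|^{2\gamma k}$, while $\psi(z_0)=0$. Shrinking $a$ once more so that $2\gamma k<1$, and using that $u$ is smooth, we get
\[
 (u-\psi)(w)\ge u(z_0)-C|w|-C|w|^{2\gamma k}
\]
for the trivial direction, but more importantly $\psi(w)\ge c|w|^{2\gamma k}$ at generic points of the line, so that
\[
 (u-\psi)(w)\le u(z_0)+C|w|-c|w|^{2\gamma k}<(u-\psi)(z_0)
\]
for small $w\ne0$. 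Hence $z_0$ is not a local minimum of $u-\psi$. The order $k$ can be bounded uniformly on the compact set $Z$, so a single small $a$ works for all three assertions simultaneously.
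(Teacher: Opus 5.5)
Your proposal is correct. Step~1, and the rank-one reduction
\[
G_{n,\theta}(\widetilde\chi,\omega)=G_{n,\theta}(B_s,\omega)+n\,G_{n-1,\theta}(B_s,\omega)\wedge P\geq -C\omega^n+\delta a^3\psi^2|\partial\rho|^2_\omega\,\omega^n,
\]
are exactly what the paper does. You diverge in how the singularity of $\rho$ along $Z$ is controlled.

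\emph{The paper's route.} It pulls everything back to a log resolution $\pi\colon\widetilde M\to M$ of $Z$. In SNC coordinates, $\pi^*\rho=\sum_j b_j\log|w_j|^2+H$, and the terms $b_j\,dw_j/w_j$ point in independent directions, so they cannot cancel. This gives $|\partial(\pi^*\rho)|^2\geq c\,d(w)^{-2}$ and $e^{2a\pi^*\rho}\geq c\,d(w)^{4aB}$. For the minimum claim, the disc $t\mapsto(t,\dots,t,0,\dots,0)$ gives $\psi\geq c|t|^{2aB}$, to be played against the bound $C|t|$ coming from smoothness of $u$. The whole argument needs only the single explicit condition $2aB<1$.

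\emph{Your route.} You stay on $M$. The {\L}ojasiewicz gradient inequality for the local holomorphic function $F=\prod_i f_i^{qb_i}$ handles the possible cancellation at singular points and non-transverse crossings of $Z$. Applied, for instance, to the real-analytic function $|F|^2$, it gives $|\nabla F|\geq c|F|^{\vartheta}$ with $\vartheta<1$. For the minimum claim you use a line through $z_0$ on which $F(z_0+wv)=w^kg(w)$ with $g(0)\neq0$.

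\emph{Comparison.} Your approach avoids resolution of singularities, but trades it for a deeper inequality with a non-explicit exponent. The paper's approach is elementary once the resolution is available, and it makes the threshold on $a$ explicit.

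\emph{Points to state rather than assert.}
\begin{enumerate}
\item The uniform bound on $k$: choose $v$ generic so that $k=\operatorname{mult}_{z_0}F=\sum_i qb_i\operatorname{mult}_{z_0}D_i$. This is bounded on $M$ by upper semicontinuity of multiplicity.
\item The uniform {\L}ojasiewicz exponent: cover $Z$ by finitely many charts on which $|F|\leq1$, and take the largest exponent.
\item In Step~3, $|F(z_0+wv)|\asymp|w|^k$, so the bound $\psi\geq c|w|^{2\gamma k}$ holds for all small $w\neq0$, not merely at \emph{generic points} of the line. Your first display in Step~3 is superfluous.
\end{enumerate}
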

\begin{proof} Since,
on $M\setminus Z$, $\widehat\chi=\chi+\sqrt{-1}\partial\bar\partial \rho$, we have
\begin{equation}\label{eq:tildechi-decomp}
 \widetilde\chi
 =(1-a\psi)\chi+a\psi\widehat\chi
+a^2\psi\,\ii\partial\rho\wedge\bar\partial\rho.
\end{equation}
Then by Lemma \ref{lem:quantitative-property}, 
\eqref{eq:tilde-chi-Gamma} follows. By Lemma \ref{lem:quantitative-property}, we also have
\begin{align*}
G_{n,\theta}(\widetilde\chi,\omega)
 =&G_{n,\theta}(B_\lambda,\omega)
   +nR\wedge G_{n-1,\theta}(B_\lambda,\omega)\\
 \geq&-C\omega^n+\delta a^3 e^{2a\rho}
|\partial\rho|_\omega^2\omega^n.
\end{align*}
We next show
\begin{equation}\label{eq:rho-gradient-blowup}
 e^{2a\rho}|\partial\rho|_\omega^2\longrightarrow+\infty
 \qquad\text{along }Z.
\end{equation}

Let $\pi:\widetilde M\to M$ be a log resolution of $Z$ which is an
isomorphism over $M\setminus Z$, and fix a K\"ahler metric
$\widetilde\omega$ on $\widetilde M$. Around a point of
$\pi^{-1}(Z)$ choose coordinates
$w=(w_1,\ldots,w_n)$ such that
$\pi^{-1}(Z)=\{w_1\cdots w_k=0\}$. Since $\rho$ has logarithmic
singularities with strictly positive divisorial coefficients,
\begin{equation}\label{eq:rho-resolution-LYZ}
 \pi^*\rho=\sum_{j=1}^k b_j\log|w_j|^2+H,\qquad b_j>0,
\end{equation}
where $H$ is smooth. Since $\pi^*\omega\leq C\widetilde\omega$,
\[
 |\partial\rho|_\omega^2\circ\pi
 \geq C^{-1}|\partial(\pi^*\rho)|_{\widetilde\omega}^2.
\]
Set $d(w):=\min_{1\leq j\leq k}|w_j|$. After shrinking the chart,
\[
 |\partial(\pi^*\rho)|_{\widetilde\omega}^2\geq c\,d(w)^{-2},
 \qquad
 e^{2a\pi^*\rho}\geq c\,d(w)^{4aB},
 \quad B:=\sum_{j=1}^k b_j.
\]
Thus
\[
 \bigl(e^{2a\rho}|\partial\rho|_\omega^2\bigr)\circ\pi
 \geq c\,d(w)^{4aB-2}.
\]
Using finitely many such charts and choosing $a>0$ so small that
$2aB<1$ on each of charts proves~\eqref{eq:rho-gradient-blowup}, and
hence~\eqref{eq:Gn-blowup-LYZ}.

It remains to prove that for every smooth function $u$ on $M$, the function
$u-\psi$ cannot attain a local minimum on $Z$. Suppose that $u-\psi$ has a
local minimum at $p\in Z$, choose $\widetilde p\in\pi^{-1}(p)$ and
an SNC chart $U$ near $\widetilde p$ as above. Along the holomorphic disc
\[
 \gamma:\Delta\rightarrow U, \quad\gamma(t)=(t,\ldots,t,0,\ldots,0),
\]
with the first $k$ coordinates equal to $t$, we have
$\pi^*\rho(\gamma(t))=B\log|t|^2+O(1)$, and thus
$\psi(\pi(\gamma(t)))\geq c|t|^{2aB}$. Then
\[
 u(\pi(\gamma(t)))-u(p)\geq c|t|^{2aB},
\]
whereas smoothness of $u$ gives an upper bound
$C|t|$. Since $2aB<1$, this is impossible as $t\to0$. The lemma is
proved.
\end{proof}

Then we get the uniform $L^{\infty}$ estimate of $u_t$.
\begin{proposition}
\label{prop:global-C0-LYZ}
There exists $C>0$, independent of $0<t\leq t_0$, such that
\begin{equation}\label{eq:global-C0-LYZ}
 -C\leq u_t\leq0\qquad\text{on }M.
\end{equation}
\end{proposition}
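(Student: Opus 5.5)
The upper bound $u_t\le 0$ is the normalization $\sup_M u_t=0$, so only the lower bound needs proof. I will use a maximum principle with the bounded barrier $\psi$ of Lemma~\ref{lem:bounded-LYZ-barrier}: compare $u_t$ with $\psi-C$, where $C$ does not depend on $t$. Since $0\le\psi\le1$, the bound $u_t\ge\psi-C$ gives $u_t\ge -C$ on $M$.

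Consider $w_t:=u_t-\psi$. It is continuous on $M$, since $\psi\in C^0(M)$, so it attains its minimum at some point $p_t$. By the last assertion of Lemma~\ref{lem:bounded-LYZ-barrier}, applied to the smooth function $u_t$, the point $p_t$ cannot lie on $Z$. Hence $p_t\in M\setminus Z$, where $\psi$ is smooth. At $p_t$ we have $\ii\partial\bar\partial u_t\ge\ii\partial\bar\partial\psi$, so
\[
\eta_t=\chi+t\omega+\ii\partial\bar\partial u_t\ \ge\ \widetilde\chi+t\omega \qquad\text{at }p_t .
\]
By \eqref{eq:tilde-chi-Gamma}, $\widetilde\chi+t\omega\in\Gamma_{\omega,\theta}$. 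The function $G_{n,\theta}(\cdot,\omega)/\omega^n$ is monotone increasing on $\Gamma_{\omega,\theta}$ in the matrix order. Indeed, its derivative in a direction $\xi\ge0$ is $nG_{n-1,\theta}\wedge\xi$, which is nonnegative by Lemma~\ref{lem:descending-positivity}, and the segment joining the two forms stays in the cone because $P_\omega$ is decreasing. Therefore
\[
c_t=\frac{G_{n,\theta}(\eta_t,\omega)}{\omega^n}(p_t)\ \ge\ \frac{G_{n,\theta}(\widetilde\chi+t\omega,\omega)}{\omega^n}(p_t)\ \ge\ \frac{G_{n,\theta}(\widetilde\chi,\omega)}{\omega^n}(p_t).
\]
By \eqref{eq:Gn-blowup-LYZ}, the right-hand side tends to $+\infty$ near $Z$. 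By Lemma~\ref{lem:ct-properties}, $c_t\le Ct_0$. Together these confine $p_t$ to a compact set $K\Subset M\setminus Z$ that is independent of $t$.

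The local estimate Proposition~\ref{prop:local-C0-LYZ} gives $u_t\ge -C_K$ on $K$. Then for every $x\in M$,
\[
u_t(x)-\psi(x)\ \ge\ u_t(p_t)-\psi(p_t)\ \ge\ -C_K-1,
\]
and hence $u_t\ge -C_K-1$ on $M$.

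The main obstacle is justifying the monotonicity comparison at $p_t$. Both $\eta_t$ and $\widetilde\chi+t\omega$ lie in $\Gamma_{\omega,\theta}$, so the interpolating path stays in the cone and the derivative computation above applies. One should also double-check that $\widetilde\chi$ depends only on the fixed small $a$ and not on $t$; this is what makes $K$ uniform in $t$. If a strict inequality is needed to avoid degenerate minima, replace $\psi$ by $(1-\epsilon)\psi$, or argue directly with the weak inequality, which suffices as shown.
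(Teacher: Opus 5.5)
Your proposal is correct and follows essentially the same route as the paper. The paper also minimizes $u_t-\psi$ and excludes $Z$ by Lemma~\ref{lem:bounded-LYZ-barrier}. It then uses monotonicity of $G_{n,\theta}$ along $\widetilde\chi+sH_t$ with $H_t=t\omega+\ii\partial\bar\partial(u_t-\psi)\ge0$ to confine the minimum point to a fixed compact $K\Subset M\setminus Z$, and finishes with Proposition~\ref{prop:local-C0-LYZ} and $0\le\psi\le1$.
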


\begin{proof}
Let $\psi$ be given by Lemma~\ref{lem:bounded-LYZ-barrier}. By
Lemma~\ref{lem:ct-properties},
$
 \sup\limits_{0<t\leq t_0}c_t\le C
$.
By ~\eqref{eq:Gn-blowup-LYZ}, there exists a   open set $U\supset Z$ such that
\begin{equation}\label{eq:barrier-large-Gn}
G_{n,\theta}(\widetilde\chi,\omega)>2C\omega^n
 \qquad\text{on }U\setminus Z.
\end{equation}
Set $K:=M\setminus U\subset \subset M\setminus Z$.
Assume  $u_t-\psi$ attains its minimum at 
$x_t\in M$. By Lemma~\ref{lem:bounded-LYZ-barrier}, $x_t\notin Z$.
At $x_t$,
\[
 H_t:=t\omega+\ii\partial\bar\partial(u_t-\psi)\geq0,
\]
and $\eta_t=\widetilde\chi+H_t$.
For $0\leq s\leq1$, we have $\widetilde\chi+sH_t\in 
\Gamma_{\omega,\theta}$ by~\eqref{eq:tilde-chi-Gamma} and then
\[
 \frac{d}{ds}G_{n,\theta}(\widetilde\chi+sH_t,\omega)
 =nG_{n-1}^\theta(\widetilde\chi+sH_t,\omega)\wedge H_t\geq0.
\]
Hence
\begin{equation}\label{eq:Gn-monotone}
 G_{n,\theta}(\eta_t,\omega)
 \geq G_{n,\theta}(\widetilde\chi,\omega)
 \qquad\text{at }x_t.
\end{equation}
Then, if $x_t\in U\setminus Z$, we have
\[
 c_t\omega^n=G_{n,\theta}(\eta_t,\omega)
 \geq G_{n,\theta}(\widetilde\chi,\omega)
 >2C\omega^n,
\]
which is a contradiction. Thus $x_t\in K$, for every $0<t\leq t_0$.

By Proposition~\ref{prop:local-C0-LYZ},
$u_t(x_t)\geq-C_K$. Since $0\leq\psi\leq1$,
$u_t(x_t)-\psi(x_t)\geq-C_K-1$. Then
$u_t-\psi\geq-C_K-1$ on $M$, and thus $u_t\geq-C_K-1$.
Together with $\sup_Mu_t=0$, this proves~\eqref{eq:global-C0-LYZ}.
\end{proof}

\subsection{Local higher order estimates}
\label{subsec:weighted-C2}
On $M\setminus Z$, recall that
$\eta_t=\beta_t+\ii\partial\bar\partial v_t$, where
$\beta_t=\widehat\chi+t\omega$ and $v_t=u_t-\rho$.
We have $u_t\leq0$, $v_t\geq-C$, and $v_t\to+\infty$ along $Z$.

Fix $\theta<\Theta_*<\Theta<\pi$ with
$Q_\omega(\widehat\chi)<\Theta_*$.
Since $\widehat\chi$ is a smooth strict subsolution, we can choose
a fixed $\delta>0$ such that
\begin{equation}\label{eq:strict-subsolution}
 P_\omega(\beta_t)\leq\theta-\delta,
 \qquad Q_\omega(\beta_t)\leq\Theta_*,
\end{equation}
and $\beta_t-2\delta\omega\in\Gamma_{\omega,\theta,\Theta}$
for all sufficiently small $t>0$.
All constants below are independent of $t$.

Set
\begin{equation}\label{eq:Ft}
 F_t(\eta)=
 \frac{\Rea(\eta+\ii\omega)^n-c_t\omega^n}
      {\Ima(\eta+\ii\omega)^n}-\cot\theta.
\end{equation}
Then $F_t(\eta_t)=0$.
By \cite[Lemma~5.6(3), (9), (10)]{Chen}, $F_t$ is elliptic
and concave on the convex set $\Gamma_{\omega,\theta,\Theta}$.
Write $w_{i\bar j}=(\eta_t)_{i\bar j}$,
$F_t^{i\bar j}=\partial F_t/\partial w_{i\bar j}$, and
$\mathcal L_t=F_t^{i\bar j}\nabla_i\nabla_{\bar j}$.
All derivatives are covariant derivatives with respect to $\omega$.

Let $\lambda_1\geq\cdots\geq\lambda_n$ be the
eigenvalues of $\eta_t$ with respect to $\omega$, and write $q=Q_\omega(\eta_t)$.
The identity
$c_t=\sin(\theta-q)\prod_i\sqrt{1+\lambda_i^2}/\sin\theta$
and $c_t\to0$ give
\begin{equation}\label{eq:small-twist}
 \theta/2\leq q<\theta,\qquad
 (\theta-q)(1+\max_i|\lambda_i|)\leq Cc_t.
\end{equation}
In an $\omega$-unitary frame where  $\eta_t$ is diagonal, direct
differentiation gives $F_t^{i\bar j}=F_i\delta_{ij}$, where
\begin{equation}\label{eq:Fi}
 F_i=
 \frac{\cos(\theta-q)+\lambda_i\sin(\theta-q)}
 {\sin\theta\sin q\,(1+\lambda_i^2)},
 \qquad
 \frac{C^{-1}}{1+\lambda_i^2}\leq F_i
 \leq\frac{C}{1+\lambda_i^2}.
\end{equation}

We apply the subsolution argument of  Sz\'ekelyhidi~\cite[Proposition~6]{Sze2018}
to the  subsolution $\beta_t$.

\begin{lemma}\label{lem:separation}
There are constants $C_0,c_0>0$, independent of $t$, such that
$-\mathcal L_t v_t\geq c_0$ at every point of $M\setminus Z$
where $\lambda_1(\eta_t)\geq C_0$.
\end{lemma}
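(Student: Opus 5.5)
The plan is to follow Székelyhidi's argument \cite[Proposition~6]{Sze2018}, with the subsolution $\beta_t$ and a margin that is uniform in $t$. On $M\setminus Z$ we have $\eta_t=\beta_t+\ii\partial\bar\partial v_t$, so
\[
 -\mathcal L_tv_t=F_t^{i\bar j}\bigl((\beta_t)_{i\bar j}-(\eta_t)_{i\bar j}\bigr).
\]
By concavity of $F_t$ on the convex set $\Gamma_{\omega,\theta,\Theta}$ (from \cite[Lemma~5.6]{Chen}), for every $A$ in this set
\[
 F_t^{i\bar j}(A_{i\bar j}-(\eta_t)_{i\bar j})\ge F_t(A)-F_t(\eta_t)=F_t(A).
\]
We cannot apply this with $A=\beta_t$ directly, because $F_t(\beta_t)$ need not be positive. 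Instead we use the margin from \eqref{eq:strict-subsolution}: $\beta_t-2\delta\omega\in\Gamma_{\omega,\theta,\Theta}$ and $P_\omega(\beta_t)\le\theta-\delta$. Write
\[
 -\mathcal L_tv_t=\delta\sum_iF_i+F_t^{i\bar j}\bigl((\beta_t-\delta\omega)_{i\bar j}-(\eta_t)_{i\bar j}\bigr).
\]
The aim is to show the second term is $\ge0$ whenever $\lambda_1\ge C_0$, and that $\delta\sum_iF_i\ge c_0$.

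\emph{Step 1 (lower bound for $\sum_i F_i$).} By \eqref{eq:small-twist} we have $q\ge\theta/2$, so some $\theta_i$ is bounded below and the corresponding eigenvalue $\lambda_i$ is bounded above. By \eqref{eq:Fi} this gives $F_i\ge c$, hence $\sum_iF_i\ge c$ uniformly in $t$.

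\emph{Step 2 (the key step, by contradiction and compactness).} Suppose the second term is $<0$ at a point with $\lambda_1\ge C_0$. Set $\mu=\beta_t-\delta\omega$. The set $\{\lambda': F_t(\lambda')\ge0\}$ is convex, and $F_t(\lambda')=0$ is its boundary, so the tangent-plane characterization applies. If $F_t^{i\bar j}(\mu-\eta_t)_{i\bar j}<0$, Székelyhidi's argument shows that $\mu$ lies in a region where the level set is asymptotically a translate of the cone: the shifted set $\mu+\Gamma_n$ meets $\{F_t\ge0\}$ only in a bounded set, and the bound is controlled uniformly. Concretely, we must show that $(\mu+\Gamma_n)\cap\{F_t\ge0\}$ is contained in a ball of radius $C_0$ for all small $t$.

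To get this, note that $\lambda'\ge\mu$ with $\lambda_1'\to\infty$ means the remaining $n-1$ angles are bounded by those of $\mu$ in $n-1$ directions. These sum to at most $P_\omega(\mu)$, which can be kept below $\theta-\delta/2$ after shrinking constants. Meanwhile the phase $q(\lambda')$ approaches that partial sum, so
\[
 F_t(\lambda')\approx\frac{\sin(\theta-q)\prod_j\sqrt{1+\lambda_j'^2}-c_t}{\dots}>0
\]
for $\lambda'_1$ large. Using \eqref{eq:small-twist} together with $c_t\le Ct$, the quantity $(\theta-q)\lambda_1$ stays bounded for the solution $\eta_t$. For $\lambda'\ge\mu$ with $\lambda'_1$ large, $\theta-q(\lambda')\ge\delta/2$ instead, so the product grows like $\lambda_1'$ and dominates $c_t$. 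This shows that $\eta_t$ cannot lie in the bad region once $\lambda_1\ge C_0$.

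The main obstacle is making this compactness uniform in $t$, and handling the fact that $\eta_t$ itself only has $Q<\theta$ with margin $\theta-q\to0$. I would first try to work with the $t$-independent limit function $F_0=\Rea/\Ima-\cot\theta$, that is, the cone $\{Q<\theta\}$. I would check that the error $c_t/\Ima(\eta_t+\ii\omega)^n$ in $F_t$ is $O(t)$, using $\Ima\ge\kappa>0$ on $\Gamma_{\omega,\theta}$ as in \eqref{eq:imaginary-cofactor-lower}. This error can then be absorbed into the margin $\delta$.

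With Steps 1 and 2 in place, at every point of $M\setminus Z$ where $\lambda_1(\eta_t)\ge C_0$ we obtain $-\mathcal L_tv_t\ge\delta c=:c_0$, with $C_0$ and $c_0$ independent of $t$.
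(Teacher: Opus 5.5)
Your decomposition and Step~1 are fine and agree with the paper: $\theta/2\le q<\theta$ bounds $\lambda_n$ on both sides, so $\sum_iF_i\ge c$ by \eqref{eq:Fi}.

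\textbf{The gap.} The gap is in Step~2, at the decisive point. Your phase comparison ($\theta-q\to0$ at $\eta_t$, versus $\theta-q(\lambda')\ge\delta/2$ for $\lambda'\ge\mu$ far out) only shows that $\lambda(\eta_t)\notin\mu+\Gamma_n$. That is, $\lambda_i<\mu_i$ for \emph{some} $i$. This does not control the sign of $\sum_iF_i(\mu_i-\lambda_i)$, whose terms have mixed signs: $\mu_1-\lambda_1$ is very negative, while other $\lambda_j$ may exceed $\mu_j$.

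Negativity of that sum also says nothing about where $\mu$ lies. What Sz\'ekelyhidi's argument extracts from it is a statement about the normal at $\lambda$, namely $F_i\ge\kappa\sum_jF_j$ for \emph{every} $i$. The fact that rules this out is the decay $F_1\le C/(1+\lambda_1^2)$ from \eqref{eq:Fi}, and it never enters your Step~2. That is the missing idea.

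\textbf{The repair.} Use your own first display, but apply it at
\[
A=\beta_t-\delta\omega+R'\,\ii\,e^1\wedge\bar e^1
\]
rather than at $\beta_t$. Here $e^1$ is dual to the $\lambda_1$-eigenvector of $\eta_t$. Since $A\ge\beta_t-2\delta\omega$, we have $A\in\Gamma_{\omega,\theta,\Theta}$, and concavity gives
\[
-\mathcal L_tv_t=\delta\sum_iF_i-R'F_1+F_t^{i\bar j}(A-\eta_t)_{i\bar j}\ge\delta\sum_iF_i-R'F_1+F_t(A).
\]
As $R'\to\infty$, the remaining $n-1$ eigenvalues of $A$ converge uniformly to those of the compression of $\beta_t-\delta\omega$ to $e_1^{\perp}$. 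Their phase sum is at most $P_\omega(\beta_t-\delta\omega)\le\theta-\epsilon$, with $\epsilon>0$ independent of $t$. Your phase computation therefore gives $F_t(A)>0$ for a fixed $R'$, independent of $t$ and of the point. Then $\sum_iF_i\ge c$ and $F_1\le C/(1+\lambda_1^2)$ give $-\mathcal L_tv_t\ge\delta c/2$ once $\lambda_1\ge C_0$.

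Note this shows the second term of your split is $\ge-R'F_1$, not $\ge0$; that suffices. This is the paper's argument unpacked. The paper takes uniform boundedness of the cone/level-set intersection from the proof of Chen's Lemma~5.6(6), and Sz\'ekelyhidi's dichotomy with $\kappa$ independent of $t$. It then excludes the balanced alternative because $F_1\le C/(1+C_0^2)<\kappa c\le\kappa\sum_jF_j$.

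\textbf{Smaller points.}
\begin{enumerate}
\item The bounded set should be $(\mu+\Gamma_n)\cap\{F_t=0\}$; the set $(\mu+\Gamma_n)\cap\{F_t\ge0\}$ is unbounded.
\item Your vector notation $\lambda'\ge\mu$ glosses over the fact that $\beta_t$ is not diagonal in the $\eta_t$-frame. Working with the matrix $A$ above handles this automatically.
\item \eqref{eq:imaginary-cofactor-lower} bounds the $(n-1)$-fold cofactors $T_i$, not $\Ima(\eta+\ii\omega)^n$. The latter is bounded below at $\eta_t$ only because $\theta/2\le q<\theta$. So the detour through $F_0$ is not supported as stated, and it is also unnecessary.
\end{enumerate}
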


\begin{proof}
Since $\beta_t-2\delta\omega\in\Gamma_{\omega,\theta,\Theta}$,
the proof of \cite[Lemma~5.6(6)]{Chen} gives a fixed $R>0$ such that
\[
 \bigl(\lambda(\beta_t)-2\delta\mathbf 1+\mathbb R_{>0}^n\bigr)
 \cap\{F_t=0\}\subset B_R(0).
\]
Here $\mathbf 1=(1,\ldots,1)$, and the level set is taken in
$\Gamma_{\theta,\Theta}$.

The proof of \cite[Proposition~6]{Sze2018} then gives
$\kappa>0$ such that, whenever $\lambda_1>R$, either
\[
 -\mathcal L_t v_t\geq\kappa\sum_iF_i,
 \qquad\text{or}\qquad
 F_i\geq\kappa\sum_jF_j\quad\text{for every }i.
\]
The constant $\kappa$ is independent of $t$.
Indeed, the sets above lie in a fixed ball.
On these sets, $F_t\to F_0$ smoothly, and \eqref{eq:Fi}
gives a fixed positive lower bound for each $F_i$.

By \eqref{eq:small-twist},
$\cot\theta<\lambda_n\leq\cot(\theta/(2n))$.
Thus \eqref{eq:Fi} gives fixed constants $c,C>0$ such that
\[
 \sum_iF_i\geq c,
 \qquad F_1\leq\frac{C}{1+\lambda_1^2}.
\]
Set $c_0=\kappa c$.
Choose $C_0>R$ such that $C/(1+C_0^2)<c_0$.
If $\lambda_1\geq C_0$, the second alternative would give
$F_1\geq\kappa\sum_iF_i\geq c_0$, whereas $F_1<c_0$.
Hence the first alternative holds, and
$-\mathcal L_t v_t\geq\kappa\sum_iF_i\geq c_0$.
\end{proof}

\begin{proposition}\label{prop:C2}
For every $K\subset\subset M\setminus Z$, there is a constant $C_K$,
independent of $t$, such that $\Delta_\omega u_t\leq C_K$ on $K$.
\end{proposition}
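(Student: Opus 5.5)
We prove a localized second-order estimate of Hou--Ma--Wu / Sz\'ekelyhidi type on $M\setminus Z$. The weight $e^{-Av_t}$, with $v_t=u_t-\rho$, plays the role of the cutoff. Since $v_t\to+\infty$ along $Z$, any test function multiplied by a positive power of $e^{-v_t}$ vanishes on $Z$, so its maximum is attained in $M\setminus Z$. Moreover $v_t\ge -C$ by Proposition~\ref{prop:local-C0-LYZ}, and $u_t$ is uniformly bounded by Proposition~\ref{prop:global-C0-LYZ}. Since $\Delta_\omega u_t\ge -C$ by \eqref{eq:eta-lower}, it suffices to bound $\lambda_1=\lambda_1(\eta_t)$ from above on compact subsets.

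\textbf{Test function.} We consider
\[
G=\log\lambda_1+\varphi(|\nabla u_t|^2)+\phi(v_t),
\]
where $\lambda_1$ is handled through the usual perturbation of $\eta_t$ at the maximum point, and $\phi(s)=-As$ with $A$ large. Because $-\mathcal L_t\rho$ is controlled on $M\setminus Z$, and $\rho$ is smooth there with $\chi+\ii\partial\bar\partial\rho=\widehat\chi$, it is cleaner to differentiate $v_t$ directly: $\eta_t=\beta_t+\ii\partial\bar\partial v_t$ with $\beta_t$ smooth and uniformly bounded. However, $|\nabla u_t|$ is not yet bounded, and $|\nabla v_t|$ blows up near $Z$. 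So first I will try the simpler test function
\[
G=\log\lambda_1-Av_t,
\]
avoiding the gradient term. I would rely on the fact that for the LYZ operator the third-order terms are controlled by the concavity of $F_t$ together with the structure $F_i\approx(1+\lambda_i^2)^{-1}$, as in Collins--Jacob--Yau's $C^2$ estimate, which uses $\log(1+\lambda_1^2)$-type quantities and needs no gradient term.

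\textbf{Computation at the maximum.} At a maximum point $x_0\in M\setminus Z$ with $\lambda_1\ge C_0$, differentiating $F_t(\eta_t)=0$ twice and using concavity on $\Gamma_{\omega,\theta,\Theta}$ gives
\[
\mathcal L_t\log\lambda_1\ge -C\sum_iF_i-\frac{F_1|\nabla_1\eta_{1\bar1}|^2}{\lambda_1^2}+(\text{good third-order terms}).
\]
The constant $C$ comes from the curvature of $\omega$ and the uniformly bounded $\nabla\beta_t,\nabla^2\beta_t$. The bad third-order term is absorbed using the CJY trick. Since $F_1\lambda_1^2\le C$, the term $F_1|\nabla_1\eta_{1\bar1}|^2/\lambda_1^2$ is compared with the concavity terms $\sum_{k}(F_1-F_k)/(\lambda_k-\lambda_1)\,|\nabla_1\eta_{k\bar1}|^2$, together with the first-order condition $\nabla\log\lambda_1=A\nabla v_t$. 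Lemma~\ref{lem:separation} gives $-A\mathcal L_tv_t\ge Ac_0$. Combined with $\sum_iF_i\le C$ (from \eqref{eq:Fi}), choosing $A$ large yields a contradiction unless $\lambda_1(x_0)\le C_0$. Hence $\log\lambda_1-Av_t\le \log C_0+AC$ everywhere on $M\setminus Z$. This gives $\lambda_1\le Ce^{Av_t}=Ce^{A(u_t-\rho)}\le Ce^{-A\rho}$, which is bounded on each $K\Subset M\setminus Z$.

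\textbf{Main obstacle.} The hard part is the third-order term when $\lambda_1$ is large but some eigenvalues are negative or close to $\cot\theta$. The uniform ellipticity constants \eqref{eq:Fi} degenerate like $(1+\lambda_i^2)^{-1}$, and the twist $c_t\to0$ changes the equation. If the pure $\log\lambda_1$ argument fails, I would replace $\log\lambda_1$ by CJY's quantity $\log(1+\lambda_1^2)$. If needed, I would also add a gradient term $\varphi(|\nabla u_t|^2)$ weighted by $e^{-Bv_t}$ to absorb the remaining term. Throughout, all constants must be checked to be independent of $t$, using \eqref{eq:small-twist} and the $t$-uniform structure constants of $F_t$ from \cite[Lemma~5.6]{Chen}.
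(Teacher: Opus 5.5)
Your outer argument matches the paper's endgame: you maximize $\log(\cdot)-Av_t$, which attains its maximum off $Z$ because $v_t\to+\infty$ there, use Lemma~\ref{lem:separation} to get $-A\mathcal L_tv_t\ge Ac_0$ when $\lambda_1\ge C_0$, and conclude $\lambda_1\le Ce^{-A\rho}$. The gap is the step you delegate to ``the CJY trick'', namely $\mathcal L_t\log\lambda_1\ge -C$ without a gradient term. As stated, that step fails.

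\textbf{Why the unshifted quantity fails.} The term you must absorb is $\sum_kF_k|\nabla_k\eta_{1\bar1}|^2/\lambda_1^2$, summed over all $k$. The dangerous indices are not $k=1$ but those with $\lambda_k\le0$, where $F_k$ is of order one. Using $\nabla_1\eta_{k\bar1}=\nabla_k\eta_{1\bar1}$, two good terms involve $|\nabla_k\eta_{1\bar1}|^2$, both divided by $\lambda_1$: the concavity term with coefficient $\frac{F_k-F_1}{\lambda_1-\lambda_k}$, and the eigenvalue second-derivative term with coefficient $\frac{F_1}{\lambda_1-\lambda_k}$. The net coefficient is
\[
\frac{F_k}{\lambda_1(\lambda_1-\lambda_k)}-\frac{F_k}{\lambda_1^2}
=\frac{F_k\,\lambda_k}{\lambda_1^2(\lambda_1-\lambda_k)},
\]
which is negative when $\lambda_k<0$. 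In the supercritical phase $\lambda_n$ can be negative of order one, since only $\lambda_n>\cot\theta$ is known. Via the first-order condition $\nabla\lambda_1=A\lambda_1\nabla v_t$, the deficit is roughly $|\lambda_n|F_nA^2|\nabla_nv_t|^2/\lambda_1$. Absorbing it needs a bound on $|\nabla v_t|$, which you do not have: there is no uniform $C^1$ bound on $u_t$, and $|\nabla\rho|\to\infty$ along $Z$.

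\textbf{Why your fallbacks do not rescue it.} Adding a gradient term weighted by $e^{-Bv_t}$ only produces more $|\nabla v_t|$ terms. The gradient-term-plus-blow-up route of Hou--Ma--Wu, Sz\'ekelyhidi and Collins--Jacob--Yau is also unavailable near $Z$. Replacing $\log\lambda_1$ by $\log(1+\lambda_1^2)$ does not help either: the analogous net coefficient is negative as soon as $|\lambda_n|\gtrsim 2/\lambda_1$.

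\textbf{The missing idea.} Shift the eigenvalues by a constant $A\ge-\cot\theta$ so that all shifted eigenvalues are positive. With $\log(\lambda_1+A)$, the net coefficient becomes $F_k(\lambda_k+A)/\bigl((\lambda_1+A)^2(\lambda_1-\lambda_k)\bigr)\ge0$.

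The paper implements this with $W=\tr_\omega(\eta_t+A\omega)$. It writes the third-order term explicitly in \eqref{eq:third-order-identity} and proves the key inequality \eqref{eq:key-inequality}. The off-diagonal part is exactly where the shift enters: $a_{ij}/F_i\ge1/(\lambda_j-\cot\theta)\ge1/(\lambda_j+A)$. The diagonal part is controlled by the differentiated equation together with the supercritical inequality $1+\lambda_n\sum_{i<n}\lambda_i^{-1}\ge\sin\theta$. This gives $\mathcal L_t\log W\ge-C$ with no gradient term, and your maximum-principle argument then goes through verbatim.
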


\begin{proof}
For a large constant $A$, set
\[
W=\tr_\omega(\eta_t+A\omega)>1.
\]
We first prove
\[
\mathcal L_t\log W\geq-C,
\]
where $C$ is independent of $A$ and $t$. Choose normal holomorphic
coordinates in which
$w_{i\bar j}=\lambda_i\delta_{ij}$ at a point.
Differentiating the equation gives
\begin{equation}\label{eq:first-covariant}
 \sum_iF_iw_{i\bar i,k}=0.
\end{equation}
A second differentiation, followed by commuting derivatives, gives
\begin{equation}\label{eq:LW}
 \mathcal L_tW\geq
 -\sum_{i,j,k}(\nabla_{\bar k}F_t^{i\bar j})w_{i\bar j,k}-CW.
\end{equation}
Here we used $d\eta_t=0$ and
$\sum_iF_i(1+|\lambda_i|)\leq C$ to control the curvature terms.

For each fixed $k$, differentiating $F_t$ and using
\eqref{eq:first-covariant} to cancel the terms arising from the
derivative of the denominator, we obtain
\begin{equation}\label{eq:third-order-pre}
 -\sum_{i,j}(\nabla_{\bar k}F_t^{i\bar j})w_{i\bar j,k}
 =\sum_{i,j}a_{ij}
 \bigl(|w_{i\bar j,k}|^2-w_{i\bar i,k}w_{j\bar j,\bar k}\bigr),
\end{equation}
where
\[
 a_{ij}=
 \frac{(\lambda_i+\lambda_j)\cos(\theta-q)
 +(\lambda_i\lambda_j-1)\sin(\theta-q)}
 {\sin\theta\sin q\,(1+\lambda_i^2)(1+\lambda_j^2)}.
\]
Moreover, \eqref{eq:first-covariant} is equivalently
\[
 \cos(\theta-q)\sum_i\frac{w_{i\bar i,k}}{1+\lambda_i^2}
 +\sin(\theta-q)\sum_i
 \frac{\lambda_iw_{i\bar i,k}}{1+\lambda_i^2}=0.
\]
Substituting this identity into the second term on the right-hand side
of \eqref{eq:third-order-pre}, we get
\begin{align}\label{eq:third-order-identity}
 -\sum_{i,j}(\nabla_{\bar k}F_t^{i\bar j})w_{i\bar j,k}
 &=\sum_{i,j}a_{ij}|w_{i\bar j,k}|^2\notag\\
 &\quad+
 \frac{\sin(\theta-q)}
 {\sin\theta\sin q\cos^2(\theta-q)}
 \left|
 \sum_i\frac{\lambda_iw_{i\bar i,k}}{1+\lambda_i^2}
 \right|^2.
\end{align}

We claim that, for $A$ sufficiently large and $t$ sufficiently small,
\begin{equation}\label{eq:key-inequality}
 \sum_{i,j}a_{ij}|w_{i\bar j,k}|^2
 \geq
 \sum_{i,j}\frac{F_i}{\lambda_j+A}|w_{i\bar j,k}|^2.
\end{equation}
For $i\ne j$, put $\theta_i=\arccot\lambda_i$. Since
\[
 0<\theta-q+\theta_i\leq\theta-\theta_j<\pi,
\]
and $\cot$ is decreasing on $(0,\pi)$, we have
\[
 \frac{a_{ij}}{F_i}
 =\frac{\lambda_j+\cot(\theta-q+\theta_i)}{1+\lambda_j^2}
 \geq
 \frac1{\lambda_j-\cot\theta}
 \geq
 \frac1{\lambda_j+A},
\]
provided $A$ is large enough.

It remains to consider the diagonal terms. Set
\[
 d_i=\frac{a_{ii}-F_i/(\lambda_i+A)}{F_i^2}.
\]
A direct computation gives
\begin{align*}
 d_i
 ={}&\frac{\sin\theta\sin q}
 {\bigl(\cos(\theta-q)+\lambda_i\sin(\theta-q)\bigr)^2}
 \\
 &\times\left[
 2\lambda_i\cos(\theta-q)
 +(\lambda_i^2-1)\sin(\theta-q)
 -\frac{(1+\lambda_i^2)
 \bigl(\cos(\theta-q)+\lambda_i\sin(\theta-q)\bigr)}
 {\lambda_i+A}
 \right].
\end{align*}
We have $\lambda_i>\cot\frac{\theta}{2}>0$ for $i<n$, while
$\cot\theta<\lambda_n\leq\cot\frac{\theta}{2n}$. Hence, by
\eqref{eq:small-twist},
\begin{align}\label{eq:diagonal-coefficients}
 d_i>0,\qquad
 d_i^{-1}
 &=\frac1{2\sin^2\theta\,\lambda_i}
 +O(A^{-1}+c_t),\qquad i<n,\notag\\
 d_n
 &=2\sin^2\theta\,\lambda_n+O(A^{-1}+c_t).
\end{align}
Indeed, for $i<n$,
\[
 d_i
 =\sin^2\theta
 \left(2\lambda_i-\frac{1+\lambda_i^2}{\lambda_i+A}\right)
 +O(c_t\lambda_i),
\]
and, for $A$ sufficiently large,
\[
 2\lambda_i-\frac{1+\lambda_i^2}{\lambda_i+A}\geq\lambda_i.
\]
Moreover, since $\lambda_i>\cot\frac{\theta}{2}$,
\[
 \frac{1}{
 \sin^2\theta\left(
 2\lambda_i-\frac{1+\lambda_i^2}{\lambda_i+A}
 \right)}
 -\frac1{2\sin^2\theta\,\lambda_i}
 =
 \frac{1+\lambda_i^2}
 {2\sin^2\theta\,\lambda_i
 (\lambda_i^2+2A\lambda_i-1)}
 =O(A^{-1}).
\]
uniformly for $i<n$. This proves the first line of
\eqref{eq:diagonal-coefficients}; the second follows directly from the
uniform bound for $\lambda_n$.

If $\lambda_n<0$, then
\[
 \sum_{i<n}\lambda_i^{-1}
 \leq\tan(\theta-\arccot\lambda_n),
\]
and thus
\[
 1+\lambda_n\sum_{i<n}\lambda_i^{-1}
 \geq
 \frac{\sin\theta(1+\lambda_n^2)}
 {\sin\theta+\lambda_n\cos\theta}
 \geq\sin\theta.
\]
If $\lambda_n\geq0$, the same lower bound is immediate. Thus
\eqref{eq:diagonal-coefficients} gives
\[
 1+d_n\sum_{i<n}d_i^{-1}
 =1+\lambda_n\sum_{i<n}\lambda_i^{-1}
 +O(A^{-1}+c_t)>0,
\]
after first fixing $A$ sufficiently large and then taking $t$
sufficiently small. By \eqref{eq:first-covariant} and the
Cauchy--Schwarz inquality,
\[
 \sum_i d_i|F_iw_{i\bar i,k}|^2
 \geq
 \left(d_n+\frac1{\sum_{i<n}d_i^{-1}}\right)
 |F_nw_{n\bar n,k}|^2
 \geq0.
\]
This proves \eqref{eq:key-inequality}.

Since $d\eta_t=0$, we have
$w_{i\bar i,k}=w_{k\bar i,i}$. Hence, by Cauchy--Schwarz,
\[
 \sum_kF_k|W_k|^2
 \leq
 W\sum_{i,k}\frac{F_k}{\lambda_i+A}|w_{k\bar i,i}|^2
 \leq
 W\sum_{i,j,k}\frac{F_i}{\lambda_j+A}|w_{i\bar j,k}|^2.
\]
Combining this with \eqref{eq:LW},
\eqref{eq:third-order-identity}, and \eqref{eq:key-inequality}, we get
\[
 \mathcal L_tW\geq\frac1W\sum_kF_k|W_k|^2-CW,
\]
and therefore
\[
 \mathcal L_t\log W
 =\frac{\mathcal L_tW}{W}
 -\frac1{W^2}\sum_kF_k|W_k|^2
 \geq-C.
\]

Since
$v_t\to+\infty$ along $Z$, the function
$\log W-Av_t$ tends to $-\infty$ along $Z$ and hence attains its
maximum at some $x_t\in M\setminus Z$. If
$\lambda_1(x_t)\geq C_0$, Lemma~\ref{lem:separation} gives
\[
 0\geq
 \mathcal L_t(\log W-Av_t)(x_t)
 \geq-C+Ac_0,
\]
which is impossible when $A$ is chosen sufficiently large. Thus
$\lambda_1(x_t)<C_0$, and hence
\[
 W(x_t)\leq n(C_0+A).
\]
Thus we have
\[
 W(x)
 \leq n(C_0+A)e^{A(v_t(x)-v_t(x_t))}
 \leq Ce^{-A\rho(x)},
\]
where we used $u_t\leq0$ and $v_t\geq-C$. Since $\rho$ is bounded
on $K$ and
\[
 \Delta_\omega u_t
 =W-nA-\tr_\omega\chi-nt,
\]
the desired estimate follows.
\end{proof}

The trace bound and $\eta_t>\cot\theta\,\omega$ bound all
eigenvalues on compact subsets of $M\setminus Z$.
By \eqref{eq:Fi}, the equation is uniformly elliptic there.
The  $C^0$ and Laplace bounds give local $W^{2,p}$ bounds
for every finite $p$, hence local gradient bounds.
Concavity and the complex Evans--Krylov estimate then give local
$C^{2,\alpha}$ bounds. Differentiating the equation and applying
the interior Schauder estimates yields
$\|u_t\|_{C^k(K,\omega)}\leq C_{k,K}$ for every $k\geq0$.

\subsection{Proof of Theorem~1.6}
\label{subsec:convergence}

\begin{proof}[Proof of Theorem~\ref{thm:regularity of the weak solution}]
Let $t_j>0$ with $t_j\to0$. Since $\eta_t>\cot\theta\,\omega$,
there is a constant $A>0$, independent of $t$, such that
$\ii\partial\bar\partial u_t\geq-A\omega$.
By the compactness theorem for quasi-plurisubharmonic functions
and the normalization $\sup_Mu_t=0$, after passing to a subsequence,
$u_{t_j}$ converges in $L^1(M)$ to a quasi-plurisubharmonic function
$u_\infty$. The uniform $C^0$ estimate gives
$\|u_\infty\|_{L^\infty(M)}\leq C$, and Hartogs' lemma gives
$\sup_Mu_\infty=0$.
By the local higher order estimates, after passing to a further
subsequence, we have
\[
u_{t_j}\longrightarrow u_\infty
\qquad\text{in }C^\infty_{\mathrm{loc}}(M\setminus Z).
\]

The $L^1$ convergence also gives
\begin{equation}\label{eq:S-infinity-positive}
\eta_\infty
:=\chi+\ii\partial\bar\partial u_\infty
\geq\cot\theta\,\omega
\end{equation}
in the sense of currents.
Set $S_\infty:=\eta_\infty-\cot\theta\,\omega$.
Then $S_\infty$ is a closed positive $(1,1)$-current with
locally bounded potentials. By Bedford--Taylor theory
\cite{BT1982}, the measures
$S_\infty^k\wedge\omega^{n-k}$, $0\leq k\leq n$,
are well defined and do not charge the proper analytic subset $Z$.
Using $\eta_\infty=S_\infty+\cot\theta\,\omega$,
we define $G_{n,\theta}(\eta_\infty,\omega)$ by expansion as a
real linear combination of these measures.

Since $c_{t_j}\to0$, the smooth convergence on $M\setminus Z$
and the equations
$G_{n,\theta}(\eta_{t_j},\omega)=c_{t_j}\omega^n$ give
\[
G_{n,\theta}(\eta_\infty,\omega)=0
\qquad\text{on }M\setminus Z.
\]
Moreover, the Bedford--Taylor measures appearing in the expansion of
$G_{n,\theta}(\eta_\infty,\omega)$ do not charge the analytic set $Z$,
the equation holds on $M$ in the Bedford--Taylor sense.
\end{proof}

\end{document}